\newcommand{\mathup}[1]{\text{\textup{#1}}}
\theoremstyle{plain}
\newtheorem{theorem}{Theorem}[section]
\newtheorem{proposition}[theorem]{Proposition}
\newtheorem{lemma}[theorem]{Lemma}
\theoremstyle{definition}
\newtheorem{assumption}[theorem]{Assumption}
\theoremstyle{remark}
\newtheorem{remark}[theorem]{Remark}
\title{Overcoming Lower-Level Constraints in Bilevel Optimization: A Novel Approach with Regularized Gap Functions}
\author{%
	Wei Yao$^{1, 2}$ \quad Haian Yin$^{2}$ \quad Shangzhi Zeng$^{3, 1}$ \quad Jin Zhang$^{2, 1}$ \\
	$^{1}$National Center for Applied Mathematics Shenzhen \\ 
	$^{2}$Department of Mathematics, Southern University of Science and Technology\\ 
	$^{3}$Department of Mathematics and Statistics, University of Victoria\\ 
	\texttt{yaow@sustech.edu.cn} \quad \texttt{yinha@sustech.edu.cn} \\ \texttt{zengshangzhi@uvic.ca} \quad \texttt{zhangj9@sustech.edu.cn} 
}
\begin{document}

	\maketitle
\begin{abstract}

Constrained bilevel optimization tackles nested structures present in constrained learning tasks like constrained meta-learning, adversarial learning, and distributed bilevel optimization. 
However, existing bilevel optimization methods mostly are typically restricted to specific constraint settings, such as linear lower-level constraints. 
In this work, we overcome this limitation and develop a new single-loop, Hessian-free constrained bilevel algorithm capable of handling more general lower-level constraints. 
We achieve this by employing a doubly regularized gap function tailored to the constrained lower-level problem, transforming constrained bilevel optimization into an equivalent single-level optimization problem with a single smooth constraint. 
We rigorously establish the non-asymptotic convergence analysis of the proposed algorithm under the convexity of lower-level problem, avoiding the need for strong convexity assumptions on the lower-level objective or coupling convexity assumptions on lower-level constraints found in existing literature. 
Additionally, the generality of our method allows for its extension to bilevel optimization with minimax lower-level problem. 
We evaluate the effectiveness and efficiency of our algorithm on various synthetic problems, typical hyperparameter learning tasks, and generative adversarial network.

\end{abstract}

\section{Introduction}
\label{introduction}

Bilevel optimization (BiO), which subsumes minimax optimization as a special case, is a hierarchical optimization comprising two levels, with one problem nested within another. 
There is growing interest in BiO that is driven by an abundance of applications. Examples in machine learning (ML) include hyperparameter optimization~\cite{pedregosa2016hyperparameter,franceschi2018bilevel,mackay2018self}, meta-learning~\cite{franceschi2018bilevel,zugner2018adversarial,rajeswaran2019meta,KaiyiJi2020ConvergenceOM}, and 
reinforcement learning~\cite{kunapuli2008classification,hong2020two}.
Typically, the  lower-leve (LL) problems of BiO in ML literature handle learning tasks as unconstrained optimization problems. 
However, constraints are crucial for learning tasks and are becoming increasingly important in designing robust, fair, and safe ML systems \cite{silva2020opportunities,mehrabi2021survey,yang2023safety}. 
The resulting constrained bilevel problem applies to a wider range of applications than the unconstrained one, such as adversarial learning 
\cite{madry2018towards,wong2019fast,zhang2022revisiting}, 
federated learning \cite{fallah2020personalized,tarzanagh2022fednest,yang2023simfbo}, continual learning \cite{lopez2017gradient},
and meta-learning for few-shot learning \cite{xu2023efficient}.

Existing BiO methods mainly focus on the LL unconstrained case \cite{franceschi2018bilevel,ghadimi2018approximation,grazzi2020iteration,ji2020bilevel,chen2021closing,ji2022will,hong2020two,arbel2022amortized,dagreou2022framework,kwon2023fully,huang2023momentumbased}. Recently, a few approaches have emerged to tackle BiO problems with constraints. 
However, most of these approaches are limited to linear constraints or constraints that rely solely on the LL variable. In this study, we address more general constrained BiO problems, where the LL problems involve constraints coupling both upper-level (UL) and LL variables. Explicitly, we consider the constrained bilevel optimization problem of the following form: 
\begin{equation}\label{general_problem}
		\min_{x \in X, y \in Y} \, F(x,y)  
  \quad \mathrm{s.t.} \quad  y \in S(x),
\end{equation}
with $S(x)$ being the set of optimal solutions of the constrainted  lower-level problem,
\begin{equation}\label{LL_problem}
		 \min_{y \in Y} \, f(x,y) 
   \quad \mathrm{s.t.} \quad g(x,y) \leq 0,
\end{equation}
where $x \in \mathbb{R}^n$, $y \in \mathbb{R}^m$, $X$ and $Y$ are closed convex sets in $ \mathbb{R}^n$ and $ \mathbb{R}^m$, respectively. The UL objective $F:X\times Y\rightarrow\mathbb{R}$, the LL objective $f:X\times Y\rightarrow\mathbb{R}$, and the LL constraint mapping $g:X\times Y\rightarrow\mathbb{R}^p$ 
are continuously differentiable functions. 

The constrained BiO problem \eqref{general_problem} is challenging due to the implicit constraint $y\in S(x)$, which requires $y$ to be a solution of an optimization problem. The LL constraints introduce extra complexity, especially when dealing with coupled LL constraints. This complexity hinders the straightforward extension of existing methods to constrained BiO problems \cite{kwon2024on}. 

In the pursuit of solving constrained BiO problem, several algorithms have been developed to address specific cases of \eqref{general_problem}. 
For instance, the works \cite{tsaknakis2022implicit,khanduri23a} handle constrained BiO problems with $g(x,y)=A y-b$ and $f(x,y)$ being strong convexity with respect to (\textit{w.r.t.}) $y$. 
As for coupling constrained LL problem, the paper \cite{xiao2023alternating} introduces an alternating projected SGD approach for a family of BiO problems with coupling LL linear equality constraints $A y + h(x) -c=0$, where $A$ represents a matrix, $c$ denotes a vector, and $h(x)$ is a vector-valued function. Beyond the confines of a specific setting, the authors of \cite{xu2023efficient} develop a gradient-based approach for general BiO problems, where the LL problem is convex with equality and inequality constraints. Since the methods presented in these works predominantly utilize implicit gradient-based techniques, it is unexpected that they all presuppose the strong convexity of the LL objective \textit{w.r.t.} $y$, alongside other regularity conditions, to ensure the uniqueness and smoothness of the LL solution mapping. More importantly, the algorithms in these studies would tolerate the heavier memory and computational cost of using second-order calculations in large-scale applications.

Therefore, it is imperative to develop first-order methods that do not necessitate explicit estimation of the implicit gradients, thereby facilitating the resolution of a broader class of constrained BiO problems. 
In pursuit of this goal, the value function approach has garnered considerable attention due to its efficacy in designing first-order, single-loop numerical algorithms \cite{ye1995optimality,kwon2023fully,ShenC23}. However, challenges arise when the LL problem involves constraints. Specifically, the value function, $v(x) := \min_{y \in Y} \{f(x,y)\, | \, g(x,y) \le 0\}$, is usually an implicit nonsmooth function, even when the underlying problem functions possess favorable properties. Typically, it is difficult to compute or estimate its generalized gradient.

A notable distinctive approach for addressing general constrained BiO problems has recently been presented in the work \cite{yao2024constrained}, which introduces a novel proximal Lagrangian value function to tackle constrained LL problems. By utilizing this function, they convert the constrained BiO problem into an equivalent optimization problem with smooth constraints. Notably, their reformulation preserves the coupling LL constraints, akin to the value function approach. Consequently, the proposed algorithm (LV-HBA) in \cite{yao2024constrained} involves Euclidean projection onto the coupled LL constraint set. However, such an operation typically requires the assumption of coupling convexity and can be costly when the set is complex. 
Therefore, a natural question arises: 
{\it Can we develop a first-order algorithm to overcome possibly coupled lower-level constraints in bilevel optimization?}

Our response to this question is affirmative. Next, we highlight the main contributions of this study. Additional related work is provided in Appendix \ref{relatedwork}.

\begin{itemize}
\item {\bf Reducing constrained bilevel optimization into an equivalent optimization problem with only a single smooth inequality constraint.} 
We propose a novel single-level smoothed reformulation for constrained BiO with possibly coupled LL constraints. A key technique is the doubly regularized gap function defined in \eqref{def_vg}, which serves as an optimality metric for the LL problems and allows for straightforward Hessian-free gradient evaluation, as shown in \eqref{Ggradient}. Furthermore, this type of gap function can be readily extended to  tackle more complex bilevel optimization scenarios involving minimax lower-level problems. 

\item {\bf Developing a single-loop first-order algorithm without projection onto the coupled lower-level constraint set.} 
Building upon the newly introduced single-leve reformulation, we propose {\bf Bi}level {\bf C}onstrained {\bf GA}p {\bf F}unction-based {\bf F}irst-order {\bf A}lgorithm ({\bf BiC-GAFFA}), a first-order algorithm that can be implemented entirely within a single-loop framework. 
Furthermore, we rigorously establish the non-asymptotic convergence analysis of BiC-GAFFA under the convexity of LL problem, avoiding the necessity for either the strong convexity assumption on the LL objective or the full convexity assumption on the LL constraints. 

\item We validate the effectiveness and efficiency of BiC-GAFFA on various synthetic problems, typicial hyper-parameter learning tasks, and generative adversarial network. These experiments collectively substantiate the superior performance of BiC-GAFFA.
\end{itemize}

\section{Regularized gap function and equivalent reformulation}
\label{reformulation}

Transforming a BiO problem to a single-level optimization problem is a useful strategy from both theoretical and computational perspectives. 
In this section, we introduce a novel smoothed reformulation tailored for constrained BiO problems with potentially coupled LL constraints. For this purpose, we define the following doubly regularized gap function for the LL problem \eqref{LL_problem}:
\begin{equation}\label{def_vg}
	\mathcal{G}_{\gamma}(x,y,z):= \max_{\theta \in Y, \lambda \in\mathbb{R}^p_+} \Big\{
	\mathcal{L}(x,y,\lambda) - \frac{1}{2\gamma_2}\| \lambda - z\|^2 - \mathcal{L}(x,\theta,z)- \frac{1}{2\gamma_1}\| \theta - y\|^2 \Big\},
\end{equation}
where $z\in\mathbb{R}^p$, the Lagrangian function 
$\mathcal{L}(x,y,z) := f(x,y) + z^\mathup{T}g(x,y)$, and $\gamma:=(\gamma_1,\gamma_2)>0$ is the regularization (or proximal) parameters. 
This regularized gap function concept has been previously applied in various contexts, such as variational inequalities \cite{fukushima1992equivalent}, standard Nash games \cite{gurkan2009approximations}, and the generalized Nash equilibrium problem \cite{von2009optimization}. However, its application in studying BiO problems with constrained lower-level problem remains unexplored.

When the LL problem is convex, and the associated multipliers exist for any $ y \in S(x)$, that is,
$
	\mathcal{M}(x,y) := \big\{ \lambda \in\mathbb{R}^p_+ ~|~ 0\in \nabla_{y} f(x,y) + \lambda^\mathup{T} \nabla_{y}g(x,y) + \mathcal{N}_Y(y), ~\lambda^\mathup{T} g(x,y) = 0\big\} \neq \varnothing
$,
the single inequality constraint $\mathcal{G}_{\gamma}(x,y,z) \le 0$ can equivalently characterizes the solution set of LL problem.

\begin{lemma}\label{lem0}
	Assume that both $f(x,\cdot)$ and $g(x,\cdot)$ are convex for any $x\in X$. Let $\gamma_1, \gamma_2 > 0$, we have $\mathcal{G}_{\gamma}(x,y,z) \ge 0$  for any $(x,y,z) \in X \times Y \times \mathbb{R}_+^p$. Furthermore, $\mathcal{G}_{\gamma}(x,y,z) \le 0$
	if and only if $ y \in S(x)$ and $z \in \mathcal{M}(x,y)$. 	
\end{lemma}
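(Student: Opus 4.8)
The plan is to exploit the separable structure of the inner maximization. Since the bracketed objective in \eqref{def_vg} splits into a part depending only on $\lambda$ and a part depending only on $\theta$, I would write $\mathcal{G}_\gamma(x,y,z) = P(x,y,z) - Q(x,y,z)$, where
\[
P(x,y,z) := \max_{\lambda \in \mathbb{R}^p_+}\Big\{\mathcal{L}(x,y,\lambda) - \tfrac{1}{2\gamma_2}\|\lambda - z\|^2\Big\},\quad
Q(x,y,z) := \min_{\theta \in Y}\Big\{\mathcal{L}(x,\theta,z) + \tfrac{1}{2\gamma_1}\|\theta - y\|^2\Big\}.
\]
The nonnegativity claim then follows from a sandwiching observation: evaluating the $\lambda$-objective at the feasible point $\lambda = z$ (feasible because $z \in \mathbb{R}^p_+$) gives $P(x,y,z) \ge \mathcal{L}(x,y,z)$, while evaluating the $\theta$-objective at $\theta = y$ (feasible because $y \in Y$) gives $Q(x,y,z) \le \mathcal{L}(x,y,z)$. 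Subtracting yields $\mathcal{G}_\gamma(x,y,z) = P - Q \ge 0$.

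For the equivalence, I would note that because $\mathcal{G}_\gamma \ge 0$ always holds, the condition $\mathcal{G}_\gamma \le 0$ is the same as $\mathcal{G}_\gamma = 0$, and by the sandwich $P \ge \mathcal{L}(x,y,z) \ge Q$ this forces both $P = \mathcal{L}(x,y,z)$ and $Q = \mathcal{L}(x,y,z)$ simultaneously. The crux is then to translate each of these two equalities into a KKT relation. The $\lambda$-objective is strongly concave (thanks to the $-\tfrac{1}{2\gamma_2}\|\lambda - z\|^2$ term, and here no convexity of $g$ is needed since $y$ is frozen), so $P = \mathcal{L}(x,y,z)$ holds iff $\lambda = z$ is its unique maximizer over $\mathbb{R}^p_+$; writing the first-order optimality condition $g(x,y) \in \mathcal{N}_{\mathbb{R}^p_+}(z)$, equivalently $z = [z + \gamma_2 g(x,y)]_+$, and reading it componentwise recovers exactly primal feasibility $g(x,y) \le 0$ together with complementary slackness $z^\mathup{T} g(x,y) = 0$. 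Likewise, $\mathcal{L}(x,\cdot,z)$ is convex since $f(x,\cdot)$ and $g(x,\cdot)$ are convex and $z \ge 0$, so the $\theta$-objective is strongly convex and $Q = \mathcal{L}(x,y,z)$ holds iff $\theta = y$ minimizes it over $Y$; its first-order condition is precisely the stationarity $0 \in \nabla_y f(x,y) + z^\mathup{T}\nabla_y g(x,y) + \mathcal{N}_Y(y)$.

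Finally, I would assemble these into the stated characterization. The three extracted relations — primal feasibility, complementary slackness with $z \ge 0$, and stationarity — are exactly the KKT system of \eqref{LL_problem}; the last two are, by definition, the statement $z \in \mathcal{M}(x,y)$. To obtain $y \in S(x)$ I would invoke sufficiency of KKT under convexity: setting $\phi(\cdot) := \mathcal{L}(x,\cdot,z)$, stationarity says $y$ minimizes the convex $\phi$ over $Y$, so for any feasible $y'$ one has $f(x,y) = \phi(y) \le \phi(y') \le f(x,y')$ using $z^\mathup{T} g(x,y) = 0$ and $z^\mathup{T} g(x,y') \le 0$; hence $y \in S(x)$. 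The converse is immediate: $y \in S(x)$ supplies primal feasibility and $z \in \mathcal{M}(x,y)$ supplies the remaining two relations, so $P = Q = \mathcal{L}(x,y,z)$ and $\mathcal{G}_\gamma(x,y,z) = 0 \le 0$.

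The main obstacle I anticipate is the careful componentwise reading of the $\lambda$-subproblem's optimality condition to extract both $g(x,y) \le 0$ and $z^\mathup{T} g(x,y) = 0$ cleanly, together with pinning down precisely where the convexity hypotheses enter (the strong convexity of the $\theta$-subproblem and the KKT sufficiency step), so that the proof uses no more than what is assumed.
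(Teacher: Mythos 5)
Your proof is correct, and its first two thirds coincide exactly with the paper's: the same sandwich $Q(x,y,z) \le \mathcal{L}(x,y,z) \le P(x,y,z)$ obtained by testing $\theta = y$ and $\lambda = z$ gives nonnegativity, and $\mathcal{G}_{\gamma} = 0$ is then reduced to the statement that $y$ and $z$ are the optimizers of the two proximally regularized subproblems. Where you diverge is the final step. The paper uses convexity to strip off the proximal terms, observes that the resulting pair of conditions says $(y,z)$ is a saddle point of $\min_{\theta \in Y}\max_{\lambda \in \mathbb{R}^p_+} f(x,\theta) + \lambda^\mathup{T} g(x,\theta)$, and then cites the classical Lagrangian saddle-point/minimax characterization to conclude $y \in S(x)$ and $z \in \mathcal{M}(x,y)$. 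You instead write the first-order conditions of the two regularized subproblems (which, evaluated at $\theta = y$ and $\lambda = z$, coincide with those of the unregularized ones), read the $\lambda$-condition $g(x,y) \in \mathcal{N}_{\mathbb{R}^p_+}(z)$ componentwise to extract primal feasibility and complementary slackness, and close the loop with KKT sufficiency under convexity. The two routes are logically equivalent; yours is more elementary and self-contained, and it makes explicit exactly where $z \ge 0$, the convexity of $f(x,\cdot)$ and $g(x,\cdot)$, and the strong convexity/concavity of the regularized subproblems are used, while the paper's is shorter and foregrounds the saddle-point interpretation that it reuses verbatim for the minimax extension in Lemma \ref{lem2}. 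Your anticipated obstacle (the componentwise reading of the normal cone to $\mathbb{R}^p_+$) is handled correctly.
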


Another advantageous property of $\mathcal{G}_{\gamma}$ is its continuously differentiability when both functions $f$ and $g$ exhibit continuous differentiability.

\begin{lemma}\label{smooth_G}
 Assume that both $f(x,y)$ and $g(x,y)$ are convex in $y$ on $Y$ for any $x\in X$ and be continuously differentiable on an open set containing $X \times Y$. Then $\mathcal{G}_{\gamma}(x,y,z)$ is continuously differentiable on $X \times Y \times \mathbb{R}^p_+$, and for any $(x,y,z) \in X \times Y \times \mathbb{R}^p_+$,
 \begin{equation}\label{Ggradient}
		\nabla \mathcal{G}_{\gamma}(x,y,z) 
   =	\begin{pmatrix}
			\nabla_x f(x,y) + (\lambda^*)^\mathup{T} \nabla_x g(x,y)  \\
			\nabla_y f(x,y) + (\lambda^*)^\mathup{T} \nabla_y g(x,y)  \\
			-\left(z - \lambda^*\right)/\gamma_2
		\end{pmatrix}-
  \begin{pmatrix}
			\nabla_x f(x,\theta^*) + z^\mathup{T} \nabla_x g(x,\theta^*) \\
			 \left(y - \theta^*\right)/\gamma_1 \\
			 g(x, \theta^*)
		\end{pmatrix},
\end{equation}
where $\theta^*$ and $\lambda^*$ denote $\theta^*(x,y,z)$ and $\lambda^*(x,y,z)$, respectively, defined as
\begin{equation}\label{def_tl*}
	\begin{aligned}
		\theta^*(x,y,z) &:= \underset{\theta \in Y}{\mathrm{argmin}}  \left\{ f(x, \theta) + z^\mathup{T} g(x, \theta) + \frac{1}{2\gamma_1}\| \theta - y\|^2  \right\}, \\
		\lambda^*(x,y,z) &:= \underset{\lambda \in \mathbb{R}^p_+}{\mathrm{argmax}} \left\{  f(x, y) + \lambda^\mathup{T} g(x, y) - \frac{1}{2\gamma_2}\| \lambda - z\|^2 \right\} = \mathrm{Proj}_{\mathbb{R}^p_+}\left( z + \gamma_2 g(x,y)  \right).
	\end{aligned}
\end{equation}
\end{lemma}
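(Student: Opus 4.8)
The plan is to exploit the fact that the inner maximization defining $\mathcal{G}_{\gamma}$ separates completely into two independent subproblems, and then to invoke a Danskin-type envelope theorem. First I would write out $\mathcal{L}$ and observe that the bracketed objective in \eqref{def_vg} splits as the sum of a term depending only on $\lambda$, namely $\lambda^\mathup{T} g(x,y) - \frac{1}{2\gamma_2}\|\lambda - z\|^2$, a term depending only on $\theta$, namely $-f(x,\theta) - z^\mathup{T} g(x,\theta) - \frac{1}{2\gamma_1}\|\theta - y\|^2$, plus the constant $f(x,y)$. Hence the joint maximizer is $(\theta^*(x,y,z),\lambda^*(x,y,z))$, with $\theta^*$ and $\lambda^*$ exactly the separate solutions recorded in \eqref{def_tl*}; the closed form for $\lambda^*$ then follows by completing the square, which identifies the unconstrained optimum as $z + \gamma_2 g(x,y)$, and projecting onto $\mathbb{R}^p_+$.

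Next I would establish that these maximizers are unique and depend continuously on $(x,y,z)$. For $z \in \mathbb{R}^p_+$ the map $\theta \mapsto f(x,\theta) + z^\mathup{T} g(x,\theta)$ is convex, since $f(x,\cdot)$ and $g(x,\cdot)$ are convex and $z \ge 0$; adding the proximal term $\frac{1}{2\gamma_1}\|\theta - y\|^2$ makes the $\theta$-objective strongly convex, so $\theta^*$ is the unique minimizer and the objective is coercive, guaranteeing attainment over the possibly unbounded set $Y$. Symmetrically, the $\lambda$-objective is strongly concave, so $\lambda^*$ is unique. Continuity of $\lambda^*$ is immediate from the $1$-Lipschitz property of $\mathrm{Proj}_{\mathbb{R}^p_+}$ composed with the continuous map $(x,y,z) \mapsto z + \gamma_2 g(x,y)$, while continuity of $\theta^*$ follows from the standard stability of solutions of strongly convex parametric programs (via Berge's maximum theorem together with the uniqueness just established).

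With uniqueness and continuity of the maximizers in hand, I would apply Danskin's theorem to the $C^1$ joint objective $\Phi(x,y,z,\theta,\lambda)$ equal to the bracket in \eqref{def_vg}. Because the maximizer is unique, Danskin yields that $\mathcal{G}_{\gamma}(x,y,z) = \Phi(x,y,z,\theta^*,\lambda^*)$ is differentiable with $\nabla \mathcal{G}_{\gamma}$ equal to $\nabla_{(x,y,z)}\Phi$ evaluated at $(\theta^*,\lambda^*)$, the contributions from differentiating through $\theta^*$ and $\lambda^*$ cancelling by first-order optimality. Computing the three blocks of $\nabla_{(x,y,z)}\Phi$ — differentiating $f(x,y)+\lambda^\mathup{T} g(x,y)$ and $f(x,\theta)+z^\mathup{T} g(x,\theta)$ in $x$, collecting the $y$-dependent terms including $-\frac{1}{2\gamma_1}\|\theta - y\|^2$, and collecting the $z$-dependent terms including $-\frac{1}{2\gamma_2}\|\lambda - z\|^2$ and $-z^\mathup{T} g(x,\theta)$ — reproduces precisely the three pairs of rows in \eqref{Ggradient}. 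Continuous differentiability then follows because every entry of the gradient is a continuous composition of the continuous data $f,g,\nabla f,\nabla g$ with the continuous maps $\theta^*$ and $\lambda^*$.

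The main obstacle I anticipate is the rigorous justification of the envelope step over the \emph{unbounded} feasible sets $Y$ and $\mathbb{R}^p_+$, since the classical form of Danskin's theorem assumes a compact index set. To handle this I would localize: for $(x,y,z)$ ranging over a fixed neighborhood, the coercivity supplied by the quadratic proximal terms keeps the maximizers $(\theta^*,\lambda^*)$ confined to a bounded set, so the maximization may be restricted to a compact set without changing its value near that point, reducing the situation to the compact case (alternatively, one invokes a version of the envelope theorem valid directly under strong concavity and coercivity). Once this localization is secured, the differentiation of $\Phi$ and the matching with \eqref{Ggradient} are routine.
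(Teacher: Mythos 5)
Your proposal is correct and follows essentially the same route as the paper: the paper likewise splits $\mathcal{G}_{\gamma}$ into the separate $\lambda$-maximization and $\theta$-minimization, uses strong convexity/concavity from the proximal terms to get unique, continuously varying optimizers, and invokes a Danskin-type envelope theorem (Theorem 4.13 and Remark 4.14 of Bonnans--Shapiro, whose inf-compactness hypothesis is exactly what your coercivity-based localization verifies) to obtain the gradient formula. No gaps.
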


Now we derive a smooth single-level reformulation for the constrained BiO problem \eqref{general_problem}:
\begin{equation}\label{wVP}
	\min_{(x,y,z)\in X \times Y \times \mathbb{R}_+^p}  F(x,y) \quad
	\mathrm{s.t.} \quad 	
	\mathcal{G}_{\gamma}(x,y,z) \leq 0.
\end{equation}
This reformulation problem \eqref{wVP} is equivalent to the original BiO problem \eqref{general_problem}.
	
	\begin{theorem}\label{thm_reformulate}
		Assume that both $f(x,\cdot)$ and $g(x,\cdot)$ are convex for any $x\in X$. Let $\gamma_1, \gamma_2 > 0$, the reformulation \eqref{wVP} is equivalent to the bilevel optimization problem \eqref{general_problem}, provided that for any feasible point $(x,y)$ of \eqref{general_problem}, a corresponding multiplier of the lower-level problem \eqref{LL_problem} exists at $(x,y)$, i.e., $\mathcal{M}(x,y) \neq \varnothing$.
	\end{theorem}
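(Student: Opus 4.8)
The plan is to establish the equivalence at the level of feasible sets first, and then transfer it to optimal values and global minimizers, using Lemma~\ref{lem0} as the workhorse. Since the objective $F$ depends only on $(x,y)$ and not on the auxiliary variable $z$, it suffices to show that the projection onto the $(x,y)$-coordinates of the feasible set of \eqref{wVP} coincides with the feasible set $\{(x,y)\in X\times Y : y\in S(x)\}$ of \eqref{general_problem}, after which the matching of objective values is immediate.

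Concretely, I would denote by $\Omega := \{(x,y,z)\in X\times Y\times\mathbb{R}_+^p : \mathcal{G}_{\gamma}(x,y,z)\le 0\}$ the feasible set of the reformulation. By the first part of Lemma~\ref{lem0}, $\mathcal{G}_{\gamma}\ge 0$ everywhere on $X\times Y\times\mathbb{R}_+^p$, so the inequality $\mathcal{G}_{\gamma}(x,y,z)\le 0$ forces $\mathcal{G}_{\gamma}(x,y,z)=0$ on $\Omega$. The ``if and only if'' part of Lemma~\ref{lem0} then characterizes $\Omega$ exactly as $\{(x,y,z) : y\in S(x),\ z\in\mathcal{M}(x,y)\}$, which converts the single smooth constraint into the implicit bilevel constraint together with a multiplier condition.

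Next I would verify the two inclusions for the projected feasible set. For the forward direction, any $(x,y,z)\in\Omega$ has $y\in S(x)$, so $(x,y)$ is feasible for \eqref{general_problem} with identical value $F(x,y)$. For the reverse direction, given any feasible $(x,y)$ for \eqref{general_problem}, i.e.\ $y\in S(x)$, I invoke the standing hypothesis $\mathcal{M}(x,y)\neq\varnothing$ to select some $z\in\mathcal{M}(x,y)$; the characterization of $\Omega$ then yields $(x,y,z)\in\Omega$, again with the same objective value. Hence the two problems share the same optimal value, and $(x^*,y^*)$ solves \eqref{general_problem} if and only if $(x^*,y^*,z^*)$ solves \eqref{wVP} for some $z^*\in\mathcal{M}(x^*,y^*)$.

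The step I expect to be the crux is the reverse inclusion, and specifically the precise role of the assumption $\mathcal{M}(x,y)\neq\varnothing$: this is exactly what guarantees that every bilevel-feasible pair can be \emph{lifted} to a reformulation-feasible triple, so that no minimizer of \eqref{general_problem} is lost when passing to \eqref{wVP}. Everything else is a direct translation through Lemma~\ref{lem0}, owing to the independence of $F$ from $z$; the only care needed is to phrase ``equivalent'' rigorously as equality of optimal values together with the stated correspondence of global minimizers.
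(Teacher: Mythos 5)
Your proposal is correct and follows essentially the same route as the paper's proof: both directions are handled by translating feasibility through Lemma~\ref{lem0}, with the forward inclusion giving $y\in S(x)$ from $\mathcal{G}_{\gamma}(x,y,z)\le 0$ and the reverse inclusion using the standing assumption $\mathcal{M}(x,y)\neq\varnothing$ to lift a bilevel-feasible pair to a reformulation-feasible triple. The only difference is that you spell out explicitly the transfer from feasible-set correspondence to equality of optimal values and minimizers (via the independence of $F$ from $z$), which the paper leaves implicit.
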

\begin{remark}
    The equivalent reformulation \eqref{wVP} possesses two noteworthy characteristics. First, the formulation includes only one inequality constraint. Second, the LL constraints $g(x,y)\leq0$ are not explicitly stated in the reformulation \eqref{wVP}, in contrast to the value function-based reformulation as well as the formulation presented in \cite{yao2024constrained}. 
    Note also that the proofs of Lemmas and Theorem presented in this section are available in Appendix \ref{proofs1}.
\end{remark}

\section{Gap function-based first-order algorithm}
\label{algorithm}

Building upon the newly introduced reformulation, we proceed to the algorithm design phase. 

First, to enhance the stability of the proposed numerical algorithms, we introduce an upper bound constraint on the variable $z$. Consequently, we consider the following truncated variant of the reformulation \eqref{wVP}:
	\begin{equation}\label{wVP2}
			\min_{(x, y, z)\in X \times Y \times Z}  \  F(x,y) \quad
			\mathrm{s.t.} \quad  \mathcal{G}_{\gamma}(x,y,z) \leq 0, 
	\end{equation}
	where $Z := [0, r]^p$ with $r \ge 0$ is a compact subset of $\mathbb{R}^p_+$.
	It is demonstrated that for sufficiently large $r$, any optimal solution to \eqref{wVP2} is also optimal to the original reformulation \eqref{wVP}.
	 \begin{proposition}\label{thm_reformulate2}
		Suppose $\gamma_1, \gamma_2 > 0$ and that an optimal solution $(x^*, y^*, z^*)$ to \eqref{wVP} exists with $z^* \in Z$,
		then any optimal solution of \eqref{wVP2} is also optimal for the reformulation \eqref{wVP}.
	\end{proposition}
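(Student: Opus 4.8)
The plan is to establish that problems \eqref{wVP} and \eqref{wVP2} share the same optimal value, and then to show that any minimizer of the restricted problem \eqref{wVP2} simultaneously attains that common value and remains feasible for \eqref{wVP}, hence is optimal for it. The key structural observation is that the only difference between the two formulations is the domain of the variable $z$: it ranges over $\mathbb{R}_+^p$ in \eqref{wVP} and over the compact box $Z = [0,r]^p$ in \eqref{wVP2}. The objective $F(x,y)$ and the single constraint $\mathcal{G}_{\gamma}(x,y,z)\le 0$ are identical in both, and since $Z \subseteq \mathbb{R}_+^p$, the feasible set of \eqref{wVP2} is contained in that of \eqref{wVP}.

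First I would denote by $v$ and $v_2$ the optimal values of \eqref{wVP} and \eqref{wVP2}, respectively. Because every feasible point of \eqref{wVP2} is also feasible for \eqref{wVP} and the objectives coincide, minimizing over the smaller feasible set cannot lower the optimal value, which gives $v_2 \ge v$. Next I would invoke the hypothesis: the assumed optimal solution $(x^*,y^*,z^*)$ of \eqref{wVP} satisfies $x^* \in X$, $y^* \in Y$, $\mathcal{G}_{\gamma}(x^*,y^*,z^*) \le 0$ and, crucially, $z^* \in Z$. Hence $(x^*,y^*,z^*)$ is feasible for \eqref{wVP2}, so $v_2 \le F(x^*,y^*) = v$. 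Combining the two inequalities yields $v_2 = v$.

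Finally, let $(\bar x,\bar y,\bar z)$ be any optimal solution of \eqref{wVP2}. Then $F(\bar x,\bar y) = v_2 = v$, and since $Z \subseteq \mathbb{R}_+^p$ the point $(\bar x,\bar y,\bar z)$ is feasible for \eqref{wVP}. A feasible point of \eqref{wVP} attaining its optimal value is, by definition, an optimal solution of \eqref{wVP}, which completes the argument.

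I do not anticipate a genuine obstacle here: the statement is a routine "restriction of the feasible set preserves optimality" argument, and the entire proof reduces to the two value inequalities above together with the feasibility bookkeeping. The only point requiring care is to recognize that the hypothesis $z^* \in Z$ is precisely what prevents the truncation from strictly increasing the optimal value; without it one could only guarantee $v_2 \ge v$, and the minimizers of \eqref{wVP2} would not necessarily be optimal for \eqref{wVP}.
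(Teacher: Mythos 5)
Your proposal is correct and follows essentially the same argument as the paper's proof: feasible-set inclusion gives one inequality between the optimal values, feasibility of $(x^*,y^*,z^*)$ for \eqref{wVP2} gives the other, and the conclusion follows since any optimizer of \eqref{wVP2} is feasible for \eqref{wVP} and attains the common optimal value. No gaps.
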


Second, to develop a gradient-based algorithm for solving \eqref{wVP2}, we explore its penalty formulation:
\begin{equation}\label{def_varphi-0}
	\min_{ (x,y,z) \in X \times Y \times Z}  
 F(x,y) + c \, \mathcal{G}_{\gamma}(x,y,z),
\end{equation}
where $c > 0$ is a penalty parameter.
This work focuses on first-order penalty methods for the constrained BiO problem \eqref{general_problem}. To this end, we propose algorithms to find approximate stationary solutions of \eqref{def_varphi-0}, in alignment with several previous works \cite{liu2021value,liu2023value,ShenC23,kwon2024on}. 

Third, we study the relationship between the penalty formulation \eqref{def_varphi-0} and a relaxed problem of \eqref{wVP2}:
\begin{equation}\label{wVP2_relaxed}
	\min_{(x, y, z)\in X \times Y \times Z}  \  F(x,y) \quad
	\mathrm{s.t.} \quad  \mathcal{G}_{\gamma}(x,y,z) \leq \varepsilon,
\end{equation}
where $\varepsilon > 0$ is the relaxation parameter. The relation between the penalized and relaxed problems of BiO problems has been previously studied in \cite{ShenC23} across various single-level reformulations. Herein, we discuss the relationship between  \eqref{def_varphi-0} and \eqref{wVP2_relaxed}.
\begin{proposition}\label{lem1}
Assume that $F(x,y)$ is bounded below by $\underline{F}$ on $X \times Y$. For any $\varepsilon > 0$, there exists $\bar{c} > 0$ such that any global solution $(x_c, y_c, z_c)$ to the penalty formulation \eqref{def_varphi-0} with penalty parameter $c \ge \bar{c}$ is also a global solution to the relaxed problem \eqref{wVP2_relaxed} with some relaxation parameter $\varepsilon_c$ satisfying $ \varepsilon_c \le \varepsilon$. Moreover, if $(x_c, y_c, z_c)$ is a local solution to the penalty formulation \eqref{def_varphi-0}, then it is also a local solution to the relaxed problem \eqref{wVP2_relaxed} with relaxation parameter $\varepsilon_c:= \mathcal{G}_{\gamma}(x_c, y_c, z_c)$.
\end{proposition}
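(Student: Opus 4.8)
The plan is to reduce both claims to a single elementary comparison inequality that exploits penalty optimality together with the nonnegativity of $\mathcal{G}_\gamma$ from Lemma \ref{lem0}. Throughout I would write $\varepsilon_c := \mathcal{G}_\gamma(x_c,y_c,z_c)$, which is well-defined and satisfies $\varepsilon_c \ge 0$ because $(x_c,y_c,z_c) \in X \times Y \times Z \subseteq X \times Y \times \mathbb{R}_+^p$.

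First I would prove the ``solution-transfer'' step, which in fact covers the global and the local assertions simultaneously. Suppose $(x,y,z) \in X \times Y \times Z$ is any competitor feasible for the relaxed problem \eqref{wVP2_relaxed} at level $\varepsilon_c$, i.e.\ $\mathcal{G}_\gamma(x,y,z) \le \varepsilon_c$. Penalty optimality of $(x_c,y_c,z_c)$ for \eqref{def_varphi-0} gives $F(x_c,y_c) + c\varepsilon_c \le F(x,y) + c\,\mathcal{G}_\gamma(x,y,z) \le F(x,y) + c\varepsilon_c$; cancelling $c\varepsilon_c$ yields $F(x_c,y_c) \le F(x,y)$. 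If $(x_c,y_c,z_c)$ is a global minimizer of \eqref{def_varphi-0} this holds for all feasible $(x,y,z)$, so it is a global solution of \eqref{wVP2_relaxed} with parameter $\varepsilon_c$; if it is only a local minimizer, the same chain holds for all $(x,y,z)$ in the defining neighborhood, establishing that it is a local solution of \eqref{wVP2_relaxed} with $\varepsilon_c = \mathcal{G}_\gamma(x_c,y_c,z_c)$. This already settles the last sentence of the proposition.

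It remains to control the size of $\varepsilon_c$ in the global case. Here I would bring in a reference point $(\hat x,\hat y,\hat z) \in X \times Y \times Z$ with $\mathcal{G}_\gamma(\hat x,\hat y,\hat z)=0$; such a point exists precisely because the reformulation \eqref{wVP} is feasible, since by Theorem \ref{thm_reformulate} and the zero-gap characterization of Lemma \ref{lem0} a solution $(x^*,y^*,z^*)$ of \eqref{wVP} with $z^*\in Z$ has zero gap. Comparing $(x_c,y_c,z_c)$ against this point via penalty optimality and using $F(x_c,y_c) \ge \underline{F}$ gives $\underline{F} + c\varepsilon_c \le F(\hat x,\hat y) + 0$, hence $\varepsilon_c \le (F(\hat x,\hat y) - \underline{F})/c$. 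Choosing $\bar c := (F(\hat x,\hat y) - \underline{F})/\varepsilon$ (any positive number if the numerator vanishes) then forces $\varepsilon_c \le \varepsilon$ for every $c \ge \bar c$, completing the global claim.

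The two comparison inequalities are routine; the only genuinely delicate point is the third step, namely securing a usable reference point of zero (or at least $o(1/c)$-small) gap inside the truncated box $X \times Y \times Z$. This is where feasibility of \eqref{wVP} with $z^*\in Z$ is essential: were $\inf_{X\times Y\times Z}\mathcal{G}_\gamma$ strictly positive, $\varepsilon_c$ would be bounded away from $0$ and the conclusion could fail. As a supporting observation I would also record that $\varepsilon_c$ is monotonically nonincreasing in $c$: adding the penalty-optimality inequalities for parameters $c_1 < c_2$ and rearranging gives $(c_1-c_2)(\varepsilon_{c_1}-\varepsilon_{c_2}) \le 0$, so $\varepsilon_{c_1} \ge \varepsilon_{c_2}$. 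This makes the thresholding by $\bar c$ consistent, though it is not strictly needed for the statement.
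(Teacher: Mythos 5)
Your proposal is correct and follows essentially the same route as the paper's proof: both bound $\varepsilon_c = \mathcal{G}_{\gamma}(x_c,y_c,z_c)$ by $(F(\bar{x},\bar{y})-\underline{F})/c$ via comparison with a zero-gap feasible point of \eqref{wVP2}, and both transfer optimality to the relaxed problem \eqref{wVP2_relaxed} by the same penalty-comparison inequality (you state it directly where the paper phrases it as a contradiction, a purely cosmetic difference). Your explicit remark on where the zero-gap reference point comes from, and the monotonicity of $\varepsilon_c$ in $c$, are harmless additions.
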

If we consider the penalty formulation \eqref{def_varphi-0} with an increasing sequence of penalty parameter $c_k$ such that $c_k \rightarrow \infty$, any accumulation point of the sequence of solutions associated penalty formulation problem (\ref{problem_pen}) with varying values of $c_k$ is a solution to the problem \eqref{wVP2}. 
\begin{theorem}\label{thm1}
	Assume that $X$ and $Y$ are closed, and functions $F$, $f$ and $g$ are continuous on $X \times Y$. Suppose that $c_k \rightarrow \infty$  and let
	\[
	(x_k,y_k,z_k) \in \underset{(x,y)\in X \times Y \times Z}{\mathrm{argmin}} F(x,y) + c_k \, \mathcal{G}_{\gamma}(x,y,z).
	\]
	Then, any accumulation point $(\bar{x}, \bar{y}, \bar{z})$ of the sequence $\{(x_k, y_k, z_k)\}$ is a solution to problem \eqref{wVP2}.
\end{theorem}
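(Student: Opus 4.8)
The statement is the classical convergence theorem for an exterior penalty scheme, and the plan is to follow the standard two-step template: first show that every accumulation point is \emph{feasible} for \eqref{wVP2}, then show it is \emph{optimal}. The whole argument rests on two structural facts already available: the nonnegativity $\mathcal{G}_\gamma \ge 0$ on $X\times Y\times Z$ from Lemma \ref{lem0}, and the continuity of $\mathcal{G}_\gamma$ (which, under the convexity of $f(x,\cdot)$ and $g(x,\cdot)$, is even guaranteed by the continuous differentiability in Lemma \ref{smooth_G}). I fix a convergent subsequence $(x_{k_j},y_{k_j},z_{k_j}) \to (\bar x,\bar y,\bar z)$, with $(\bar x,\bar y,\bar z)\in X\times Y\times Z$ since that set is closed (closedness of $X$ and $Y$ and compactness of $Z$).

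First I would extract the basic penalty inequality. Fix any feasible point $(\hat x,\hat y,\hat z)$ of \eqref{wVP2}, so $\mathcal{G}_\gamma(\hat x,\hat y,\hat z)\le 0$; by the minimizing property of $(x_k,y_k,z_k)$ for \eqref{def_varphi-0} with parameter $c_k$,
\[
F(x_k,y_k) + c_k\,\mathcal{G}_\gamma(x_k,y_k,z_k) \;\le\; F(\hat x,\hat y) + c_k\,\mathcal{G}_\gamma(\hat x,\hat y,\hat z) \;\le\; F(\hat x,\hat y).
\]
Because $\mathcal{G}_\gamma \ge 0$, dropping the nonnegative penalty term on the left gives $F(x_k,y_k)\le F(\hat x,\hat y)$, and rearranging the same inequality gives $c_k\,\mathcal{G}_\gamma(x_k,y_k,z_k) \le F(\hat x,\hat y)-F(x_k,y_k)$.

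Feasibility then follows from a blow-up argument. Along the subsequence, continuity of $F$ makes $F(x_{k_j},y_{k_j})\to F(\bar x,\bar y)$, so the right-hand side above is bounded; since $c_{k_j}\to\infty$ and $\mathcal{G}_\gamma \ge 0$, we must have $\mathcal{G}_\gamma(x_{k_j},y_{k_j},z_{k_j})\to 0$. Passing to the limit and using continuity of $\mathcal{G}_\gamma$ yields $\mathcal{G}_\gamma(\bar x,\bar y,\bar z)=0\le 0$, i.e. $(\bar x,\bar y,\bar z)$ is feasible for \eqref{wVP2}. For optimality, I would replace $(\hat x,\hat y,\hat z)$ by an \emph{arbitrary} feasible point $(x',y',z')$ in the display above; the same dropping of the penalty term gives $F(x_{k_j},y_{k_j})\le F(x',y')$ for every $j$, and letting $j\to\infty$ with $F$ continuous produces $F(\bar x,\bar y)\le F(x',y')$. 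Combined with feasibility, this exhibits $(\bar x,\bar y,\bar z)$ as a global minimizer of \eqref{wVP2}.

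The main obstacle is not the inequality bookkeeping but justifying the two limiting operations, namely the continuity of $\mathcal{G}_\gamma$ and the availability of a feasible reference point. Continuity of $\mathcal{G}_\gamma$ is the delicate ingredient: it is the value of a parametric optimization problem, whose $\lambda$-part has the explicit continuous solution $\mathrm{Proj}_{\mathbb{R}^p_+}(z+\gamma_2 g(x,y))$ recorded in \eqref{def_tl*}, while the $\theta$-part is a parametric minimization over the possibly unbounded set $Y$; here the strongly convex proximal term $\tfrac{1}{2\gamma_1}\|\theta-y\|^2$ supplies the coercivity needed to localize minimizers on a compact set and invoke a Berge-type stability result, and under the standing convexity this is exactly the continuous differentiability asserted in Lemma \ref{smooth_G}. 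Existence of a feasible point, needed only to anchor the comparison, follows from the equivalence of \eqref{wVP2} with the original problem via Theorem \ref{thm_reformulate} and Proposition \ref{thm_reformulate2} whenever the bilevel problem itself admits a feasible pair; if the feasible set were empty the optimality claim would be vacuous and only the feasibility conclusion would remain.
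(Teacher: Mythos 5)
Your argument is correct and is essentially the paper's own proof written out in full: the paper simply notes that $\mathcal{G}_{\gamma}$ is lower semicontinuous and then delegates to the standard exterior-penalty limit argument (Theorem A.4 of \cite{liu2024moreau}), which is exactly the two-step feasibility/optimality template you execute. The only difference is that you invoke full continuity of $\mathcal{G}_{\gamma}$ via Lemma \ref{smooth_G} (which drags in the convexity hypothesis not listed in the theorem), whereas the paper only needs the inequality $\mathcal{G}_{\gamma}(\bar x,\bar y,\bar z)\le\liminf_j\mathcal{G}_{\gamma}(x_{k_j},y_{k_j},z_{k_j})$, i.e.\ lower semicontinuity, which holds under the stated hypotheses alone since $\mathcal{G}_{\gamma}$ is the difference of a continuous max (closed-form projection) and an upper semicontinuous infimum.
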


The proofs of Lemmas and Theorem presented in this section are available in Appendix \ref{proofs2}.

\subsection{The proposed algorithm}

Now we introduce a first-order gradient-based, single-loop algorithm to solve the truncated and penalized approximation problem \eqref{def_varphi-0} with a potentially varying penalty parameter $c_k$. Note that solving \eqref{def_varphi-0} still requires care since it involves an optimal value function $\mathcal{G}_{\gamma}(x,y,z)$ for another optimization problem.

Lemma \ref{smooth_G} establishes that $\mathcal{G}_{\gamma}$ is continuously differentiable, implying that the objective function of \eqref{def_varphi-0} is also continuously differentiable. Consequently, we can apply a gradient descent-type method to solve \eqref{def_varphi-0}. However, as also noted in Lemma \ref{smooth_G}, computing the gradient $\nabla \mathcal{G}_{\gamma}(x^k,y^k, z^k)$ at the current iterate $(x^k,y^k, z^k)$ requires solving the minimization problem in \eqref{def_tl*} to obtain the exact solution $\theta^*(x^k,y^k, z^k)$, a process which can be computationally intensive. 

To mitigate this computational challenge, we introduce an auxiliary sequence $\{ \theta^k\}$ as an approximation to $\theta^*$. At each iteration, we employ a single projected gradient descent step to update $\theta^{k+1}$ to approximate $\theta^*(x^k,y^k, z^k)$, as follows:
\begin{equation}\label{update_theta}
	\theta^{k+1} = \mathrm{Proj}_{Y}\left( \theta^k - \eta_k d_{\theta}^k  \right), 
\end{equation}
where $\eta_k >0$ is the step size, and
\begin{equation}\label{def_d_t}
	d_{\theta}^k :=  \nabla_y f(x^k, \theta^k) + (z^k)^\mathup{T} \nabla_y g(x^k, \theta^k) + \frac{1}{\gamma_1} ( \theta^k - y^k ).
\end{equation}
Furthermore, we introduce iterate $\lambda^{k+1}$ to represent $\lambda^*(x^k,y^k, z^k)$ as follows:
\begin{equation}\label{update_lambda}
	\lambda^{k+1} = \lambda^*(x^k,y^k, z^k) =  \mathrm{Proj}_{\mathbb{R}^p_+}\left( z^k + \gamma_2 g(x^k,y^k)  \right).
\end{equation}
By substituting $(\theta^{k+1}, \lambda^{k+1})$ for $(\theta^*, \lambda^*)$ in \eqref{Ggradient}, we can approximate the gradients of the objective function in 
\eqref{def_varphi-0} to define the update directions:
\begin{equation}\label{def_d}
	\begin{aligned}
		d_{x}^k := & \frac{1}{c_k} \nabla_x F(x^k,y^k) \\ &+ \nabla_x f(x^k, y^k) + (\lambda^{k+1})^\mathup{T} \nabla_{x}g(x^k, y^k) 
		 - \nabla_x f(x^k, \theta^{k+1}) - (z^{k})^\mathup{T} \nabla_{x}g(x^k, \theta^{k+1}),  \\
		d_{y}^k := & \frac{1}{c_k} \nabla_y F(x^{k},y^k) + \nabla_y f(x^{k}, y^k) + (\lambda^{k+1})^\mathup{T} \nabla_{y}g(x^k, y^k) - (y^k - \theta^{k+1})/\gamma_1, \\
		d_z^k := & - (z^k - \lambda^{k+1} )/ \gamma_2 - g(x^k, \theta^{k+1}).
	\end{aligned}
\end{equation}
Finally, we implement an update for the variables $(x,y,z)$ using a step size $\alpha_k > 0$:
\begin{equation}\label{update_xy}
		(x^{k+1}, y^{k+1}, z^{k+1}) = \mathrm{Proj}_{X \times Y \times Z} \left( (x^k, y^k, z^k) - \alpha_k (d_x^k, d_y^k, d_z^k) \right).
\end{equation}

The complete algorithm is presented in Algorithm \ref{alg1}. Notably, our proposed algorithm uses only the gradient information of the problem's functions and can be easily implemented when the projections onto the sets $X$ and $Y$ are computationally simple. Furthermore, the updates of $x^{k+1}, y^{k+1}$ and $z^{k+1}$ can be executed in parallel, enhancing computational efficiency.

\begin{algorithm}[tb]
  \caption{ {\bf Bi}level {\bf C}onstrained {\bf GA}p {\bf F}unction-based {\bf F}irst-order {\bf A}lgorithm ({\bf BiC-GAFFA})}
  \label{alg1}
\begin{algorithmic}
   \STATE {\bfseries Input:} $(x^0,y^0, z^0) \in X \times Y \times Z$, $\theta^0 \in Y$, stepsizes $\alpha_k, \eta_k > 0$, proximal parameter $\gamma=(\gamma_1,\gamma_2)$, penalty parameter $c_k$;
   \FOR{$k=0$ {\bfseries to} $K-1$}
   \STATE $\bullet\ $ calculate $d_{\theta}^k $ as in \eqref{def_d_t};
   \STATE $\bullet\ $ update 
   \[
   	\theta^{k+1} = \mathrm{Proj}_{Y}\left( \theta^k - \eta_k d_{\theta}^k  \right), \quad 	\lambda^{k+1} = \mathrm{Proj}_{\mathbb{R}^p_+}\left( z^k + \gamma_2 g(x^k,y^k)  \right);
   \]
   \STATE $\bullet\ $ calculate $d_x^k, d_y^k, d_z^k$ as in \eqref{def_d};
   \STATE $\bullet\ $ update 
   \[
		(x^{k+1}, y^{k+1}, z^{k+1}) = \mathrm{Proj}_{X \times Y \times Z} \left( (x^k, y^k, z^k) - \alpha_k (d_x^k, d_y^k, d_z^k) \right).
   \]
   \ENDFOR
\end{algorithmic}
\end{algorithm}

\section{Non-asymptotic convergence analysis}
\label{analysis}

In this section, we rigorously establish the non-asymptotic analysis for BiC-GAFFA towards the truncated and penalized approximation problem
\begin{equation}\label{problem_pen}
	\min_{ (x,y,z) \in X \times Y \times Z}  
\psi_{c}(x,y,z) := F(x,y) + c \, \mathcal{G}_{\gamma}(x,y,z).
\end{equation}
The proofs of Lemmas and Theorem presented in this section are available in Appendix \ref{proofs3}.

\subsection{General assumptions}

The following assumptions formalize the smoothness property of the UL objective $F$, and smoothness and convexity properties of the LL objective $f$ and the LL constraints $g$.

\begin{assumption}[{\bf UL objective}]\label{assump-UL}
	The UL objective $F$ is $L_F$-smooth on $X \times Y$. 
	Additionally, 
	$F$ is bounded below on $X \times Y$, \textit{i.e.}, $\underline{F} := \inf_{(x, y)\in X\times Y} F(x,y) > -\infty$. 
\end{assumption}

\begin{assumption}[{\bf LL objective}]\label{assump-LL}
	Assume that the following conditions hold:
	\begin{enumerate}[(i)]
		\item For each $x\in X$, $f$ is convex \textit{w.r.t.} LL variable $y$ on $Y$.
		\item $f$ is continuously differentiable on an open set containing $X \times Y$ and is $L_f$-smooth on $X \times Y$. 
	\end{enumerate}
\end{assumption}

\begin{assumption}[{\bf LL constraints}]\label{assump-LLC}
	Assume that the following conditions hold:
	\begin{enumerate}[(i)]
		\item For each $x\in X$, $g$ is convex \textit{w.r.t.} LL variable $y$ on $Y$.
		\item $g(x,y)$ is $L_g$-Lipschitz continuous on $X\times Y$.
		\item $g(x,y)$ is continuously differentiable on an open set containing $X \times Y$, $\nabla_{x} g(x,y)$ and $\nabla_{y} g(x,y)$ are $L_{g_1}$ and $L_{g_2}$-Lipschitz continuous on $X \times Y$, respectively. 
	\end{enumerate}
\end{assumption}

Our assumptions  solely require the first-order differentiability of the problem functions, avoiding (possibly)  higher-order smoothness.
The setting of this study substantially relaxes the existing requirement for second-order differentiability in constrained BiO literature. 
Moreover, we do not impose either the strong convexity on the LL objective $f(x,\cdot)$ or the fully convexity of the LL constraints $g(x,y)$. 

\subsection{Convergence results}

The proof of non-asymptotic convergence for BiC-GAFFA primarily hinges on establishing the sufficient descent property of the merit function defined as follows:
\begin{equation*}
	V_k := \phi_{c_k}(x^k, y^k, z^k) +  C_{\theta} \left\|\theta^{k}  - \theta^*(x^k, y^k, z^k) \right\|^2,
\end{equation*}
where 
\begin{equation*}
	\phi_{c_k}(x,y,z) := \frac{1}{ c_k}\big(F(x,y) - \underline{F} \big) +   \mathcal{G}_{\gamma}(x,y,z),
\end{equation*}
and $C_{\theta} := L_f +  r L_{g_1} + \frac{1}{\gamma_1} + L_g.$  With appropriately chosen step sizes $\alpha_k$ and $\eta_k$, the following inequality holds
\[
V_{k+1} - V_k 
\le \, -  \frac{1}{4\alpha_k} \left\| w^{k+1} - w^k \right\|^2 - \frac{\eta_kC_\theta}{2 \gamma_1} \| \theta^{k} - \theta^*(w^k) \|^2,
\]
where  $w^k:=(x^k, y^k, z^k)$. See Lemma \ref{lem9} for a detailed description. 

We define the residual function for problem \eqref{problem_pen} as follows
	\begin{equation*}
		R(x,y,z) := \mathrm{dist} \left(0, \nabla \psi_{c}(x,y,z) + \mathcal{N}_{X \times Y \times Z}(x,y,z)\right),
	\end{equation*}
noting that $R(x,y,z) = 0$ if and only if $(x,y,z)$ is a stationary point to \eqref{problem_pen}, i.e., $$0 \in \nabla \psi_{c}(x,y,z) + \mathcal{N}_{X \times Y \times Z}(x,y,z). $$

\begin{theorem}\label{thm2}
	Under Assumptions \ref{assump-UL}, \ref{assump-LL} and \ref{assump-LLC},  assume that $X$, $Y$ are compact sets,  $\gamma_1 > 0$, $\gamma_2 > 0$, $c > 0$ and $\eta_k \in (\underline{\eta}, 1/(L_f + r L_{g_2} + 1/\gamma_1) )$ with $ \underline{\eta} > 0$.  Then there exists $c_{\alpha} > 0$ such that when $\alpha_k \in ( \underline{\alpha}, {c_\alpha})$ with $\underline{\alpha} > 0$, the sequence of $(x^k, y^k, z^k,\theta^k, \lambda^k)$ generated by Algorithm \ref{alg1} satisfies
	\[
	\min_{0 \le k \le K} R(x^{k+1}, y^{k+1}, z^{k+1}) = O\left(\frac{1}{K^{1/2}} \right).
	\]
\end{theorem}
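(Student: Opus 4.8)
The plan is to convert the one-step sufficient-descent estimate of Lemma \ref{lem9} into a summable bound on the movement of the iterates and the inner tracking error, and then to control the stationarity residual $R(w^{k+1})$ by exactly these two quantities through the projected-gradient optimality condition.

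First I would telescope. Since $\phi_{c}(w^k)=\frac{1}{c}\big(F(x^k,y^k)-\underline F\big)+\mathcal{G}_{\gamma}(w^k)\ge 0$ by Assumption \ref{assump-UL} together with $\mathcal{G}_{\gamma}\ge 0$ from Lemma \ref{lem0}, and since the quadratic term in the merit function is nonnegative, the sequence $V_k$ is bounded below by $0$ while $V_0<\infty$ by compactness of $X\times Y\times Z$. Summing the descent inequality
\[
V_{k+1}-V_k\le -\frac{1}{4\alpha_k}\big\|w^{k+1}-w^k\big\|^2-\frac{\eta_k C_\theta}{2\gamma_1}\big\|\theta^{k}-\theta^*(w^k)\big\|^2
\]
over $k=0,\dots,K-1$ and using the lower step-size bounds $\alpha_k>\underline\alpha$ and $\eta_k>\underline\eta$ yields a constant $C_0$ with
\[
\sum_{k=0}^{K-1}\Big(\big\|w^{k+1}-w^k\big\|^2+\big\|\theta^{k}-\theta^*(w^k)\big\|^2\Big)\le C_0\,V_0.
\]

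Next I would bound the residual. The update \eqref{update_xy} is a projected-gradient step $w^{k+1}=\mathrm{Proj}_{X\times Y\times Z}(w^k-\alpha_k d^k)$, so its optimality condition gives $\tfrac{1}{\alpha_k}(w^k-w^{k+1})-d^k\in\mathcal{N}_{X\times Y\times Z}(w^{k+1})$. Because $\nabla\psi_{c}=c\,\nabla\phi_{c}$ and $\mathcal{N}_{X\times Y\times Z}$ is a cone, $R(w^{k+1})=c\,\mathrm{dist}\!\big(0,\nabla\phi_{c}(w^{k+1})+\mathcal{N}_{X\times Y\times Z}(w^{k+1})\big)$, and inserting the normal-cone element above gives
\[
R(w^{k+1})\le c\Big\|\nabla\phi_{c}(w^{k+1})-d^k+\tfrac{1}{\alpha_k}(w^k-w^{k+1})\Big\|.
\]
I would then split $\nabla\phi_{c}(w^{k+1})-d^k=\big[\nabla\phi_{c}(w^{k+1})-\nabla\phi_{c}(w^k)\big]+\big[\nabla\phi_{c}(w^k)-d^k\big]$: the first bracket is handled by Lipschitz smoothness of $\nabla\phi_{c}$ (equivalently of $\nabla\mathcal{G}_{\gamma}$), while the second is the gradient-approximation error from replacing $\theta^*(w^k)$ by $\theta^{k+1}$ in \eqref{Ggradient}---note $\lambda^{k+1}=\lambda^*(w^k)$ exactly by \eqref{update_lambda}---which the Lipschitz continuity of $\nabla_x f,\nabla_x g$ and $g$ from Assumptions \ref{assump-LL}--\ref{assump-LLC} on the compact domain bounds by $C_1\|\theta^{k+1}-\theta^*(w^k)\|$. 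Since $\theta^*(w^k)$ minimizes the $(1/\gamma_1)$-strongly convex, $(L_f+rL_{g_2}+1/\gamma_1)$-smooth objective in \eqref{def_tl*}, and $\theta^{k+1}$ is one step \eqref{update_theta} from $\theta^k$, the range $\eta_k<1/(L_f+rL_{g_2}+1/\gamma_1)$ makes this step a contraction, so $\|\theta^{k+1}-\theta^*(w^k)\|\le\|\theta^{k}-\theta^*(w^k)\|$. Absorbing $1/\alpha_k$ via $\alpha_k>\underline\alpha$, this produces a constant $C_2$ with $R(w^{k+1})\le C_2\big(\|w^{k+1}-w^k\|+\|\theta^{k}-\theta^*(w^k)\|\big)$; squaring, summing, and invoking the summability above gives $\sum_{k=0}^{K-1}R(w^{k+1})^2\le 2C_2^2 C_0 V_0$, whence $\min_{0\le k<K}R(w^{k+1})^2\le 2C_2^2 C_0 V_0/K$ and the claimed $O(K^{-1/2})$ rate.

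The hard part will be the smoothness ingredient underpinning the residual bound: establishing a uniform Lipschitz constant for $\nabla\mathcal{G}_{\gamma}$, which requires Lipschitz continuity of the inner solution maps $\theta^*(\cdot)$ and $\lambda^*(\cdot)$. This leans on the strong convexity injected by the proximal terms $\tfrac{1}{2\gamma_1}\|\theta-y\|^2$ and $\tfrac{1}{2\gamma_2}\|\lambda-z\|^2$ and on compactness to make the constants uniform, and it must mesh with the contraction estimate for the single inner gradient step. The most delicate bookkeeping is choosing the threshold $c_\alpha$ so that the step-size windows $(\underline\alpha,c_\alpha)$ and $(\underline\eta,1/(L_f+rL_{g_2}+1/\gamma_1))$ are simultaneously compatible with both the descent inequality of Lemma \ref{lem9} and the contraction of $\theta^{k+1}$ toward $\theta^*(w^k)$.
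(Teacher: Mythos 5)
Your proposal is correct and follows essentially the same route as the paper's own proof (which treats Theorem \ref{thm2} as the $\rho=0$ case of Theorem \ref{prop1}): telescope the descent inequality of Lemma \ref{lem9} using $V_k\ge 0$, then bound $R(w^{k+1})$ via the projected-gradient optimality condition by the sum of the iterate displacement, the Lipschitz error $\|\nabla\psi_c(w^{k+1})-\nabla\psi_c(w^k)\|$, and the gradient-approximation error $C_\theta\|\theta^{k+1}-\theta^*(w^k)\|\le C_\theta\|\theta^{k}-\theta^*(w^k)\|$, and finally square and sum. The supporting ingredients you flag as the hard part (Lipschitz continuity of $\theta^*$, $\lambda^*$ and hence of $\nabla\mathcal{G}_\gamma$, and the contraction of the inner step) are exactly the paper's Lemmas \ref{lem_theta}, \ref{lem3} and \ref{lem4}.
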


In subsequent analysis, we consider Algorithm \ref{alg1}  and the penalty approximation problem \eqref{problem_pen} with an iteratively increasing penalty parameter $c_k$, leading to the following convergence result based on the residual function $
	R_k(x,y,z) := \mathrm{dist} \left(0, \nabla \psi_{c_k}(x,y,z) + \mathcal{N}_{X \times Y \times Z}(x,y,z)\right).
$

\begin{theorem}\label{prop1}
	Under Assumptions \ref{assump-UL}, \ref{assump-LL} and \ref{assump-LLC},  assume that $X$, $Y$ are compact sets,  $\gamma_1 > 0$, $\gamma_2 > 0$, $c_k =c(k+1)^\rho$ with $c > 0$, $\rho \in [0,1/2)$ and $\eta_k \in (\underline{\eta}, 1/(L_f + r L_{g_2} + 1/\gamma_1) )$ with $ \underline{\eta} > 0$.  Then there exists $c_{\alpha} > 0$ such that when $\alpha_k \in ( \underline{\alpha}, {c_\alpha})$ with $\underline{\alpha} > 0$, the sequence of $(x^k, y^k, z^k,\theta^k, \lambda^k)$ generated by Algorithm \ref{alg1} satisfies
		\[
	\min_{0 \le k \le K} R_k(x^{k+1}, y^{k+1}, z^{k+1}) = O\left(\frac{1}{K^{(1-2\rho)/2}} \right).
	\]
 Furthermore, if $\rho > 0$ and $\psi_{c_k}(x^k,y^k,z^k)$ is uniformly bounded above, then the sequence of $(x^k,y^k,z^k)$ satisfies 
\begin{equation*}
    0\leq \mathcal{G}_{\gamma}(x^K,y^K,z^K)=O\left(\frac{1}{K^\rho} \right).
\end{equation*}
\end{theorem}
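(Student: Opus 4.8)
The plan is to adapt the argument behind Theorem~\ref{thm2} (the constant-penalty case, $\rho=0$) while carefully tracking the effect of the growing penalty $c_k=c(k+1)^\rho$. The backbone is the sufficient-descent inequality of Lemma~\ref{lem9} for the merit function $V_k$, which telescopes. Writing $a_k := \|w^{k+1}-w^k\|^2 + \|\theta^k - \theta^*(w^k)\|^2$ and summing the descent inequality from $k=0$ to $K-1$, I would use $\alpha_k < c_\alpha$ and $\eta_k > \underline{\eta}$ to lower-bound the right-hand side by a positive multiple of $\sum_{k=0}^{K-1} a_k$, and bound the left-hand side by $V_0 - V_K \le V_0$. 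The last step uses $V_K \ge 0$, which holds because each summand of $V_K$ is nonnegative: $F \ge \underline{F}$, $\mathcal{G}_{\gamma} \ge 0$ by Lemma~\ref{lem0}, and the $\theta$-term is a square. This yields $\sum_{k=0}^{K-1} a_k \le D$ for a constant $D$ independent of $K$.

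The new ingredient is a clean way to pull $c_k$ out of the residual. Since $\psi_{c_k} = c_k\phi_{c_k} + \underline{F}$ we have $\nabla\psi_{c_k} = c_k \nabla\phi_{c_k}$, and because $\mathcal{N}_{X \times Y \times Z}$ is a cone I would use the scaling identity
\[
R_k(w) = \mathrm{dist}\!\left(0, c_k\nabla\phi_{c_k}(w) + \mathcal{N}_{X \times Y \times Z}(w)\right) = c_k\,\mathrm{dist}\!\left(0, \nabla\phi_{c_k}(w) + \mathcal{N}_{X \times Y \times Z}(w)\right).
\]
Next I would exploit the projected-gradient optimality of the update \eqref{update_xy}, namely $\tfrac{1}{\alpha_k}(w^k - w^{k+1}) - d^k \in \mathcal{N}_{X \times Y \times Z}(w^{k+1})$ for the composite direction $d^k:=(d_x^k,d_y^k,d_z^k)$ of \eqref{def_d}, so that $\zeta_k := \nabla\phi_{c_k}(w^{k+1}) - d^k + \tfrac{1}{\alpha_k}(w^k - w^{k+1})$ is a feasible element of $\nabla\phi_{c_k}(w^{k+1}) + \mathcal{N}_{X \times Y \times Z}(w^{k+1})$, whence $R_k(w^{k+1}) \le c_k\|\zeta_k\|$. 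Splitting $\zeta_k$ into a Lipschitz-gradient term $\nabla\phi_{c_k}(w^{k+1}) - \nabla\phi_{c_k}(w^k)$, the gradient-approximation error $\nabla\phi_{c_k}(w^k) - d^k$, and $\tfrac{1}{\alpha_k}(w^k-w^{k+1})$, I would bound each on the compact $X\times Y\times Z$: the first by $L_\phi\|w^{k+1}-w^k\|$ with $L_\phi$ uniform in $k$ since $1/c_k \le 1/c_0$; the second by a multiple of $\|\theta^{k+1} - \theta^*(w^k)\|$, since the only discrepancy from the exact gradient \eqref{Ggradient} is $\theta^{k+1}$ in place of $\theta^*$ (recall $\lambda^{k+1}=\lambda^*(w^k)$ exactly by \eqref{update_lambda}) and this is controlled by Assumptions~\ref{assump-LL}--\ref{assump-LLC} together with $\|z^k\|$ bounded on $Z$; and the third by $\tfrac{1}{\underline{\alpha}}\|w^{k+1}-w^k\|$. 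Using that the single projected $\theta$-step \eqref{update_theta} does not increase the distance to $\theta^*(w^k)$ under the stated $\eta_k$-range, i.e. $\|\theta^{k+1}-\theta^*(w^k)\| \le \|\theta^k - \theta^*(w^k)\|$, this delivers $R_k(w^{k+1})^2 \le c_k^2\, C\, a_k$.

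Combining the two bounds gives $\sum_{k=0}^{K-1} R_k(w^{k+1})^2/c_k^2 \le C\sum_{k=0}^{K-1} a_k \le CD$, hence
\[
\Big(\min_{0\le k\le K-1} R_k(w^{k+1})^2\Big)\sum_{k=0}^{K-1}\frac{1}{c_k^2} \le \sum_{k=0}^{K-1}\frac{R_k(w^{k+1})^2}{c_k^2} \le CD.
\]
Since $\sum_{k=0}^{K-1} c_k^{-2} = c^{-2}\sum_{k=1}^{K} k^{-2\rho} = \Theta(K^{1-2\rho})$ for $2\rho<1$, I obtain $\min_k R_k(w^{k+1})^2 = O(K^{-(1-2\rho)})$, i.e. the claimed $O(K^{-(1-2\rho)/2})$ rate, with $\rho=0$ recovering Theorem~\ref{thm2}. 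For the final assertion, the lower bound $\mathcal{G}_{\gamma} \ge 0$ is Lemma~\ref{lem0}, while from $\psi_{c_K} = F + c_K\mathcal{G}_{\gamma}$ and $F \ge \underline{F}$ I get $c_K\,\mathcal{G}_{\gamma}(w^K) = \psi_{c_K}(w^K) - F(w^K) \le \psi_{c_K}(w^K) - \underline{F}$; the uniform bound $\psi_{c_k}(w^k) \le M$ then yields $\mathcal{G}_{\gamma}(w^K) \le (M-\underline{F})/c_K = O(K^{-\rho})$.

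The main obstacle will be the second paragraph: verifying the gradient-approximation estimate $\|\nabla\phi_{c_k}(w^k) - d^k\| \le C'\|\theta^{k+1}-\theta^*(w^k)\|$ uniformly in $k$, which needs the smoothness and Lipschitz bounds of Assumptions~\ref{assump-LL}--\ref{assump-LLC} plus compactness of $X,Y,Z$ to control the coefficients entering \eqref{Ggradient}, and confirming the $\theta$-step is nonexpansive toward $\theta^*(w^k)$ under the stated $\eta_k$-range. Most of this machinery is shared with Theorem~\ref{thm2}; the genuinely new work is the cone-scaling identity for $R_k$ and the $c_k^{-2}$-weighted summation, which is exactly what converts the uniform $O(1/K)$ control of $\min_k a_k$ into the $c_k$-degraded rate.
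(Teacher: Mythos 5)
Your proposal matches the paper's own proof in all essentials: telescoping the Lemma~\ref{lem9} descent inequality, bounding the residual via the projected-gradient optimality condition split into the Lipschitz-gradient term, the $\theta$-approximation error (using $\lambda^{k+1}=\lambda^*(w^k)$ exactly and the contraction of Lemma~\ref{lem4}), and the $\tfrac{1}{\alpha_k}\|w^{k+1}-w^k\|$ term, followed by the $c_k^{-2}$-weighted summation with $\sum_k c_k^{-2}=\Theta(K^{1-2\rho})$ and the same one-line argument for $\mathcal{G}_{\gamma}(w^K)=O(K^{-\rho})$. The only cosmetic difference is that you factor $c_k$ out of the residual up front via the cone-scaling identity, whereas the paper carries $\nabla\psi_{c_k}=c_k\nabla\phi_{c_k}$ through and divides by $c_k^2$ at the end; these are equivalent.
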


\begin{remark}
 The hypergradient norm is commonly employed as a stationary measure for BiO problems, when the LL problem is unconstrained and strongly convex. However, for constrained BiO problems, even if the LL objective is (strongly) convex \textit{w.r.t.} the LL variable, the differentiability of variants of hypergradient, including optimistic and pessimistic ones, is not fully understood. To our best knowledge, no universally recognized stationary measure is known in this scenario. Different methods use various stationary measures. For instance, the KKT residual function is utilized in \cite{pmlr-v202-liu23y,lu2023slm}. The residual functions in the above theorems originate from the corresponding penalized problems, aligning with several previous prior on first-order penalty methods \cite{liu2023value,ShenC23,lu2023first,kwon2024on}. 
\end{remark}

\section{ Extension to bilevel optimization with minimax lower-level problem }
\label{extension}
 
In this section, we explore the extension of our proposed gradient-based, single-loop, Hessian-free algorithm, originally designed for bilevel optimization problems with constrained lower-level problems, to bilevel optimization problems with minimax lower-level problem \cite{beck2023survey, sato2021gradient}, 
\begin{equation}\label{bilevel_minimax}
	\begin{aligned}
		\min_{x \in X, y \in Y, z \in Z} F(x,y,z)  
		\quad
		\mathrm{s.t.} \quad  (y, z) \in \mathcal{SP}(x),
	\end{aligned}
\end{equation}
where $\mathcal{SP}(x)$ denotes the set of saddle points for the convex-concave minimax problem,
\begin{equation}\label{LL_minimax}
	\begin{aligned}
		\min_{y \in Y} \max_{z  \in Z}  f(x, y, z),
	\end{aligned}
\end{equation}
where $x \in \mathbb{R}^n$, $y \in \mathbb{R}^{m}$ and $z \in \mathbb{R}^{p}$, the sets $X$, $Y$ and $Z$ are closed convex sets in $ \mathbb{R}^n$, $ \mathbb{R}^{m}$ and  $\mathbb{R}^{p}$, respectively. The UL objective $F:X\times Y \times Z \rightarrow\mathbb{R}$, and the LL objective $f:X\times Y \times Z\rightarrow\mathbb{R}$ are continuously differentiable with $f$ being convex in $y$ and concave in $z$.  
Building upon the idea applied in the development of the regularized gap function  \eqref{def_vg}, we introduce the doubly regularized gap function for  lower-level minimax problems, defined as:
\begin{equation}\label{def_minimax}
	\begin{aligned}
		\mathcal{G}^{\mathrm{saddle}}_{\gamma}(x,y,z):= \max_{\theta \in Y, \lambda \in Z} \Big\{
		f(x,y,\lambda) - \frac{1}{2\gamma_2}\| \lambda - z\|^2 - f(x,\theta,z)- \frac{1}{2\gamma_1}\| \theta - y\|^2 \Big\}.
	\end{aligned}
\end{equation}
This newly introduced gap function enables the following equivalent single-level reformulation of the problem \eqref{bilevel_minimax}, detailed in Theorem \ref{thm_reformulate-a},
\begin{equation}\label{wVP3}
	\min_{(x, y, z)\in X \times Y \times Z}  \  F(x,y,z) \quad
	\mathrm{s.t.} \quad  \mathcal{G}^{\mathrm{saddle}}_{\gamma}(x,y,z) \leq 0.
\end{equation}
Utilizing this reformulation,  analogous to BiC-GAFFA, we can propose a gradient-based, single-loop, Hessian-free algorithm for problem \eqref{bilevel_minimax}, detailed in Algorithm \ref{alg2} in the Appendix. While the primary focus of this paper is the bilevel optimization with constrained lower-level problems, we defer the convergence analysis of this algorithm to future work.

\section{Numerical experiments}
\label{sec:experiments}

To validate both the theoretical and practical performance of our proposed algorithm (BiC-GAFFA), we conduct experiments on both synthetic tests and real-world applications. We compare BiC-GAFFA with various related algorithms on the synthetic tests and some real-world applications. Detailed information about the experiments can be found in Appendix \ref{sec:exp_details}. Our code is available at \url{https://github.com/SUSTech-Optimization/BiC-GAFFA}.

\subsection{Synthetic experiments}
\label{sec:synthetic_exp}

Here we consider the following synthetic bilevel optimization problem:
\begin{small}
\begin{equation}\label{toyexample0}
\begin{array}{>{\displaystyle}c @{\,\,} >{\displaystyle}l}
\min_{\substack{\mathbf{x} \in \mathcal{X},\\ (\mathbf{y}_1, \mathbf{y}_2)\in \mathcal{Y}}}\quad 
& (\mathbf{y}_1 - 2\cdot \mathds{1}_n)^\mathup{T} (\mathbf{x} - \mathds{1}_n) + \|\mathbf{y}_2 + 3\cdot \mathds{1}_n\|^2 \\
\text{s.t.} &
(\mathbf{y}_1, \mathbf{y}_2) \in 
\mathop{\mathrm{argmin}}_{(\mathbf{y}_1, \mathbf{y}_2)\in \mathcal{Y}}
\big\{  
  \frac12 \| \mathbf{y}_1 \|^2 - \mathbf{x}^\mathup{T} \mathbf{y}_1 + \mathds{1}_n^\mathup{T} \mathbf{y}_2 
  \ \text{ s.t. } \ \sum_{i=1}^n h(\mathbf{x}_i) + \mathds{1}_n^\mathup{T} \mathbf{y}_1 + \mathds{1}_n^\mathup{T} \mathbf{y}_2 = 0
\big\},
\end{array}
\end{equation}
\end{small}%
where $\mathcal{X} = \mathbb{R}^n$, $\mathcal{Y} = \mathbb{R}^n \times \mathbb{R}^n$, and $h(t): \mathbb{R} \mapsto \mathbb{R}$ is defined as $h(t) = t^q$. We consider the cases $q=1$ and $q=3$. Note in both cases, the optimal solution is $\mathbf{x}^* = \mathds{1}_n$, $\mathbf{y}_1^* = 2\cdot\mathds{1}_{n}$, $\mathbf{y}_2^* = -3\cdot \mathds{1}_n$, and $S(\mathbf{x}) = \{(\mathbf{x}+\mathds{1}_n, \mathbf{y}_2)|\sum_{i=1}^n h(\mathbf{x}_i) + \mathds{1}_n^\mathup{T} (\mathbf{x}+\mathds{1}_n) + \mathds{1}_n^\mathup{T} \mathbf{y}_2 = 0\}$, where $\mathds{1}_n$ denotes the $n$-dimensional vector with all elements equal to $1$. 

\vspace{-5pt}
\begin{figure}[ht]
\centering 
\begin{minipage}[t]{0.49\textwidth}
    \includegraphics[width=.99\linewidth]{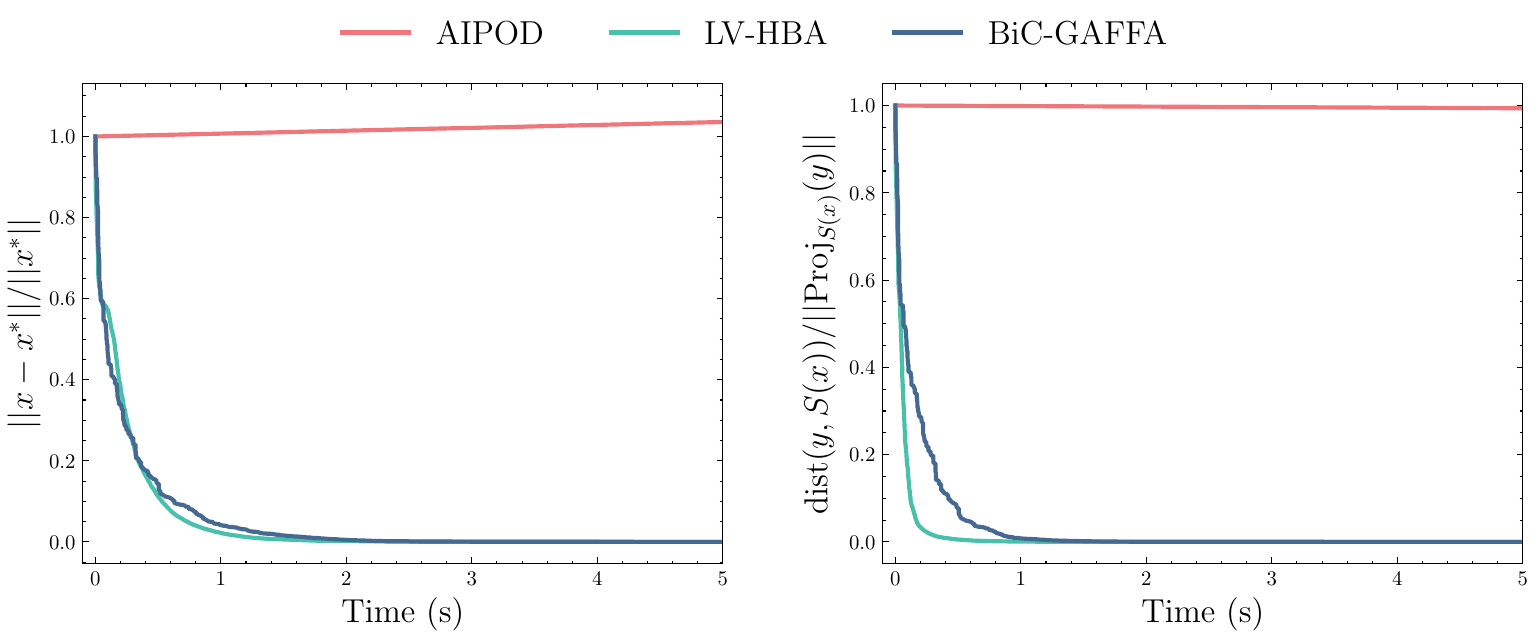}
\end{minipage}
\begin{minipage}[t]{0.49\textwidth}
    \includegraphics[width=.99\linewidth]{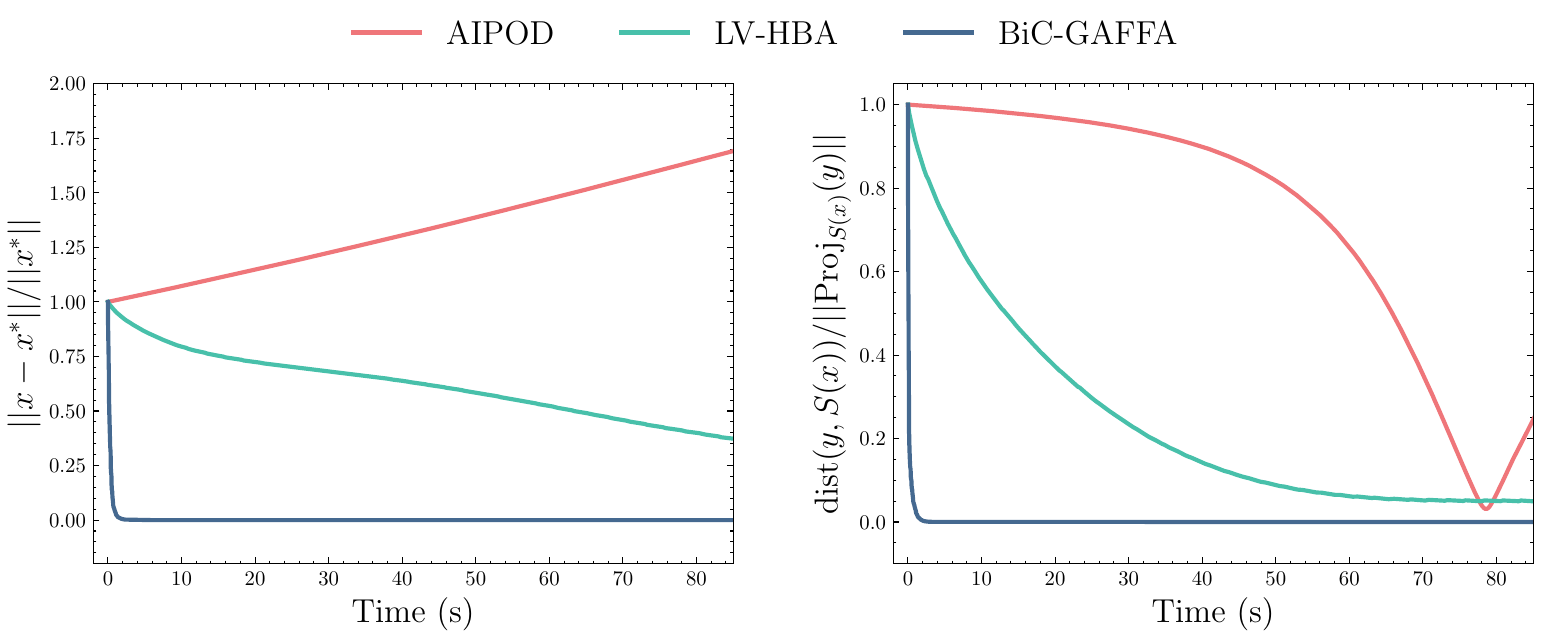}
\end{minipage}
\caption{The first two pictures are results for the problem \eqref{toyexample0} with $q=1, n=1000$, and the third and fourth pictures are results for the problem \eqref{toyexample0} with $q=3, n=1000$.}
\label{fig:toy}
\end{figure}
\vspace{-5pt}
We compare our algorithm (BiC-GAFFA) with AiPOD \cite{xiao2023alternating} and LV-HBA \cite{yao2024constrained} on the synthetic tests. Hyperparameters are collected in Appendix \ref{app:toy}. 
The comparison results are collected in Figure \ref{fig:toy}. From this, we can see that BiC-GAFFA converges fast and correctly in both cases, while AiPOD fails to optimize either of the problems, the algorithm LV-HBA converges well in the case of $q=1$ but runs slowly in the case of $q=3$ due to the high complexity of the projection step.

\vspace{-5pt}
\begin{figure}[ht]
\begin{minipage}[h]{.69\textwidth}
	\centering 
	\captionof{table}{Sensitivity analysis on the problem \eqref{toyexample0} with $q=3, n=100$.}
	\label{tab:sensitivity_q=3}
	\resizebox{.49\textwidth}{!}{
	\begin{tabular}{cc cc c c}
	\toprule 
	$\gamma_1$ & $\gamma_2$ & $\alpha_k$ & $\eta_k$ & $\rho$ & Time (s) \\
	\midrule
	\rowcolor{lightgray}  10 &  1.0 & 0.001    & 0.01     & 0.3 & 2.24  \\
	\cellcolor{lightgray}  7 &  1.0 & 0.001    & 0.01     & 0.3 & 3.04  \\
	\cellcolor{lightgray}  5 &  1.0 & 0.001    & 0.01     & 0.3 & 2.47  \\
	\cellcolor{lightgray}  3 &  1.0 & 0.001    & 0.01     & 0.3 & 2.28  \\
	\cellcolor{lightgray}  1 &  1.0 & 0.001    & 0.01     & 0.3 & 2.66  \\
	   10 &  \cellcolor{lightgray} 0.1 & 0.001    & 0.01     & 0.3 & 2.89  \\
	   10 &  \cellcolor{lightgray} 0.3 & 0.001    & 0.01     & 0.3 & 2.72  \\
	   10 &  \cellcolor{lightgray} 0.5 & 0.001    & 0.01     & 0.3 & 2.60  \\
	\bottomrule
	\end{tabular}
	}
	\resizebox{.49\textwidth}{!}{
	\begin{tabular}{cc cc c c}
	\toprule 
	$\gamma_1$ & $\gamma_2$ & $\alpha_k$ & $\eta_k$ & $\rho$ & Time (s) \\
	\midrule
	   10 &  \cellcolor{lightgray} 0.7 & 0.001    & 0.01     & 0.3 & 2.88  \\
	   10 &  1.0 & \cellcolor{lightgray} 0.0001   & \cellcolor{lightgray} 0.001    & 0.3 & 22.74  \\
	   10 &  1.0 & \cellcolor{lightgray} 0.0003   & \cellcolor{lightgray} 0.003    & 0.3 & 7.69  \\
	   10 &  1.0 & \cellcolor{lightgray} 0.0005   & \cellcolor{lightgray} 0.005    & 0.3 & 4.64  \\
	   10 &  1.0 & \cellcolor{lightgray} 0.0007   & \cellcolor{lightgray} 0.007    & 0.3 & 3.25  \\
	   10 &  1.0 & 0.001    & 0.01     & \cellcolor{lightgray} 0.1 & 0.45  \\
	   10 &  1.0 & 0.001    & 0.01     & \cellcolor{lightgray} 0.2 & 1.34  \\
	   10 &  1.0 & 0.001    & 0.01     & \cellcolor{lightgray} 0.4 & 2.92  \\
	   10 &  1.0 & 0.001    & 0.01     & \cellcolor{lightgray} 0.49 & 1.95  \\
	\bottomrule
	\end{tabular}
	}
\end{minipage}
\begin{minipage}[h]{.3\textwidth}
	\centering
	\includegraphics[width=.95\textwidth]{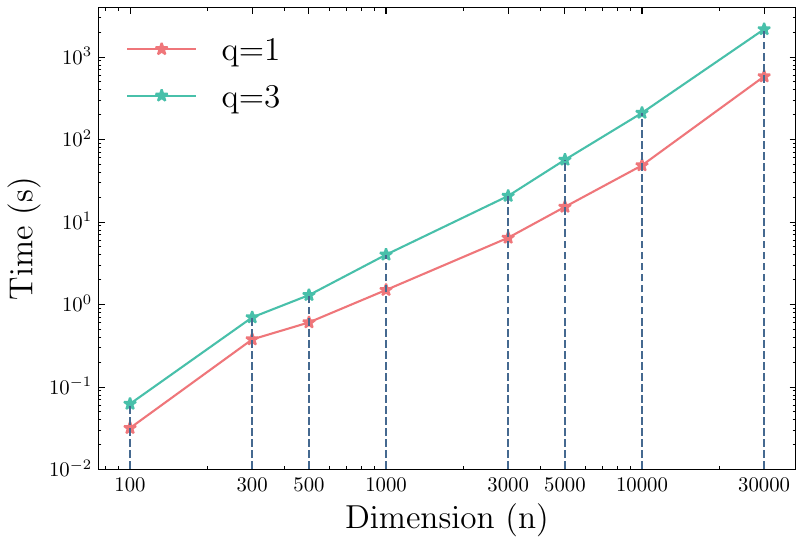}
	\caption{Time cost v.s. $n$.}
	\label{fig:toy_n}
\end{minipage}
\end{figure}

\textbf{Sensitivity analysis.}
We use the time when $\|\mathbf{x}_k - \mathbf{x}^*\| / \|\mathbf{x}^*\| < 0.01$ as a metric to measure the performance of  BiC-GAFFA. The comparison results for different hyperparameters are presented in Table \ref{tab:sensitivity_q=3}, which demonstrate the robustness of  BiC-GAFFA. Figure \ref{fig:toy_n} shows the results based on the problem's dimension, which is plotted on logarithmic scales for both axes, indicating that BiC-GAFFA scales well for large-scale problems. Detailed reports and additional information are provided in Appendix \ref{app:toy}.

\subsection{Hyperparameter optimization}

\begin{wraptable}{r}{.4\textwidth}
\centering 
\caption{Results on the sparse group Lasso hyperparameter selection problem with nTr = 100, nVal = 100, nTest = 300.}
\label{tab:sgl1}
\resizebox{.35\textwidth}{!}{
\begin{tabular}[c]{c ccc}
\toprule
Method      &         Time(s) & Val Err & Test Err \\
\midrule
Grid        &  17.3 $\pm$  0.9  &  35.9 $\pm$  7.2  &  37.7 $\pm$  6.7   \\
Random      &  17.4 $\pm$  0.7  &  33.6 $\pm$  6.7  &  35.7 $\pm$  6.2   \\
TPE         &  16.9 $\pm$  0.7  &  33.9 $\pm$  7.0  &  36.0 $\pm$  5.6   \\
IGJO        &  21.2 $\pm$  2.2  &  19.7 $\pm$  2.8  &  25.6 $\pm$  4.4   \\
VF-iDCA     &  12.4 $\pm$  0.5  &  14.6 $\pm$  2.6  &  25.4 $\pm$  3.9   \\
BiC-GAFFA   &  21.4 $\pm$  0.7  &   7.3 $\pm$  1.3  &  22.3 $\pm$  3.0   \\
\bottomrule 
\end{tabular}
}
\end{wraptable}
Hyperparameter optimization (HO) is an inherent bilevel optimization.  
According to Theorem 3.1 of Gao's work \cite{gao2022value}, we can turn the hyperparameter optimization problem of a statistical learning model into an equivalent constrained BiO problem. 
Three HO problems are considered in this section. 
Detailed results and information are provided in Appendix \ref{app:ho}.

\textbf{Sparse group LASSO problem.} For the HO problem for the sparse group LASSO model, we compare BiC-GAFFA with some widely-used algorithms, including Grid Search, Random Search, TPE \cite{bergstra2013making}, IGJO \cite{feng2018gradient}, and VF-iDCA \cite{gao2022value}. Detailed settings are provided in Appendix \ref{app:ho}. From Table\ref{tab:sgl1}, we can see that BiC-GAFFA outperforms the other algorithms in terms of both validation and test errors.

\textbf{Support vector machine (SVM).}
In this part, we apply BiC-GAFFA to a HO problem where the lower model (base learner) is a SVM and the upper model is a signed distance based loss. 
We compare BiC-GAFFA with GAM \cite{xu2023efficient} and LV-HBA and the results are shown in Figure \ref{fig:svm}. We can see that BiC-GAFFA converges faster than GAM and achieves a better performance.

\begin{figure}[ht] 
    \centering 
    \begin{minipage}[t]{.48\textwidth}
        \includegraphics[width=.99\linewidth]{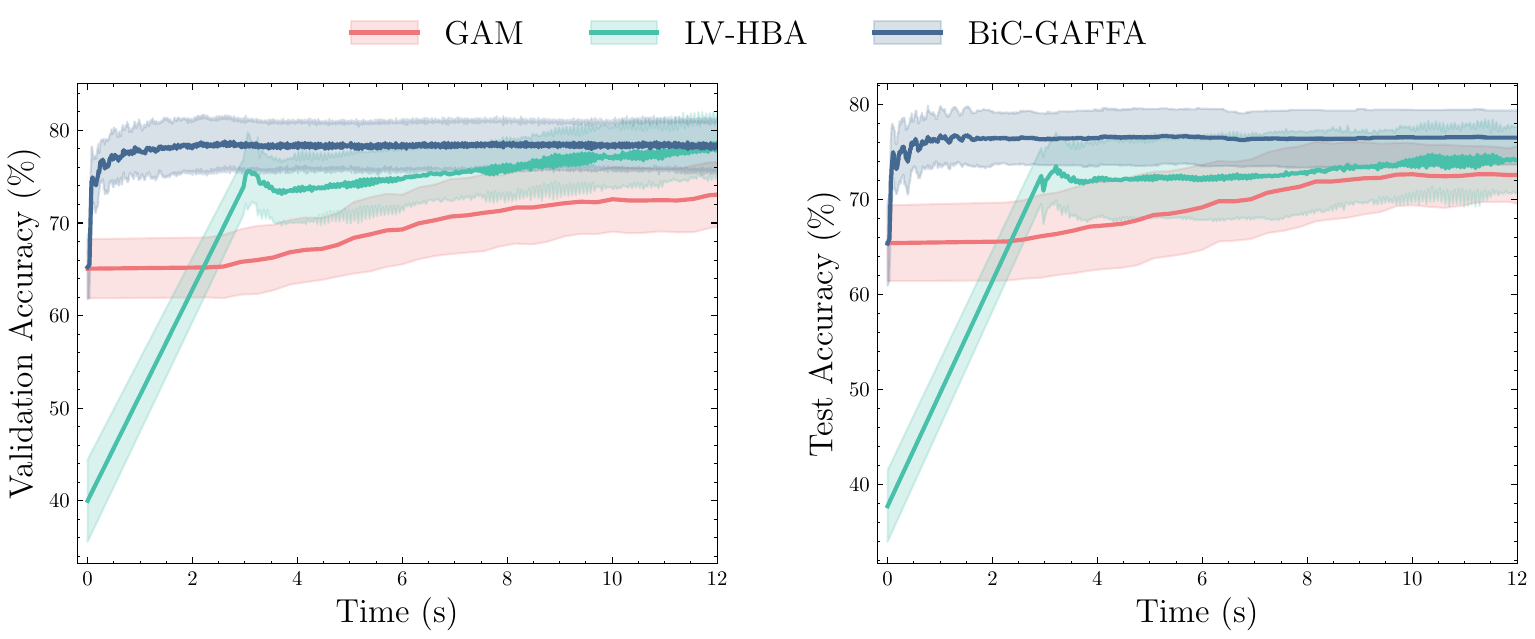}
		\caption{Results of the SVM problem on the diabetes dataset.}
		\label{fig:svm}
    \end{minipage}
	\hspace{.02\textwidth}
    \begin{minipage}[t]{.48\textwidth}
        \includegraphics[width=.99\linewidth]{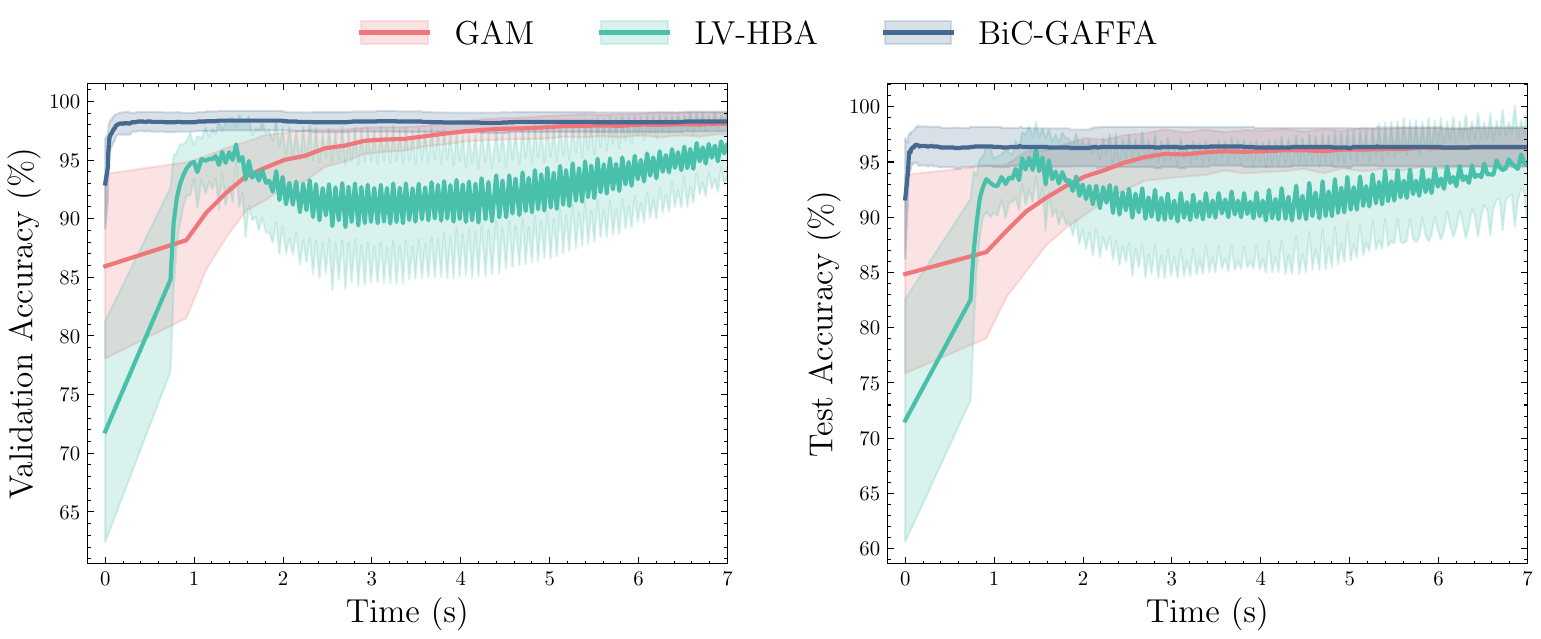}
		\caption{Results of the data hyper-cleaning problem on the breast-cancer dataset.}
		\label{fig:hyperclean}
    \end{minipage}
\end{figure}

\textbf{Data hyper-cleaning.}
Data hyper-cleaning \cite{franceschi2017forward} is to train a classifier based on data where part of their labels are corrupted with a probability $p_c$, to achieve this goal, we need to train a classifier that can recognize the wrong labeled data. Such a process can be modeled as a HO problem. Results are shown in Figure \ref{fig:hyperclean}, from which we can see that BiC-GAFFA outperforms the other algorithms in terms of both validation and test accuracies.

\subsection{Generative adversarial networks}
Generative adversarial networks (GAN) \cite{goodfellow2014generative} models are also investigated in our paper. It involves a hierarchical structure of optimization with two interacting components: the generator and the discriminator. We compared different training strategies for a distribution recovery problem. Detailed information is provided in Appendix \ref{app:gan}. The numerical results in Figure \ref{fig:gan25} and Table \ref{tab:gan25} show BiC-GAFFA can train such a generative model efficiently.
\begin{figure}[ht]
\centering 
\begin{minipage}[h]{.54\textwidth}
	\includegraphics[width=.99\linewidth]{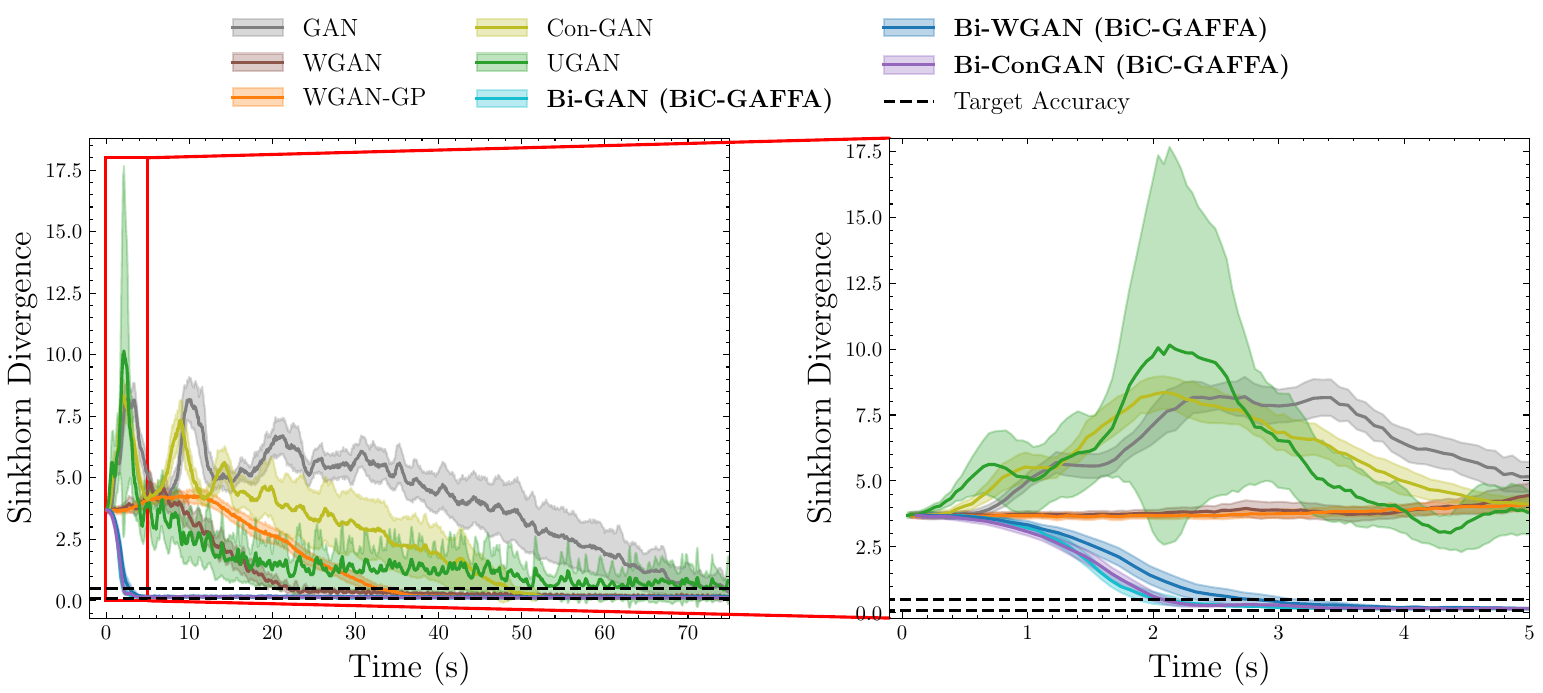}
	\caption{Earth mover's distance v.s Time.}
	\label{fig:gan25}
\end{minipage}
\begin{minipage}[h]{.45\textwidth}
	\centering
	\captionof{table}{Time to meet the required accuracy, ($n$*) records the number of failures.}
	\label{tab:gan25}
	\resizebox{.9\textwidth}{!}{
	\begin{tabular}[h]{ccc}
		\toprule
		\multirow{2}{*}{Method} & \multicolumn{2}{c}{Required accuracy} \\
		\cmidrule(lr){2-3}
		& EM$<0.5$ & EM$<0.1$ \\
		\midrule
		GAN & 60.0 $\pm$ 13.0 & 64.5 $\pm$ 12.5\\
		WGAN & 17.2 $\pm$  2.3 & 32.1 $\pm$  6.0 \\
		WGAN-GP & 31.5 $\pm$  3.5 & 37.7 $\pm$  6.2 \\
		Con-GAN & 36.4 $\pm$ 11.5 & 39.7 $\pm$ 12.3 \\
		UGAN & 22.3 $\pm$ 17.0 (3*) & 24.1 $\pm$ 18.2 (4*)\\
		\textbf{Bi-GAN (BiC-GAFFA)} & 2.0 $\pm$  1.0 & 6.3 $\pm$  4.3 \\
		\textbf{Bi-WAN (BiC-GAFFA)} & 2.6 $\pm$  0.5 & 29.4 $\pm$ 69.0 \\
		\textbf{Bi-ConGAN (BiC-GAFFA)} & 2.1 $\pm$  0.2 & 7.9 $\pm$ 10.5 \\
		\bottomrule
	\end{tabular}}
\end{minipage}

\end{figure}

\section{Discussion and conclusion}
\label{sec:conclusion}

This work introduces BiC-GAFFA, a single-loop first-order algorithm tailored for a broader class of bilevel optimization problems, where the LL constraints may depend on the UL variables. 
A key technique employed here to address constrained LL problem is the newly introduced doubly regularized gap function, which serves as an optimality metric for the LL problems and allows for straightforward gradient evaluation. We also study the non-asymptotic performance guarantees of BiC-GAFFA. The experimental results validate the effectiveness of BiC-GAFFA.

{\bf Limitations and future work.} 
We acknowledge that the proposed method is currently limited to the deterministic setting and we plan to 
extend our method to incorporate uncertainty and distributed settings in future work. Note that BiC-GAFFA is projection-free onto the coupled LL constraint set but requires projections onto the sets $X$ and $Y$. Consequently, it is interesting to develop fully projection-free algorithms for solving general constrained BiO problems.

\bibliographystyle{plain}

\newpage

\appendix
\section{Appendix}

The appendix is organized as follows:
\begin{itemize}

\item We give a brief review of additional related work in Section \ref{relatedwork}.

	\item Experimental details are provided in Section \ref{sec:exp_details}.

	\item Proofs in Section \ref{reformulation} can be found in Section \ref{proofs1}.
	
	\item Proofs in Section \ref{algorithm} are presented in Section \ref{proofs2}.

 \item Proofs in Section \ref{analysis} are given in Section \ref{proofs3}.

 \item Details regarding the extension to bilevel optimization with minimax lower-level problem are discussed in Section \ref{proofs4}.

\end{itemize}

\subsection{Additional related work}
\label{relatedwork}

In the section we give a brief review of additional related work that are directly related to ours. 

{\bf Gap functions.} 
Transforming a BiO problem into a single-level optimization problem is useful both theoretically and computationally. Gap functions are crucial in this process. 
Among them, the most commonly used one is the value function, originally proposed in \cite{outrata1990numerical,ye1995optimality}. When the LL problem is unconstrained and its smooth objective is strongly convex \textit{w.r.t.} the LL variable, the value function of the LL problem is smooth and has a closed-form gradient with the LL solution. Benefiting from this property, fully first-order gradient-based algorithms have been developed, see, \textit{e.g.}, \cite{liu2023value,NeurIPS2022-Liu,ShenC23,kwon2023fully,lu2023first}. 
However, the value function is often nonsmooth when the LL problem is constrained or its objective is merely convex. 
As a result, the value function reformulation usually leads to a nonsmooth single-level optimization problem. 
Recently, the Moreau envelope-based gap functions have emerged, see, \textit{e.g.}, \cite{gao2023moreau,yao2024constrained,liu2024moreau}. Among these works, by utilizing the Moreau envelope-based reformulation for unconstrained LL problem, the very recent study  \cite{liu2024moreau} proposes a novel single-loop gradient-based algorithm for general BiO problems with nonconvex and potentially nonsmooth LL objective functions. In contrast, both \cite{gao2023moreau} and \cite{yao2024constrained} address constrained BiO problems. The key difference is that the Moreau envelope-based reformulation in \cite{gao2023moreau} is nonsmooth, while the reformulation in \cite{yao2024constrained} is smooth. The smoothness is achieved by leveraging a novel proximal Lagrangian value function to handle constrained LL problem. When the LL problem has (coupling) inequality constraints, it is important to note that all the reformulations in these works explicitly preserve these (coupling) inequality constraints. 
In contrast, this work introduces a novel reformulation that integrates the LL (coupling) inequality constraints into a doubly regularized gap function. Consequently, it achieves the minimal smooth constraint, with only one smooth constraint in the single-level reformulation. This provides a significant advantage in algorithm design because it removes the need for Euclidean projection onto the (coupled) LL constraint set. This expands the range of applications and significantly reduces computational costs.

{\bf First-order algorithms.} Many applications involve optimization problems with thousands or millions of variables. First-order algorithms are popular because their storage and computational costs can be kept at a tolerable level. 
In bilevel optimization, the value function approach avoids recurrent second-order calculations like those involving the Hessian matrix in implicit gradient-based methods. This makes it particularly useful for developing efficient first-order, single-loop numerical algorithms, see, \textit{e.g.}, \cite{kwon2023fully,chen2023near}, in cases where the LL problem is unconstrained and its smooth objective is strongly convex \textit{w.r.t.} the LL variable. When the LL problem involves constraints, challenges arise. For instance, the value function is often nonsmooth and lacks an exact straightforward gradient evaluation. 
To address the nonsmooth issue, the work \cite{liu2023value} introduces a sequential minimization algorithm framework using penalty and barrier functions. The recent study \cite{ShenC23} provides a sufficient condition under which the value function is (Lipschitz) smooth and proposes first-order algorithms through the lens of the penalty method for bilevel problems with constraints that rely solely on the LL variable. Also, through the lens of the penalty method, a first-order method with complexity guarantees is developed in \cite{lu2023first} using a novel minimax approach, and the recent study \cite{kwon2024on} proposes a first-order stochastic bilevel optimization algorithm when the LL constraints depend only on the LL variable. Other advances in first-order algorithms for bilevel optimization include: primal-dual algorithms \cite{sow2022constrained}; 
primal nonsmooth reformulation-based algorithm \cite{helou2023primal}; and low-rank implicit gradient-based methods \cite{giovannelli2021inexact}.

\subsection{Details of experiments}
\label{sec:exp_details}
All the experiments in this paper are performed on a computer with Intel(R) Core(TM) i9-9900K CPU @ 3.60GHz and 16.00 GB memory. Except for the manual experiments, the reported data for all the other experiments (including the sparse group LASSO problems, SVM, data hyperclean and GAN) are the statistical results after repeating each experiment 20 times.

\subsubsection{Synthetic problems}
\label{app:toy}
For all the experiments on this problem, we use the initial point $(\mathbf{x}_0, \mathbf{y}_0, \mathbf{y}_1) = (\mathbf{0}_{n}, \mathds{1}_n, \mathds{1}_n)$, where $\mathbf{0}_n$ represents the $n$-dimensional zero vector, and $\mathds{1}_n$ represents the $n$-dimensional vector of all ones.
For the synthetic experiments recorded in Figure \ref{fig:toy}, we use the following settings:
\begin{itemize}
	\item For AIPOD, we set $S = 5$, $T = 2$, $\beta = 0.0001$, $\alpha = 0.0005$;
	\item For LV-HBA, we set $\gamma_1 = 10, \gamma_2 = 1, \alpha = 0.005, \beta = 0.002, \eta = 0.03$;
	\item For BiC-GAFFA, we set $\gamma_1 = 1, \gamma_2 = 0.1, \alpha = 0.001, \eta = 0.01$, $\rho=0.3$.
\end{itemize}
They are applied for both problems, i.e., for both $n = 1000$, $q = 1$ problem and $n=1000$, $q = 3$ problem. Note that AIPOD requires the calculation of matrix inversion, which is computationally expensive, however, due to the special structure of the problem, the matrix inversion in each iteration returns the same result, so we only calculate it once and use the result in all iterations. For LV-HBA, the projection step is the most time-consuming part generally, it gets benefits from the special structure of the problem where $q = 1$ since the projection is efficient in that case, which is the reason why it performs well in the first case. However, for the problem where $q = 3$, the projection step is much more complex, which requires solving a nonlinear optimization problem, therefore, it performs poorly in the second case.

\begin{table}[ht]
\centering 
\caption{Sensitivity analysis on the hyperparemeters for the problems with $n=3$, where $q = 1$ in the left table and $q=3$ in the right table.}
\label{tab:sensitivity}
\resizebox{.46\textwidth}{!}{
\begin{tabular}{cc cc c c}
\toprule 
$\gamma_1$ & $\gamma_2$ & $\alpha_k$ & $\eta_k$ & $\rho$ & Time (s) \\
\midrule
\cellcolor{lightgray} 1 &  1.0 & 0.001    & 0.01     & 0.3 & 2.96  \\
\cellcolor{lightgray} 3 &  1.0 & 0.001    & 0.01     & 0.3 & 2.84  \\
\cellcolor{lightgray} 5 &  1.0 & 0.001    & 0.01     & 0.3 & 2.95  \\
\cellcolor{lightgray} 7 &  1.0 & 0.001    & 0.01     & 0.3 & 2.63  \\
10 & \cellcolor{lightgray} 0.1 & 0.001    & 0.01     & 0.3 & 3.02  \\
10 & \cellcolor{lightgray} 0.3 & 0.001    & 0.01     & 0.3 & 3.14  \\
10 & \cellcolor{lightgray} 0.5 & 0.001    & 0.01     & 0.3 & 2.98  \\
10 & \cellcolor{lightgray} 0.7 & 0.001    & 0.01     & 0.3 & 3.09  \\
10 &  1.0 & \cellcolor{lightgray} 0.0005   & \cellcolor{lightgray} 0.005    & 0.3 & 4.50  \\
10 &  1.0 & \cellcolor{lightgray} 0.0007   & \cellcolor{lightgray} 0.007    & 0.3 & 3.68  \\
10 &  1.0 & \cellcolor{lightgray} 0.003    & \cellcolor{lightgray} 0.03     & 0.3 & 0.83  \\
10 &  1.0 & \cellcolor{lightgray} 0.005    & \cellcolor{lightgray} 0.05     & 0.3 & 0.42  \\
10 &  1.0 & 0.001    & 0.01     & \cellcolor{lightgray} 0.1 & 0.59  \\
10 &  1.0 & 0.001    & 0.01     & \cellcolor{lightgray} 0.2 & 1.76  \\
10 &  1.0 & 0.001    & 0.01     & \cellcolor{lightgray} 0.4 & 2.66  \\
10 &  1.0 & 0.001    & 0.01     & \cellcolor{lightgray} 0.49 & 2.00  \\
\rowcolor{lightgray}   10 &  1.0 & 0.001    & 0.01     & 0.3 & 2.20  \\
\bottomrule
\end{tabular}
}
\resizebox{.46\textwidth}{!}{
\begin{tabular}{cc cc c c}
\toprule 
$\gamma_1$ & $\gamma_2$ & $\alpha_k$ & $\eta_k$ & $\rho$ & Time (s) \\
\midrule
\cellcolor{lightgray} 1 &  1.0 & 0.001    & 0.01     & 0.3 & 2.66  \\
\cellcolor{lightgray} 3 &  1.0 & 0.001    & 0.01     & 0.3 & 2.28  \\
\cellcolor{lightgray} 5 &  1.0 & 0.001    & 0.01     & 0.3 & 2.47  \\
\cellcolor{lightgray} 7 &  1.0 & 0.001    & 0.01     & 0.3 & 3.04  \\
10 & \cellcolor{lightgray} 0.1 & 0.001    & 0.01     & 0.3 & 2.89  \\
10 & \cellcolor{lightgray} 0.3 & 0.001    & 0.01     & 0.3 & 2.72  \\
10 & \cellcolor{lightgray} 0.5 & 0.001    & 0.01     & 0.3 & 2.60  \\
10 & \cellcolor{lightgray} 0.7 & 0.001    & 0.01     & 0.3 & 2.88  \\
10 &  1.0 & \cellcolor{lightgray} 0.0001   & \cellcolor{lightgray}0.001    & 0.3 & 21.98  \\
10 &  1.0 & \cellcolor{lightgray} 0.0003   & \cellcolor{lightgray}0.003    & 0.3 & 7.28  \\
10 &  1.0 & \cellcolor{lightgray} 0.0005   & \cellcolor{lightgray}0.005    & 0.3 & 4.79  \\
10 &  1.0 & \cellcolor{lightgray} 0.0007   & \cellcolor{lightgray}0.007    & 0.3 & 3.30  \\
10 &  1.0 & 0.001    & \cellcolor{lightgray} 0.01     & \cellcolor{lightgray} 0.1 & 0.45  \\
10 &  1.0 & 0.001    & \cellcolor{lightgray} 0.01     & \cellcolor{lightgray} 0.2 & 1.34  \\
10 &  1.0 & 0.001    & \cellcolor{lightgray} 0.01     & \cellcolor{lightgray} 0.4 & 2.92  \\
10 &  1.0 & 0.001    & \cellcolor{lightgray} 0.01     & \cellcolor{lightgray} 0.49 & 2.04  \\
\rowcolor{lightgray} 10 &  1.0 & 0.001    & 0.01     & 0.3 & 2.51  \\
\bottomrule
\end{tabular}
}
\end{table}

To see how parameters impact the performance of the algorithms, we conduct a series of sensitivity analyses, the results are collected in Table \ref{tab:full_sensitivity}. Besides, we also conduct experiments to investigate the relationship between the problem's dimension and the time cost, the results are shown in Figure \ref{fig:toy_n}. For these expriments, we set $\gamma_1 = 10$, $\gamma_2 = 1$, $p = 0.3$, and choose $(\alpha_k, \eta_k) = (20/n, 2/n)$ for problems with $q = 1$, $(\alpha_k, \eta_k) = (10/n, 1/n)$ for problems with $q = 3$. Such a choice is quite rough, but it is sufficient to demonstrate the scalability of BiC-GAFFA. 

\begin{table}[ht]
    \centering
	\caption{Sensitivity analysis on the hyperparameters for the synthetic experiments.}
	\label{tab:full_sensitivity}
\resizebox{.99\textwidth}{!}{
\begin{tabular}[h]{c cc}
    \toprule 
    Settings & $n=100$, $q=1$ & $n = 100$, $q=3$ \\
	\midrule
    $\begin{aligned}
        \gamma_2 = & 1 \\ \alpha_k = & 0.001\\ \eta_k = & 0.01 \\ \rho = & 0.3
    \end{aligned}$
    & 
    \begin{minipage}{.4\textwidth}
        \includegraphics[width=.99\linewidth]{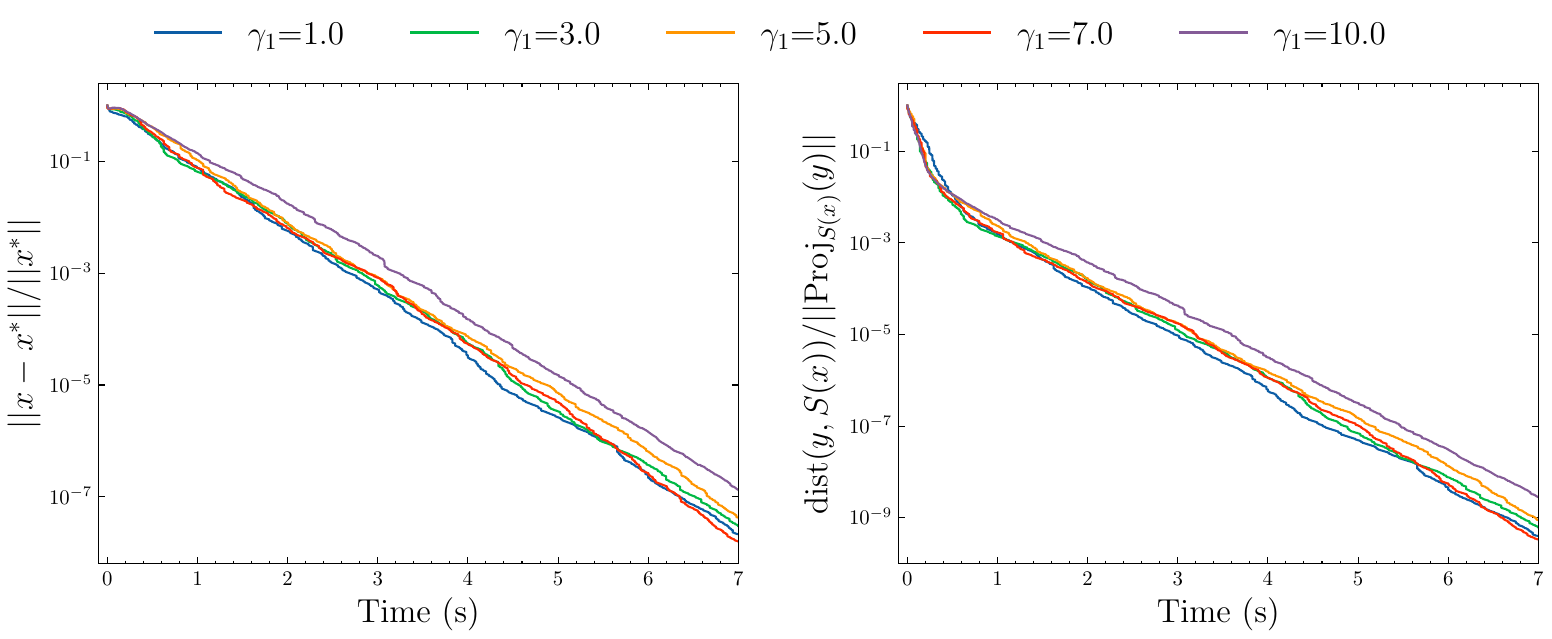}
    \end{minipage}
    & 
    \begin{minipage}{.4\textwidth}
        \includegraphics[width=.99\linewidth]{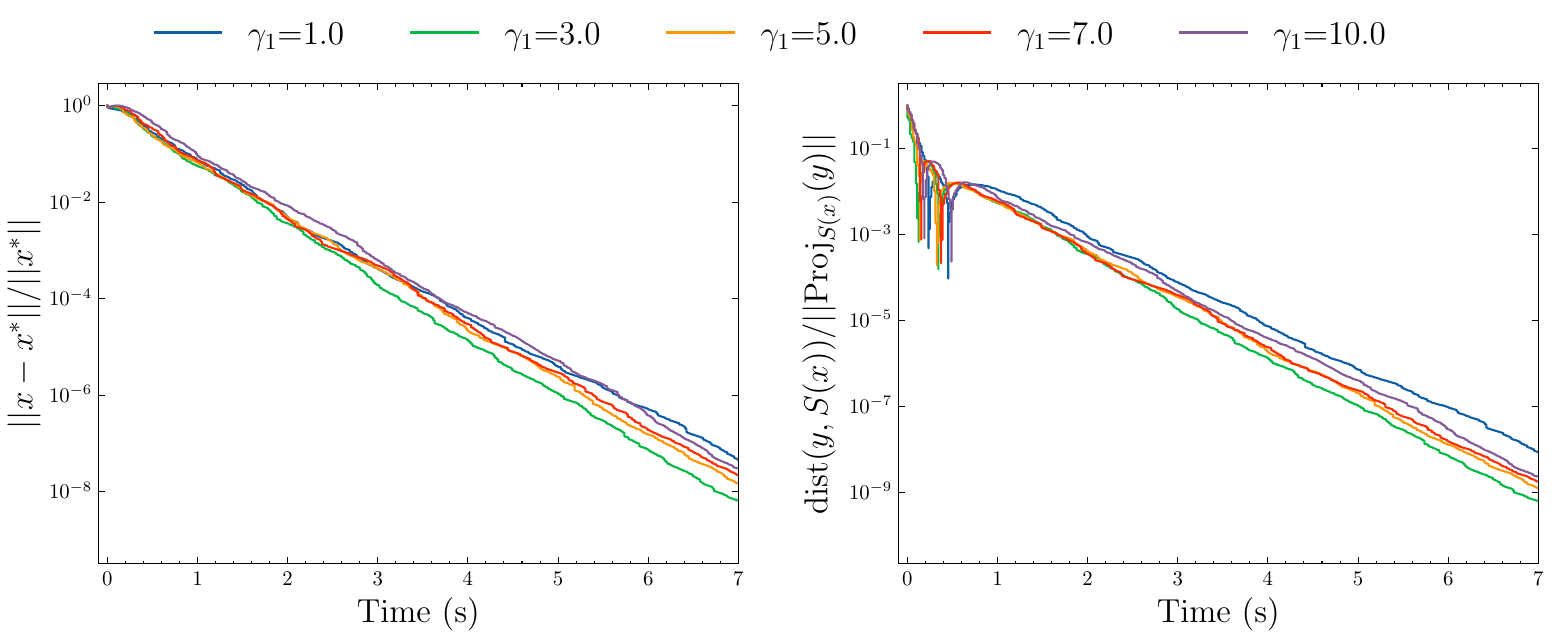}
    \end{minipage} \\
	$\begin{aligned}
        \gamma_1 = & 10 \\ \alpha_k = & 0.001\\ \eta_k = & 0.01 \\ \rho = & 0.3
    \end{aligned}$
    & 
    \begin{minipage}{.4\textwidth}
        \includegraphics[width=.99\linewidth]{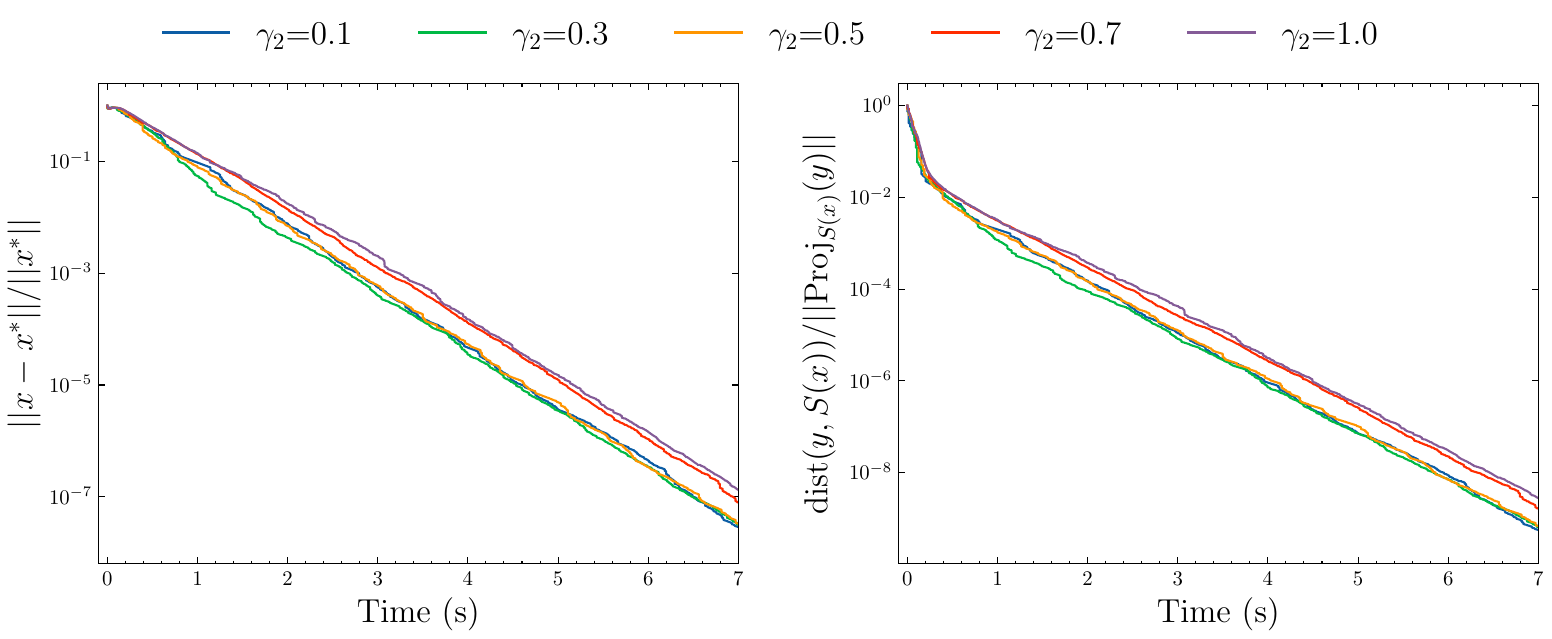}
    \end{minipage}
    & 
    \begin{minipage}{.4\textwidth}
        \includegraphics[width=.99\linewidth]{sensitivity_gam1_P=3.pdf}
    \end{minipage} \\
	$\begin{aligned}
        \gamma_1 = & 10 \\ \gamma_2 = & 1 \\ \eta_k = & 0.01 \\ \alpha_k = & 0.001
    \end{aligned}$
    & 
    \begin{minipage}{.4\textwidth}
        \includegraphics[width=.99\linewidth]{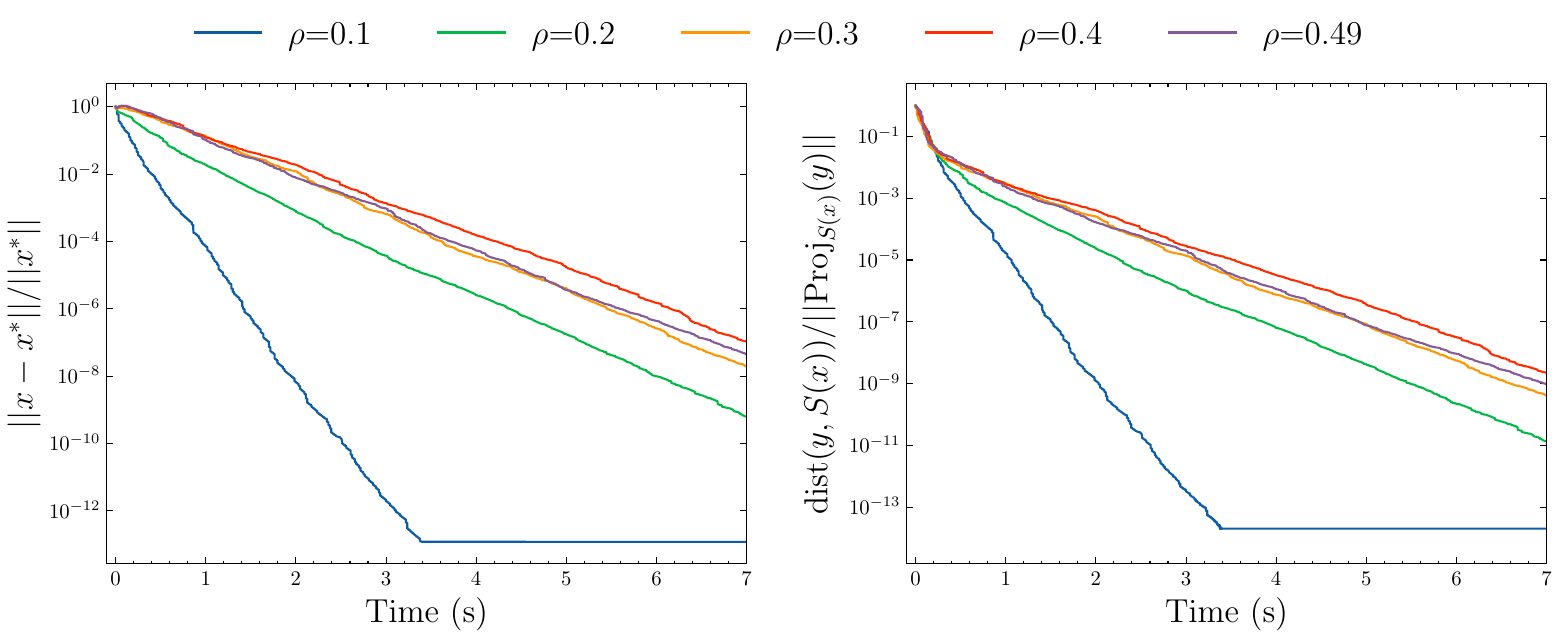}
    \end{minipage}
    & 
    \begin{minipage}{.4\textwidth}
        \includegraphics[width=.99\linewidth]{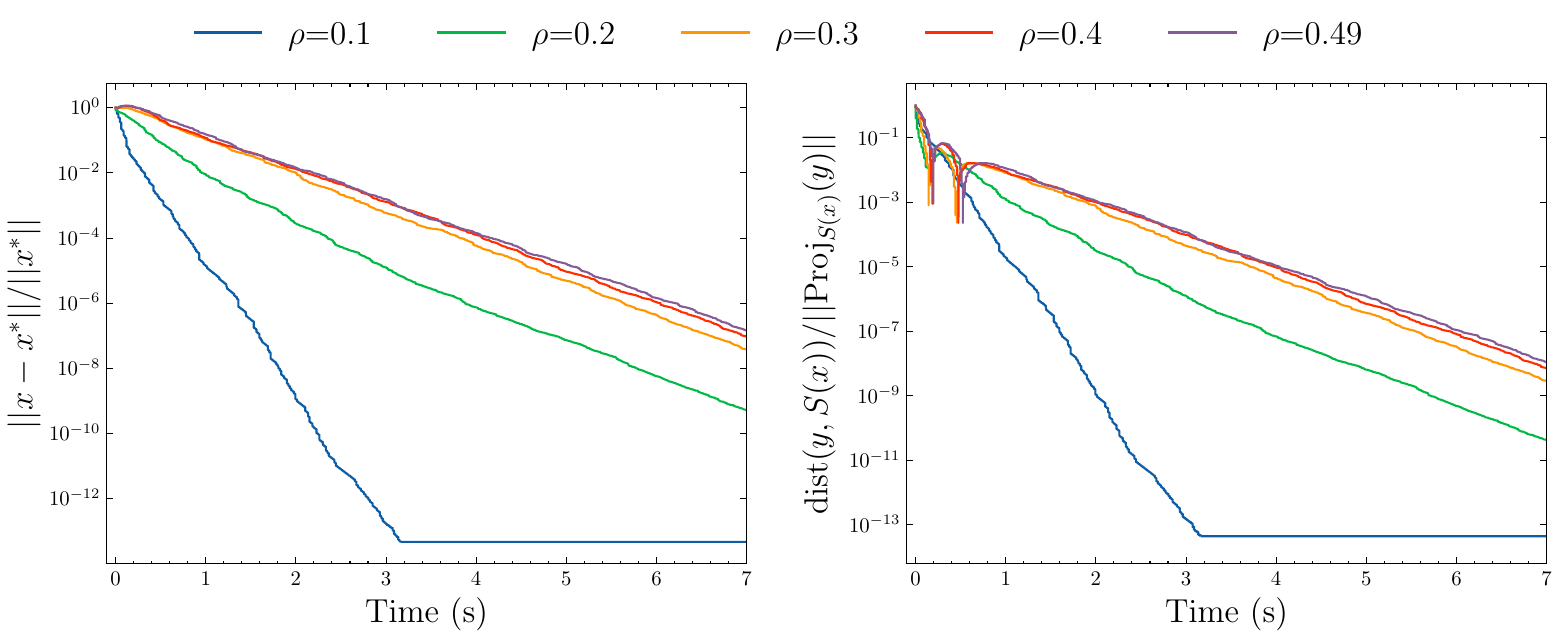}
    \end{minipage} \\
	$\begin{aligned}
        \gamma_1 = & 10 \\ \gamma_2 = & 1 \\ \rho = & 0.3
    \end{aligned}$
    & 
    \begin{minipage}{.4\textwidth}
        \includegraphics[width=.99\linewidth]{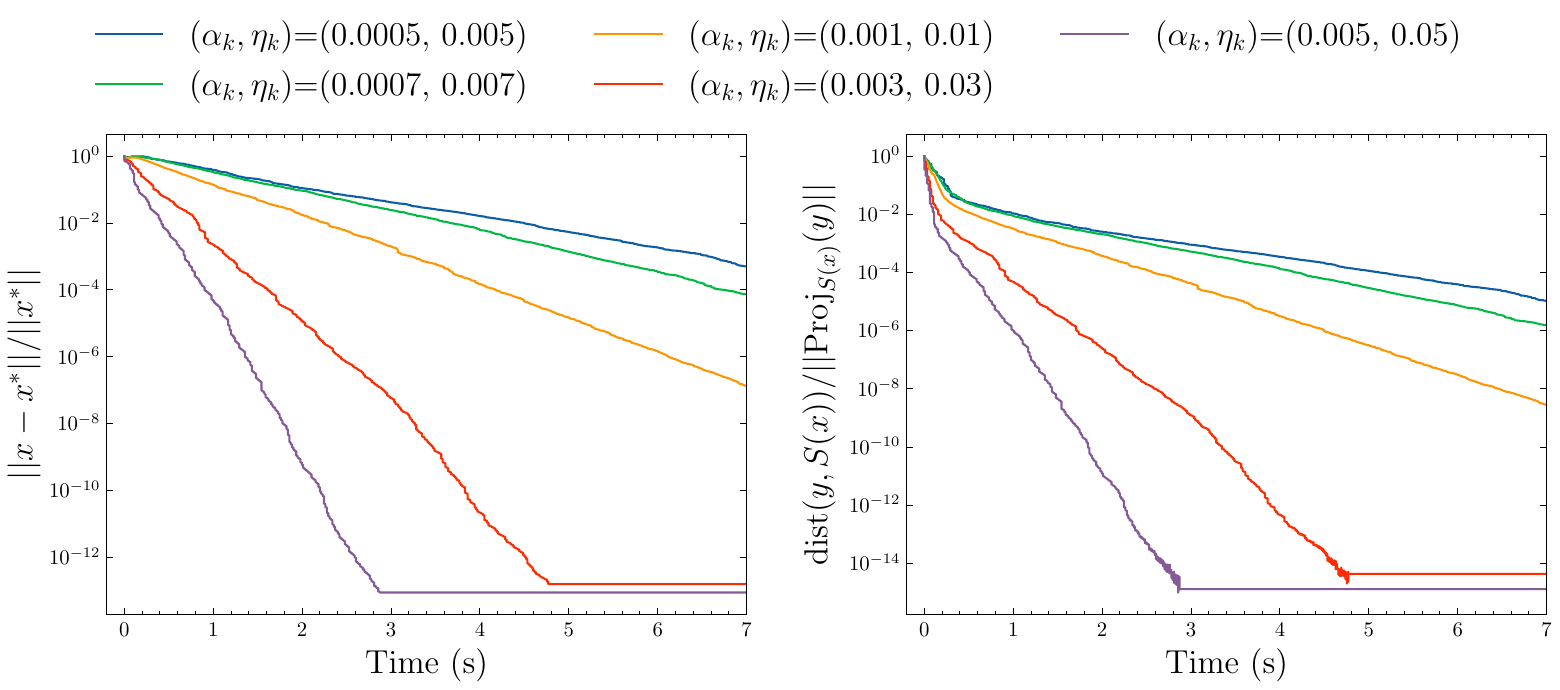}
    \end{minipage}
    & 
    \begin{minipage}{.4\textwidth}
        \includegraphics[width=.99\linewidth]{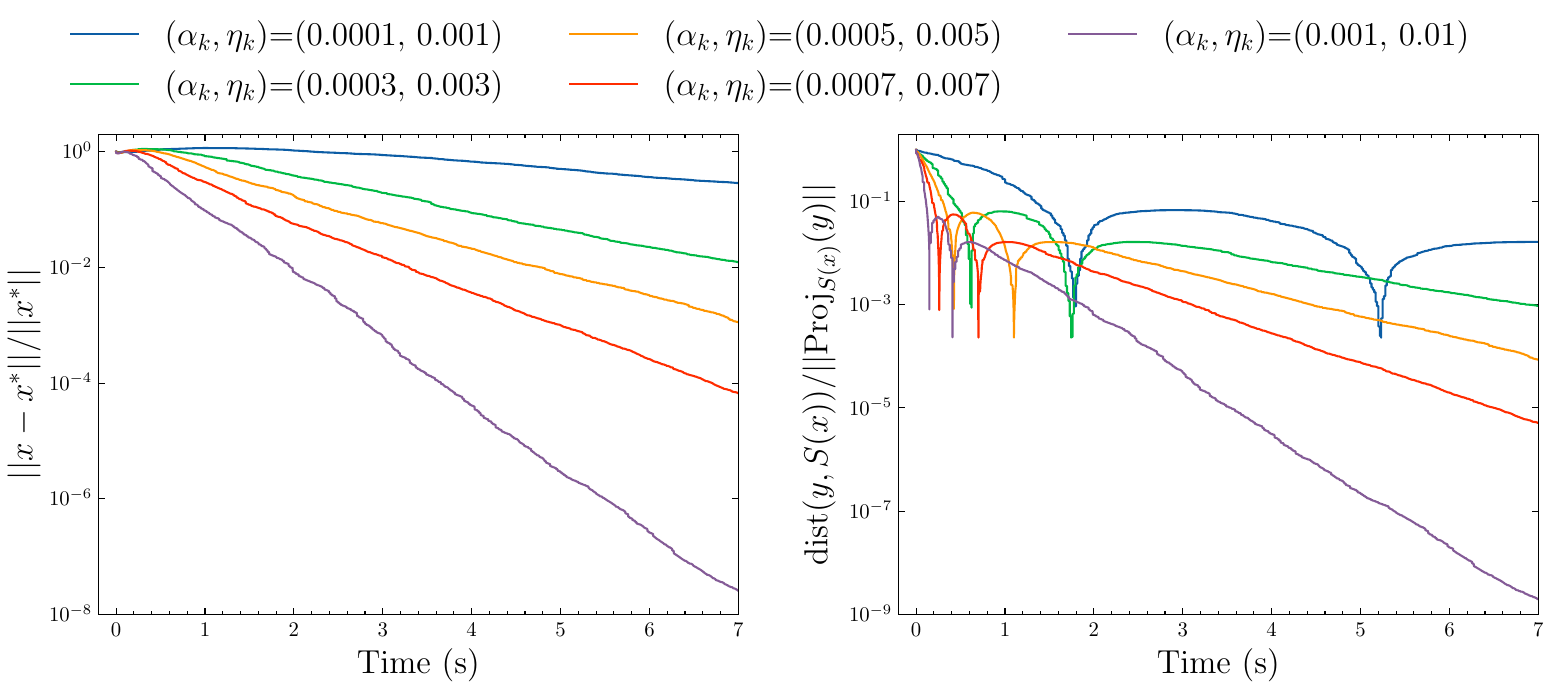}
    \end{minipage} \\
	\bottomrule
\end{tabular}}
\end{table}

\subsubsection{Hyperparameter optimization}
\label{app:ho}
Hyperparameter optimization is an inherent bilevel optimization.  
According to Theorem 3.1 of Gao's work \cite{gao2022value}, we can turn the hyperparameter optimization problem of a statistical learning model 
which consists of data fitting $\mathcal{L}_{\text{data}}(\mathbf{x})$ and convex regularization terms $P_i(\mathbf{x})$ to the following constrained bilevel optimization problem. 
\[
\min_{\mathbf{x} \in \mathbb{R}^n, \mathbf{r} \in \mathbb{R}^J} \mathcal{L}_{\text{val}}(\mathbf{x}) 
\ \text{ s.t. }\ \mathbf{x} \in \mathop{\mathrm{argmin}}_{\mathbf{x} \in \mathbb{R}^n} \Big\{ 
\mathcal{L}_{\text{val}}(\mathbf{x})  \ \text{ s.t. }\ P_i(\mathbf{x}) \le r_i,\ i=1,\cdots,J
\Big\}.
\]

\textbf{Sparse Group LASSO Problem.} 
The sparse group LASSO problem \cite{simon2013sparse} is an advanced statistical learning model, which aims to find the grouped structure of the predictors and select the relevant ones. It is critical to select the weights of the regularization terms. The direct form of the problem is as follows:
\begin{equation}
	\label{SGL1}
\begin{array}{>{\displaystyle}c @{\,\,} >{\displaystyle}l}
    \min_{\bm{\beta} \in \mathbb{R}^p, \bm{\lambda}\in \mathbb{R}_+^{M+1}}\ & \frac12 \sum_{i \in I_{\text{val}}} | b_i - \bm{\beta}^\mathup{T} \mathbf{a}_i |^2 \\
    \text{s.t. }\ & \bm{\beta} \in  
    \mathop{\arg\min}_{\hat{\bm{\beta}} \in \mathbb{R}^p} \bigg\{ \frac12 \sum_{i\in I_{\text{tr}}} | b_i - \hat{\bm{\beta}}^\mathup{T} \mathbf{a}_i |^2 + \sum_{m=1}^M \lambda_m \|\hat{\bm{\beta}}^{(m)}\|_2 + \lambda_{M+1}\|\hat{\bm{\beta}}\|_1 \bigg\}.	
\end{array}
\end{equation}
By decoupling the data fitting term and regularization terms, we can reformulate the problem to the following bilevel optimization problem:
\begin{equation}
\label{SGL2}
\begin{array}{>{\displaystyle}c @{\,\,} >{\displaystyle}l}
    \min_{\bm{\beta} \in \mathbb{R}^p, \mathbf{r}\in \mathbb{R}_+^{M+1}}\ & \frac12 \sum_{i \in I_{\text{val}}} | b_i - \bm{\beta}^\mathup{T} \mathbf{a}_i |^2 \\
    \text{s.t. }\ & \bm{\beta} \in  
	\begin{array}[t]{>{\displaystyle}c @{\,\,} >{\displaystyle}l}
		\mathop{\arg\min}_{\hat{\bm{\beta}} \in \mathbb{R}^p} & \frac12 \sum_{i\in I_{\text{tr}}} | b_i - \hat{\bm{\beta}}^\mathup{T} \mathbf{a}_i |^2 \\ 
		\text{ s.t. } & \|\hat{\bm{\beta}}^{(m)}\|_2 \le r_m, m=1,\dots,M, \quad 
		\|\hat{\bm{\beta}}\|_1 \le r_{M+1}. 
	\end{array}
\end{array}
\end{equation}
From our practice, we find that the proposed BiC-GAFFA algorithm works better on the squared two norms rather than two norms directly, so by introducing $\mathbf{u}\in \mathbb{R}_+^{M+1}$ such that $u_m = r_m^2$ for $m = 1, \dots, M$, and $u_{M+1} = r_{M+1}$, we can reformulate the problem to the following bilevel optimization problem:
\begin{equation}
	\label{SGL3}
\begin{array}{>{\displaystyle}c @{\,\,} >{\displaystyle}l}
    \min_{\bm{\beta} \in \mathbb{R}^p, \mathbf{u}\in \mathbb{R}_+^{M+1}}\ & \frac12 \sum_{i \in I_{\text{val}}} | b_i - \bm{\beta}^\mathup{T} \mathbf{a}_i |^2 \\
    \text{s.t. }\ & \bm{\beta} \in  
	\begin{array}[t]{>{\displaystyle}c @{\,\,} >{\displaystyle}l}
		\mathop{\arg\min}_{\hat{\bm{\beta}} \in \mathbb{R}^p} & \frac12 \sum_{i\in I_{\text{tr}}} | b_i - \hat{\bm{\beta}}^\mathup{T} \mathbf{a}_i |^2 \\ 
		\text{ s.t. } & \|\hat{\bm{\beta}}^{(m)}\|^2_2 \le u_m, m=1,\dots,M, \quad 
		\|\hat{\bm{\beta}}\|_1 \le u_{M+1}.
	\end{array}
\end{array}
\end{equation}
Grid Search, Random Search, Bayesian Optimization and VF-iDCA are applied to the model \eqref{SGL2}, IGJO is applied to the model \eqref{SGL1}, and BiC-GAFFA is applied to the model \eqref{SGL3}. The experimental results are collected in Table \ref{tab:sgl}. 

Data used in these experiments are generated by the following procedures: we random smapled $\mathbf{a}_i\in \mathbb{R}^p$ from the standard normal distribution, and set the true weights $\bm{\beta}$ to be a grouped sparse vector, specifically, it was defined as $\bm{\beta} = [\bm{\beta}^{(1)}, \bm{\beta}^{(2)}, \dots,\bm{\beta}^{(5)}]$, where the first 5 entries of $\bm{\beta}^{(i)}$ are 1, 2, 3, 4, 5, and the rest entries are all zeros. The responses are defined as $b_i = \bm{\beta}^T \mathbf{a}_i + \sigma \epsilon_i$, where the $\epsilon_i$ are generated from the standard normal distribution, and $\sigma$ is chosen such that the signal-to-noise ratio is 3. The training set, validation set, and test set are split randomly, and the size of the training set is denoted as nTr, the size of the validation set is nVal, and the size of the test set is nTest. For all the experiments in this part, $p = 150$, $M = 30$.

The detailed settings of each algorithm are provided as follows:
\begin{itemize}
	\item For Grid Search, we set the grid size to be 20, the range of $r_m$ is $[1, 10]$ for $m=1,\dots,M$, and the range of $r_{M+1}$ is $[1, 100]$;
	\item For Random Search, we set the number of iterations to be 400, the range of $r_m$ is $[0, 10]$ for $m=1,\dots,M$, and the range of $r_{M+1}$ is $[0, 100]$, uniformly sampled method is adopted;
	\item For TPE, we set the number of iterations to be 400, and use uniform distribution for each hyperparameter, the range of $r_m$ is $[0, 10]$ for $m=1,\dots, M$, and the range of $r_{M+1}$ is $[0, 100]$;
	\item For IGJO, we set the number of iterations to be 100 for each hyperparameter, and the initial guess is $0.1 \times \mathds{1}_{M+1}$;
	\item For VF-iDCA, we set the number of iterations to 50 for each hyperparameter. As to the initial guess, we  firstly solve the lower level problem of \eqref{SGL1} with hyperparameters $0.1 \times \mathds{1}_{M+1}$, denote that $r_m = \|\hat{\bm{\beta}}^{(m)}\|_2$ for $m = 1,\dots, M$, $r_{M+1} = \|\hat{\bm{\beta}}\|_1$, then take the $\mathbf{r}$ as the initial guess for the bilevel problem of \eqref{SGL2};
	\item For BiC-GAFFA, we set the number of iterations to be 30000 for each hyperparameter. As to the initial guess, we firstly solve the lower level problem of \eqref{SGL1} with hyperparameters $0.1 \times \mathds{1}_{M+1}$, denote that $u_m = \|\hat{\bm{\beta}}^{(m)}\|_2^2$ for $m = 1,\dots, M$, $u_{M+1} = \|\hat{\bm{\beta}}\|_1$, then take the $\mathbf{u}$ as the initial guess for the bilevel problem of \eqref{SGL3}. For this problem, we always take $\gamma_1 = 10$, $\gamma_2 = 1$, $\eta_k=0.1$, $\alpha_k = 0.01$, and $\rho = 0.3$.
\end{itemize}

\begin{table}[ht]
    \centering 
    \caption{Results on the sparse group Lasso hyperparameter selection problem.}
	\label{tab:sgl}
    \resizebox{.95\textwidth}{!}{
    \begin{tabular}{c cccc ccc}
        \toprule 
\multirow{2}{*}{Method}    & \multicolumn{3}{c}{nTr = 100, nVal = 100, nTest = 300} & \multicolumn{3}{c}{nTr = 300, nVal = 300, nTest = 300} \\
\cmidrule(lr){2-4} \cmidrule(lr){5-7}
          &         Time (s) & Val Err & Test Err &         Time (s) & Val Err & Test Err \\
\midrule
Grid        &  17.3 $\pm$  0.9  &  35.9 $\pm$  7.2  &  37.7 $\pm$  6.7  &  78.7 $\pm$  1.9  &  18.9 $\pm$  2.3  &  19.8 $\pm$  1.8  \\
Random      &  17.4 $\pm$  0.7  &  33.6 $\pm$  6.7  &  35.7 $\pm$  6.2  &  78.6 $\pm$  2.5  &  18.7 $\pm$  2.4  &  19.5 $\pm$  1.9  \\
TPE         &  16.9 $\pm$  0.7  &  33.9 $\pm$  7.0  &  36.0 $\pm$  5.6  &  74.7 $\pm$  2.2  &  18.9 $\pm$  2.3  &  19.8 $\pm$  1.9  \\
IGJO        &  21.2 $\pm$  2.2  &  19.7 $\pm$  2.8  &  25.6 $\pm$  4.4  &  49.9 $\pm$  2.6  &  16.5 $\pm$  2.5  &  18.1 $\pm$  1.4  \\
VF-iDCA     &  12.4 $\pm$  0.5  &  14.6 $\pm$  2.6  &  25.4 $\pm$  3.9  &  40.7 $\pm$  1.7  &  14.9 $\pm$  2.1  &  17.2 $\pm$  1.3  \\
BiC-GAFFA   &  21.4 $\pm$  0.7  &   7.3 $\pm$  1.3  &  22.3 $\pm$  3.0  &  22.0 $\pm$  1.0  &  12.8 $\pm$  1.4  &  17.1 $\pm$  1.3  \\
\bottomrule 
    \end{tabular}
}
\end{table}
From Table \ref{tab:sgl}, we can see that BiC-GAFFA outperforms other algorithms in terms of the validation error and test error, and when the number of data is increased, the time cost of BiC-GAFFA does not increase significantly, which indicates that BiC-GAFFA is scalable to large-scale problems. 

\textbf{Support Vector Machine.}
The mathematical model can be written as follows:
\[
\begin{array}{>{\displaystyle}c @{\,\,} >{\displaystyle}l}
    \min_{\mathbf{c} \in \mathbb{R}^{N_\text{tr}}, \mathbf{w} \in \mathbb{R}^p, b \in \mathbb{R}, \bm{\xi} \in \mathbb{R}^{N_{\text{tr}}}}\  &  \mathcal{L}_{\mathcal{D}_{\text{val}}}(\mathbf{w}, b), \\
    \text{ s.t. } & (\mathbf{w}, b, \bm{\xi}) \in 
    \begin{array}[t]{>{\displaystyle}c @{\,\,} >{\displaystyle}l}
    \mathop{\text{argmin}}_{\mathbf{w} \in \mathbb{R}^p, b \in \mathbb{R}, \bm{\xi} \in \mathbb{R}^{N_{\text{tr}}}} & \frac12 \| \mathbf{w} \|^2 + \frac12 \sum_{i = 1}^{N_{tr}} e^{c_i} \xi_i^2 \\
    \text{ s.t. } & y_i (\mathbf{w}^\mathup{T} \mathbf{x}_i + b) \geq 1 - \xi_i, \quad i = 1, \ldots, N_{\text{tr}}.
    \end{array} 
\end{array} 
\]
Here we use $(\mathcal{D}_{\text{tr}}, \mathcal{D}_{\text{val}})$ to denote the split of the training set and validation set of data, and $N_{\text{tr}}$ to denote the number of data in the training set. The upper-level objective function is defined in the following way: 
\[
	\mathcal{L}_{\mathcal{D}_{\text{val}}}(\mathbf{w}, b) := \frac1{|\mathcal{D}_{\text{val}}|} \sum_{(\mathbf{x}_i, y_i) \in \mathcal{D}_{\text{val}} } \text{Sigmoid}\left( -\frac{ y_i (\mathbf{w}^\mathup{T} \mathbf{x}_i + b) }{\|\mathbf{w}\|^2} \right).
\]
The function inner the sigmoid function gives an opposite of a signed distance between point $(\mathbf{x}_i, y_i)$ and the decision hyperplane $\{\mathbf{x} \in \mathbb{R}^p | \mathbf{w}^\mathup{T} \mathbf{x} + b = 0\}$. Specifically, the inner part is positive when the sign of the prediction $\mathbf{w}^\mathup{T} \mathbf{x}_i + b$ is different with its label $y_i$, and negative otherwise. And the sigmoid function converts the distance values to some probability value. Such a composition makes the objective function to be a smooth approximation of the validation accuracy. 

The dataset we used in this experiment is the scaled diabetes dataset provided in the repository \cite{chang2011libsvm}, which contains 768 data points and 8 features. In each experiment, the training set, validation set, and test set are split randomly, and the size of the training set is 400, and the size of the validation set is 150, the rest part is the test set. 

The initial values of $\mathbf{c}$ in this problem are sampled from a uniform distribution on $[-6, -5]$, the initial values for other parameters are solutions of the lower level problem with hyperparameters $\mathbf{c}$. The hyperparameters for all the three algorithms are:
\begin{itemize}
	\item For GAM, we keep the same setting as the original paper \cite{xu2023efficient} did, the maximal iteration number is set to be 80.
	\item For LV-HBA, we use $\alpha = 0.001$, $\eta = 0.001$, $\gamma_1 = 0.1$, $\gamma_2 = 0.1$, the maiximal iteration number is set to be 400.
	\item For BiC-GAFFA, we use $\gamma_1 = 10$, $\gamma_2 = 0.01$, $\eta_k = 0.01$, $\alpha_k = 0.001$, $\rho = 0.3$, the maximal iteration number is set to be 5000.
\end{itemize}

\textbf{Data Hyperclean.}
For this problem,  we use the same model as the one in the SVM experiments, but we change the dataset to the scaled breast-cancer dataset provided in the repository \cite{chang2011libsvm}, which contains 683 data points and 10 features. In each experiment, the training set, validation set, and test set are split randomly, and the size of the training set is 400, and the size of the validation set is 180, the rest part is the test set. For the training dataset, we change the label with probability $p_c = 40\%$, and the validation set and test set are kept unchanged.
This means the data used in lower training is not reliable, the researcher need to find out which data are more reliable and give them higher weights while giving the unreliable data lower weights. Such a weighting process can be viewed as a data clean procedure. Here we do such a procedure by regarding the weights as hyperparameters and evaluating their effects on the validation set, such a process is regarded as data hyper cleaning. Therefore, such a problem is quite similar to the aforementioned SVM, the main difference occurs in the training data. 

The hyperparameters for all the three algorithms are:
\begin{itemize}
	\item For GAM, we keep the same setting as the original paper \cite{xu2023efficient} did, the maximal iteration number is set to be 30. The initial values of $\mathbf{c}$ are chosen from a uniformed distribution $(1., 2.)$.
	\item For LV-HBA, we use $\alpha = 0.001$, $\eta = 0.001$, $\gamma_1 = 0.1$, $\gamma_2 = 0.1$, the maiximal iteration number is set to be 400. The initial values of $\mathbf{c}$ are chosen from a uniformed distribution $(-5, -4)$;
	\item For BiC-GAFFA, we use $\gamma_1 = 10$, $\gamma_2 = 0.01$, $\eta_k = 0.1$, $\alpha_k = 0.001$, $p = 0.3$, the maximal iteration number is set to be 2000. The initial values of $\mathbf{c}$ are choosen from a unifromed distribution $(-5, -4)$.
\end{itemize}

\subsubsection{Generative adversarial network}
\label{app:gan}
We also apply our algorithm to the GAN, which is a popular model in the field of deep learning. It can be written as the following bilevel optimization problem:
\[
	\min_{G, D}\ 
	\mathcal{L}_{\text{gen}} (G, D)
	\quad \text{s.t.}\quad 
	D \in \mathop{ \text{argmin}_{D \in \mathcal{D}} } 
	\mathcal{L}_{\text{det}}(G, D)
\]
The main idea of GAN is to find a network that learns the distribution of the given training data. 
This paper investigates the fitting of two-dimensional distributions (i.e., 8-Gaussians model and 25-Gaussians model, refer to \cite{gulrajani2017improved}) using GANs. 
For such simple distributions, we can approximate the earth mover’s distance (EMD) between the generated distribution and the true distribution by calculating the Sinkhorn distance\cite{cuturi2013sinkhorn}. 
This allows us to observe the optimization progress of the network during the iteration process, and make comparisons between different strategies.

\begin{figure}[h]
	\centering
	\begin{minipage}[t]{.48\textwidth}
		\includegraphics[width=.99\linewidth]{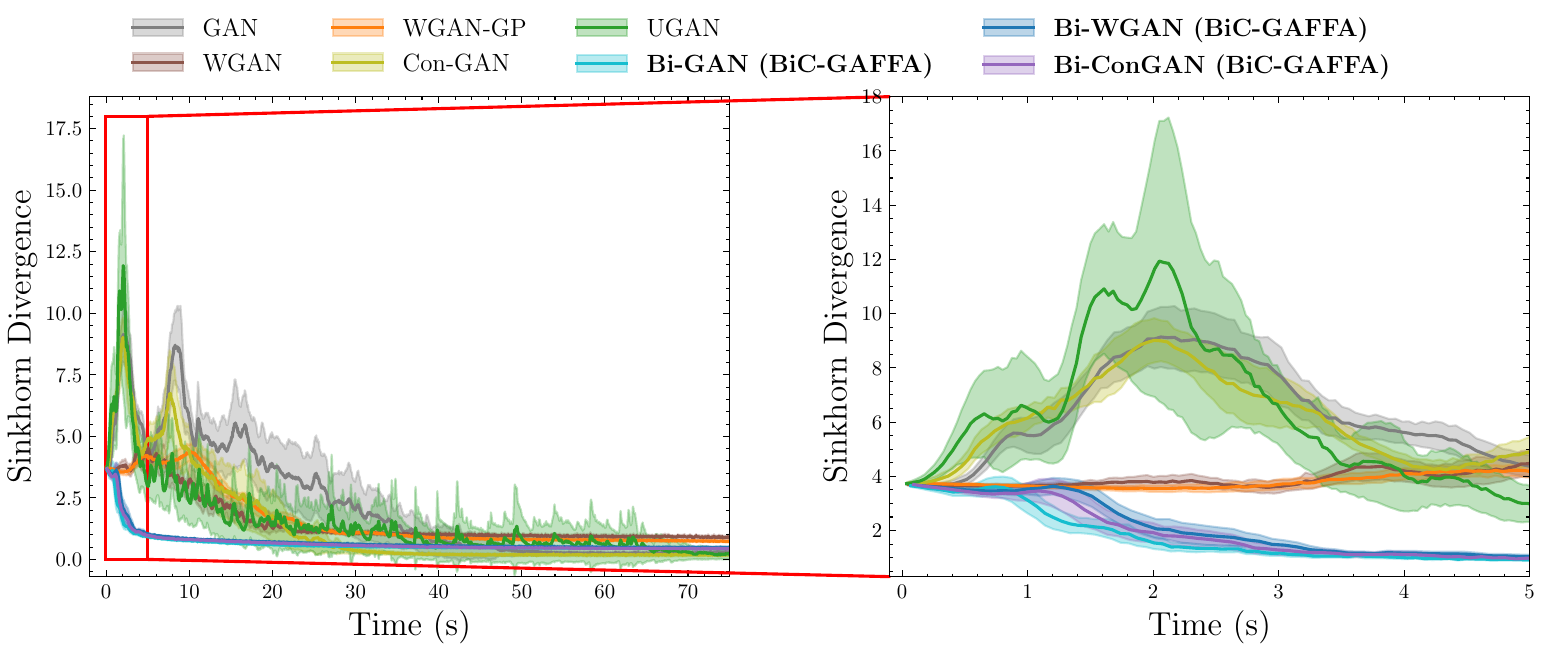}
	\end{minipage}
	\hspace{.02\textwidth}
	\begin{minipage}[t]{.48\textwidth}
		\includegraphics[width=.99\linewidth]{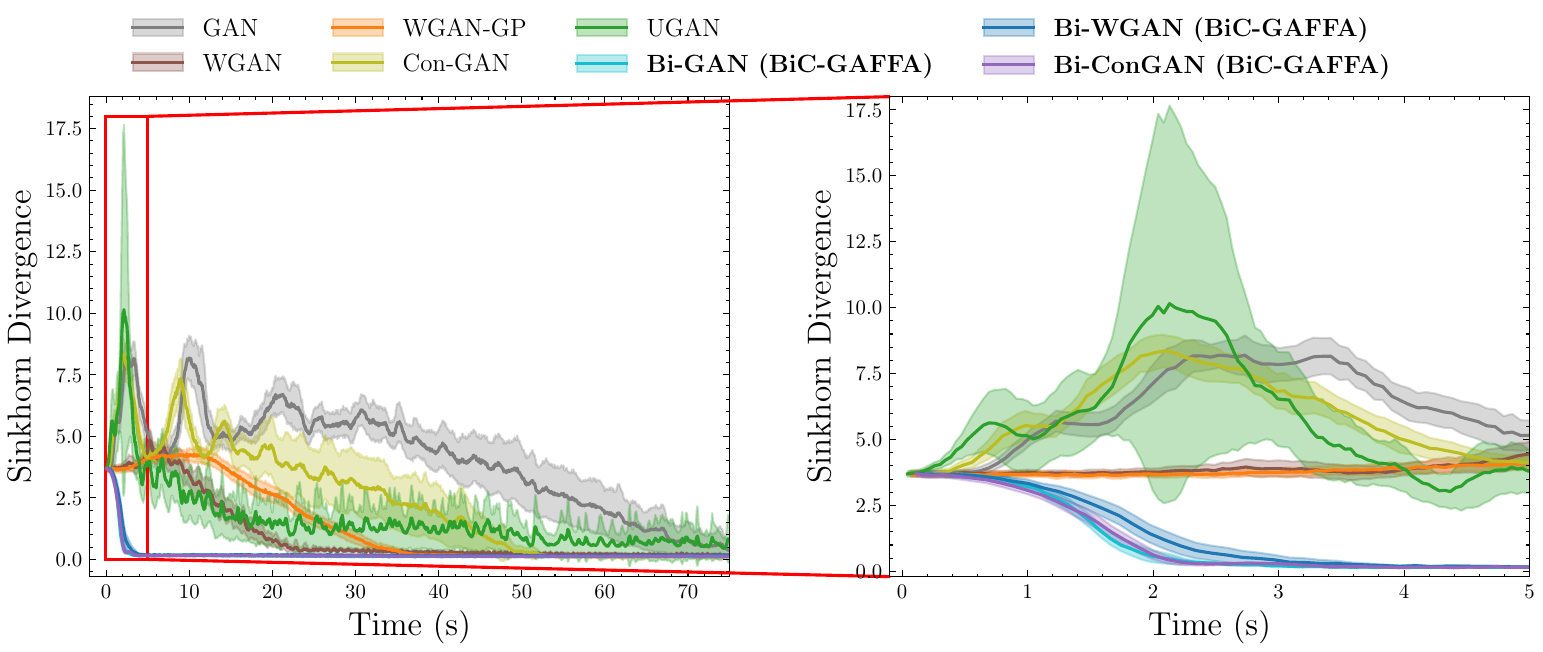}
	\end{minipage}
	\caption{How EMD changes w.r.t time for different GAN models. The true distribution in the left picture is the 8 Gaussians mixture model, and the true distribution in the right picture is the 25 Gaussians mixture model.}
	\label{fig:gans}
\end{figure}

For all the 8 GANs we make a comparison here, we use similar structures and similar parameters. The generator and discriminator are both three-layer neural networks with 128 hidden units, and the activation function except the last layer is ReLU. The output size of the generator is 256. For the WGAN and WGAN-GP, the last layer of the discriminator is the linear layer while other models have a sigmoid activation function after the linear layer in their discriminator.
The batch size is 512, and the number of iterations is 2701. According to the practical advice in \cite{gulrajani2017improved}, we train the discriminator 5 steps and then the generator 1 step in 1 loop. We use Adam with $b1 = 0.5, b2 = 0.999$ for all the learnable parameters in all the models. Other parameters are listed below:
\begin{itemize}
	\item GAN\cite{goodfellow2014generative}: we set learning rate of generator as $10^{-3}$, learning rate of discriminator as $10^{-4}$.
 	\item WGAN\cite{arjovsky2017wasserstein}: we set learning rate of generator as $10^{-3}$, learning rate of discriminator as $10^{-4}$, the clip value as 0.001;
	\item WGAN-GP\cite{gulrajani2017improved}: we set learning rate of generator as $10^{-4}$, learning rate of discriminator as $10^{-4}$, lambda of gradient penalty as 0.1; 
	\item Con-GAN\cite{chao2021constrained}: we set learning rate of generator as $10^{-3}$, learning rate of discriminator as $10^{-4}$, lambda of constant penalty as 0.3; 
	\item UGAN\cite{metz2016unrolled}: we set the learning rate of the generator as $10^{-3}$, the learning rate of the discriminator as $10^{-4}$, the unrolled step as 5 and let it update the discriminator just 1 step in 1 loop; 
	\item Bi-GAN (BiC-GAFFA): we set the learning rate of the generator as $10^{-3}$, the learning rate of the discriminator as $10^{-4}$,  $\rho = 0.1$, the learning rate of the auxiliary generator as $10^{-3}$;
	\item Bi-WGAN (BiC-GAFFA): we set the learning rate of the generator as $10^{-3}$, the learning rate of the discriminator as $10^{-4}$, $\rho=0.1$, the learning rate of the auxiliary generator as $10^{-3}$, the learning rate of the auxiliary dual variable as $10^{-4}$, the upper bound for the auxiliary dual variable as 0.1. Note here we turn the gradient penalty introduced in \cite{gulrajani2017improved} to one constraint by taking a maximum over the sample points, and by our practice, the original GAN structure seems more stable with this optimization method, further study is still required;
	\item Bi-ConGAN (BiC-GAFFA): we set the learning rate of the generator as $10^{-3}$, the learning rate of the discriminator as $10^{-4}$, $\rho=0.1$, the learning rate of the auxiliary generator as $10^{-3}$, the learning rate of the auxiliary dual variable as $10^{-4}$, the upper bound for the auxiliary dual variable as 0.1. We realize the constraint proposed in \cite{chao2021constrained} with $\varepsilon=0.1$.
\end{itemize}

Different analyses and assumptions lead to various regularization requirements for these models. 
When constrained optimizers are unavailable, such a regularization method can only be achieved by penalizing them to the objective\cite{gulrajani2017improved, chao2021constrained}, potentially compromising the interpretability of the model and complicating analysis. Our algorithms mitigate these issues without significant computational overhead. The effectiveness of our algorithms is demonstrated in the numerical results presented in Figures \ref{fig:gans}.

\subsection{Proofs in Section \ref{reformulation}}
\label{proofs1}

\subsubsection{Proof of Lemma \ref{lem0}}

Lemma \ref{lem0} can be proved using proof techniques similar to those in Theorem 3.3 from \cite{von2009optimization}. For completeness, we present an alternative proof of Lemma \ref{lem0} here.

\begingroup
\def\thetheorem{\ref{lem0}}
\begin{lemma}
	Assume that both $f(x,\cdot)$ and $g(x,\cdot)$ are convex for any given $x\in X$. Let $\gamma_1, \gamma_2 > 0$, we have $\mathcal{G}_{\gamma}(x,y,z) \ge 0$  for any $(x,y,z) \in X \times Y \times \mathbb{R}_+^p$. Furthermore,
	\[
	\mathcal{G}_{\gamma}(x,y,z) \le 0, 
	\]
	if and only if $ y \in S(x)$ and $z \in \mathcal{M}(x,y)$, where $\mathcal{M}(x,y)$ denotes the set of multipliers of the  lower-level problem at $(x,y)$, i.e., 
	\begin{equation*}
		\mathcal{M}(x,y) := \left\{ \lambda \in\mathbb{R}^p_+ ~|~ 0\in \nabla_{y} f(x,y) + \lambda^\mathup{T} \nabla_{y}g(x,y) + \mathcal{N}_Y(y), ~\lambda^\mathup{T} g(x,y) = 0\right\}.
	\end{equation*}
\end{lemma}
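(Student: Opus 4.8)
The plan is to exploit the fully separable structure of the maximization defining $\mathcal{G}_{\gamma}$. Since $\mathcal{L}(x,y,\lambda) = f(x,y) + \lambda^\mathup{T} g(x,y)$ depends on the maximization variable only through $\lambda$, while $\mathcal{L}(x,\theta,z)$ depends only on $\theta$, the bracketed objective splits additively, and I would write $\mathcal{G}_{\gamma}(x,y,z) = A(x,y,z) + B(x,y,z)$, where $A$ is the maximum over $\lambda \in \mathbb{R}^p_+$ of $f(x,y) + \lambda^\mathup{T} g(x,y) - \frac{1}{2\gamma_2}\|\lambda - z\|^2$ and $B$ is the maximum over $\theta \in Y$ of $-f(x,\theta) - z^\mathup{T} g(x,\theta) - \frac{1}{2\gamma_1}\|\theta - y\|^2$. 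These are exactly the two regularized subproblems whose unique solutions (unique by the strict concavity, resp.\ convexity, induced by the quadratic proximal terms) are $\lambda^*$ and $\theta^*$ from \eqref{def_tl*}. The nonnegativity claim is then immediate: evaluating the two inner problems at the admissible points $\lambda = z \in \mathbb{R}^p_+$ and $\theta = y \in Y$ gives $A \ge \mathcal{L}(x,y,z)$ and $B \ge -\mathcal{L}(x,y,z)$, whence $\mathcal{G}_{\gamma} = A + B \ge 0$.

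For the characterization, the key observation is that the two bounds $A \ge \mathcal{L}(x,y,z)$ and $B \ge -\mathcal{L}(x,y,z)$ always sum to $\mathcal{G}_{\gamma} \ge 0$, so the hypothesis $\mathcal{G}_{\gamma} \le 0$ forces \emph{both} to be tight simultaneously, i.e.\ $A = \mathcal{L}(x,y,z)$ and $B = -\mathcal{L}(x,y,z)$. By strict concavity/convexity, tightness is equivalent to $\lambda^* = z$ being the maximizer in $A$ and $\theta^* = y$ being the minimizer in $B$. I would then translate each into a first-order (normal-cone) optimality condition: $\lambda^* = z$ is optimal iff $g(x,y) \in \mathcal{N}_{\mathbb{R}^p_+}(z)$, which unwinds to primal feasibility $g(x,y) \le 0$ together with complementarity $z^\mathup{T} g(x,y) = 0$; and $\theta^* = y$ is optimal iff $0 \in \nabla_y f(x,y) + z^\mathup{T} \nabla_y g(x,y) + \mathcal{N}_Y(y)$, which is precisely the stationarity condition in the definition of $\mathcal{M}(x,y)$. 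The converse direction simply runs these equivalences backward: assuming $y \in S(x)$ and $z \in \mathcal{M}(x,y)$, the defining conditions of $\mathcal{M}$ (stationarity and complementarity) together with feasibility $g(x,y)\le 0$ reproduce exactly these normal-cone conditions, forcing $\lambda^* = z$ and $\theta^* = y$, hence $\mathcal{G}_{\gamma} = 0 \le 0$.

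The one step requiring genuine care — the main obstacle — is the final passage from the KKT-type conditions back to the membership $y \in S(x)$. Stationarity and complementarity alone are not sufficient for global optimality of a general lower-level problem; here I would invoke Assumptions that $f(x,\cdot)$ and $g(x,\cdot)$ are convex and $Y$ is convex, which makes the lower-level problem \eqref{LL_problem} a convex program and renders its KKT conditions \emph{sufficient} for global optimality. This is also the point that ensures the first-order conditions extracted from tightness of $A$ and $B$ are not merely necessary but genuinely equivalent to optimality of the regularized subproblems, so that the chain of equivalences closes in both directions.
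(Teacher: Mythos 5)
Your proposal is correct and follows essentially the same route as the paper's proof: nonnegativity is obtained by evaluating the two regularized subproblems at the admissible points $\lambda = z$ and $\theta = y$, and the case $\mathcal{G}_{\gamma}\le 0$ is characterized by forcing both resulting bounds to be tight simultaneously, i.e.\ $(y,z)$ is a fixed point of the two proximal subproblems in \eqref{def_tl*}. The only difference is cosmetic and lies in the final step: the paper passes from tightness to the statement that $(y,z)$ is a saddle point of $\min_{\theta\in Y}\max_{\lambda\in\mathbb{R}^p_+}\, f(x,\theta)+\lambda^{\mathup{T}}g(x,\theta)$ and invokes the classical minimax/duality theorem, whereas you unwind tightness directly into the KKT system (stationarity, primal feasibility, complementarity) and close via KKT sufficiency for convex programs --- two standard, interchangeable formulations of the same fact.
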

\addtocounter{theorem}{-1}
\endgroup
\begin{proof}
	For any $(x,y,z) \in X \times Y \times \mathbb{R}_+^p$, we have
	\begin{equation}\label{lem0_eq1}
		\begin{aligned}
			&\min_{\theta \in Y} \left\{ f(x, \theta) + z^\mathup{T} g(x, \theta) + \frac{1}{2\gamma_1}\| \theta - y\|^2 \right\} \\ 
			\le \,&  f(x, y) + z^\mathup{T} g(x, y) \\
			\le \, & \max_{\lambda \in \mathbb{R}^p_+} \left\{ f(x, y) + \lambda^\mathup{T} g(x, y) - \frac{1}{2\gamma_2}\| \lambda - z\|^2  \right\},
		\end{aligned}
	\end{equation}
	which implies that
	\[
	\begin{aligned}
		\mathcal{G}_{\gamma}(x,y,z) = \, &\max_{\lambda \in \mathbb{R}^p_+} \left\{ f(x, y) + \lambda^\mathup{T} g(x, y) - \frac{1}{2\gamma_2}\| \lambda - z\|^2  \right\} \\
		&- \min_{\theta \in Y} \left\{ f(x, \theta) + z^\mathup{T} g(x, \theta) + \frac{1}{2\gamma_1}\| \theta - y\|^2 \right\} \\
		 \ge \,& 0.
	\end{aligned}
	\]
	Therefore, $\mathcal{G}_{\gamma}(x,y,z) = 0$ if and only if 
	\[
	\max_{\lambda \in \mathbb{R}^p_+} \left\{ f(x, y) + \lambda^\mathup{T} g(x, y) - \frac{1}{2\gamma_2}\| \lambda - z\|^2  \right\} = \min_{\theta \in Y} \left\{ f(x, \theta) + z^\mathup{T} g(x, \theta) + \frac{1}{2\gamma_1}\| \theta - y\|^2 \right\}. 
	\]
	Then, \eqref{lem0_eq1} yields that $\mathcal{G}_{\gamma}(x,y,z) = 0$ if and only if
	\begin{equation}\label{lem0_eq2}
		\begin{aligned}
			y &\in \underset{\theta \in Y}{\mathrm{argmin}} \left\{ f(x, \theta) + z^\mathup{T} g(x, \theta) + \frac{1}{2\gamma_1}\| \theta - y\|^2 \right\}, \\
			z &\in \underset{\lambda \in \mathbb{R}^p_+}{\mathrm{argmax}} \left\{ f(x, y) + \lambda^\mathup{T} g(x, y) - \frac{1}{2\gamma_2}\| \lambda - z\|^2  \right\}.
		\end{aligned}
	\end{equation}
	Given that the function $f(x, \theta) + z^\mathup{T} g(x, \theta)$ is convex with respect to variable $\theta$, and that $\lambda^\mathup{T} g(x, y) $ is concave with respect to $\lambda$, \eqref{lem0_eq2} is equivalent to
	\begin{equation}
		\begin{aligned}
			y &\in \underset{\theta \in Y}{\mathrm{argmin}} \left\{ f(x, \theta) + z^\mathup{T} g(x, \theta) \right\}, \\
			z &\in \underset{\lambda \in \mathbb{R}^p_+}{\mathrm{argmax}} \left\{ f(x, y) + \lambda^\mathup{T} g(x, y)  \right\},
		\end{aligned}
	\end{equation}
	which is equivalent to that $(y,z)$ is a saddle point to 
	\[
	\min_{\theta \in Y} \max_{\lambda \in \mathbb{R}^p_+} ~ f(x, \theta) + \lambda^\mathup{T} g(x, \theta).
	\]
	Consequently, applying the classical minimax theorem to this convex-concave min-max problem, we obtain that $\mathcal{G}_{\gamma}(x,y,z) = 0$ if and only if $y \in S(x)$ and $z \in \mathcal{M}(x,y)$. 
\end{proof}

\subsubsection{Proof of Lemma \ref{smooth_G}}
\begingroup
\def\thetheorem{\ref{smooth_G}}
\begin{lemma}
	Assume that both $f(x,y)$ and $g(x,y)$ are convex in $y$ on $Y$ for any given $x\in X$ and are continuously differentiable on an open set containing $X \times Y$. Then $\mathcal{G}_{\gamma}(x,y,z)$ is continuously differentiable on $X \times Y \times \mathbb{R}^p_+$, and for any $(x,y,z) \in X \times Y \times \mathbb{R}^p_+$,
	\begin{equation*}
		\nabla \mathcal{G}_{\gamma}(x,y,z) 
		=	\begin{pmatrix}
			\nabla_x f(x,y) + (\lambda^*)^\mathup{T} \nabla_x g(x,y)  \\
			\nabla_y f(x,y) + (\lambda^*)^\mathup{T} \nabla_y g(x,y)  \\
			-\left(z - \lambda^*\right)/\gamma_2
		\end{pmatrix}-
		\begin{pmatrix}
			\nabla_x f(x,\theta^*) + z^\mathup{T} \nabla_x g(x,\theta^*) \\
			\left(y - \theta^*\right)/\gamma_1 \\
			g(x, \theta^*)
		\end{pmatrix},
	\end{equation*}
	where $\theta^*$ and $\lambda^*$ denote $\theta^*(x,y,z)$ and $\lambda^*(x,y,z)$, respectively, defined as
	\begin{equation*}
		\begin{aligned}
			\theta^*(x,y,z) &:= \underset{\theta \in Y}{\mathrm{argmin}}  \left\{ f(x, \theta) + z^\mathup{T} g(x, \theta) + \frac{1}{2\gamma_1}\| \theta - y\|^2  \right\}, \\
			\lambda^*(x,y,z) &:= \underset{\lambda \in \mathbb{R}^p_+}{\mathrm{argmax}} \left\{  f(x, y) + \lambda^\mathup{T} g(x, y) - \frac{1}{2\gamma_2}\| \lambda - z\|^2 \right\} = \mathrm{Proj}_{\mathbb{R}^p_+}\left( z + \gamma_2 g(x,y)  \right).
		\end{aligned}
	\end{equation*}
\end{lemma}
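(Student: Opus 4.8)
The plan is to exploit the fact that the objective inside the maximization defining $\mathcal{G}_\gamma$ separates completely in the two optimization variables $\theta$ and $\lambda$. Writing out $\mathcal{L}$, the bracketed expression equals $\big(f(x,y)+\lambda^\mathup{T}g(x,y)-\tfrac{1}{2\gamma_2}\|\lambda-z\|^2\big) - \big(f(x,\theta)+z^\mathup{T}g(x,\theta)+\tfrac{1}{2\gamma_1}\|\theta-y\|^2\big)$, in which the first group involves only $\lambda$ and the second only $\theta$. Hence the joint maximum splits and $\mathcal{G}_\gamma = P - Q$, where
\[
P(x,y,z):=\max_{\lambda\in\mathbb{R}^p_+}\Big\{ f(x,y)+\lambda^\mathup{T}g(x,y)-\tfrac{1}{2\gamma_2}\|\lambda-z\|^2\Big\},\quad
Q(x,y,z):=\min_{\theta\in Y}\Big\{ f(x,\theta)+z^\mathup{T}g(x,\theta)+\tfrac{1}{2\gamma_1}\|\theta-y\|^2\Big\}.
\]
I would then establish continuous differentiability of $P$ and $Q$ separately and obtain $\nabla\mathcal{G}_\gamma=\nabla P-\nabla Q$.

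For the maximization piece $P$, the inner objective is jointly $C^1$ and strongly concave in $\lambda$ with modulus $1/\gamma_2$ (its $\lambda$-Hessian is $-I/\gamma_2$), so the maximizer $\lambda^*(x,y,z)$ is unique. Setting the $\lambda$-gradient to zero and projecting onto $\mathbb{R}^p_+$ gives the closed form $\lambda^*=\mathrm{Proj}_{\mathbb{R}^p_+}(z+\gamma_2 g(x,y))$, which is continuous in $(x,y,z)$ since the projection is nonexpansive and $g$ is continuous. I would then invoke Danskin's theorem (the envelope theorem): because the maximizer is unique and continuous, $P$ is differentiable with $\nabla P(x,y,z)$ equal to the partial gradient of the inner objective evaluated at $\lambda=\lambda^*$. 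A routine computation gives the three components $\nabla_x f+(\lambda^*)^\mathup{T}\nabla_x g$, $\nabla_y f+(\lambda^*)^\mathup{T}\nabla_y g$, and $-(z-\lambda^*)/\gamma_2$, i.e.\ the first vector in \eqref{Ggradient}; continuity of $\lambda^*$ upgrades this to continuous differentiability.

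For the minimization piece $Q$, the restriction $z\in\mathbb{R}^p_+$ is essential: with $z\ge 0$ the map $\theta\mapsto f(x,\theta)+z^\mathup{T}g(x,\theta)$ is convex (a nonnegative combination of the convex components), so adding $\tfrac{1}{2\gamma_1}\|\theta-y\|^2$ makes the inner objective strongly convex in $\theta$ with modulus $1/\gamma_1$. This yields a \emph{unique} minimizer $\theta^*(x,y,z)$ even though $Y$ is only closed convex, the coercivity supplied by strong convexity guaranteeing existence without compactness. Applying Danskin's theorem to the minimization then gives $\nabla Q$ as the partial gradient of the inner objective at $\theta=\theta^*$, namely $\nabla_x f(x,\theta^*)+z^\mathup{T}\nabla_x g(x,\theta^*)$, $(y-\theta^*)/\gamma_1$, and $g(x,\theta^*)$ — the second vector in \eqref{Ggradient}. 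Subtracting the two gradients produces exactly the claimed formula.

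The step I expect to be the main obstacle is the rigorous justification of the envelope-theorem argument in the absence of compact feasible sets. I would handle the non-compactness of $\mathbb{R}^p_+$ and $Y$ by using strong concavity/convexity to confine $\lambda^*$ and $\theta^*$ to a bounded set on each neighborhood of a fixed point, then apply the classical Danskin theorem on that compact localization. Within this, establishing the \emph{continuity} of the implicitly defined minimizer $\theta^*$ is the one genuinely nontrivial ingredient (obtainable from standard Lipschitz-stability of strongly convex programs, or from Berge's maximum theorem on the localized compact set together with uniqueness); the continuity of $\lambda^*$ is immediate from its closed form, and the remaining partial-gradient evaluations are routine.
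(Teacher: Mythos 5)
Your proposal is correct and follows essentially the same route as the paper: the paper likewise splits $\mathcal{G}_\gamma$ into the max-over-$\lambda$ and min-over-$\theta$ pieces, uses the strong convexity/concavity induced by the proximal terms to get unique and stable optimizers, and invokes a Danskin-type envelope theorem (Theorem 4.13 and Remark 4.14 of Bonnans--Shapiro, whose inf-compactness hypothesis plays exactly the role of your localization-to-a-compact-set step). No substantive differences.
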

\addtocounter{theorem}{-1}
\endgroup
	\begin{proof}
	We first define two auxiliary functions,
	\[
	\begin{aligned}
		\mathcal{G}_{1,\gamma}(x,y,z) &:= \min_{\theta \in Y} \left\{  f(x, \theta) + z^\mathup{T} g(x, \theta) + \frac{1}{2\gamma_1}\| \theta - y\|^2 \right\}, \\
		\mathcal{G}_{2, \gamma}(x,y,z) &:= \max_{\lambda \in \mathbb{R}^p_+} \left\{ f(x, y) + \lambda^\mathup{T} g(x, y) - \frac{1}{2\gamma_2}\| \lambda - z\|^2 \right\}.
	\end{aligned}
	\]
	Then, it follows that $\mathcal{G}_{\gamma}(x,y,z) = \mathcal{G}_{2, \gamma}(x,y,z)- \mathcal{G}_{1, \gamma}(x,y,z)$. Since by assumptions that $f$ and $ g$ are both continuous differentiable on an open set containing $X \times Y$, it can be easily shown that $f(x, \theta) + z^\mathup{T} g(x, \theta) + \frac{1}{2\gamma_1}\| \theta - y\|^2$ satisfies the inf-compactness condition in Theorem 4.13 of \cite{bonnans2013} with respect to $\theta \in Y$ on any point $(\bar{x},\bar{y},\bar{z}) \in X \times Y \times \mathbb{R}^p_+$, i.e., for any $(\bar{x},\bar{y},\bar{z}) \in X \times Y \times \mathbb{R}^p_+$, there exist $c \in \mathbb{R}$, compact set $D$ and neighborhood $W$ of $(\bar{x},\bar{y},\bar{z})$ such that the level set $\{ \theta \in Y ~|~ f(x, \theta) + z^\mathup{T} g(x, \theta) + \frac{1}{2\gamma_1}\| \theta - y\|^2  \le c \}$ is nonempty and contained in $D$ for any $(x,y,z) \in W$. 
	And because of the convexity of $f(x,y)$ and $g(x,y)$ with respect to $y \in Y$ for any $x \in X$, $\underset{\theta \in Y}{\mathrm{argmin}}  \left\{ f(x, \theta) + z^\mathup{T} g(x, \theta) + \frac{1}{2\gamma_1}\| \theta - y\|^2  \right\}$ is a singleton for any $(x,y,z) \in X \times Y \times \mathbb{R}^p_+ $.
	Then, by the differentiablility of $f$ and $g$, we can derive from Theorem 4.13, Remark 4.14 of \cite{bonnans2013} that $\mathcal{G}_{1,\gamma}(x,y,z)$ is differentiable at any point on $X \times Y \times \mathbb{R}^p_+$ and for any $(x,y,z) \in X \times Y \times \mathbb{R}^p_+$,
	\begin{equation}\label{lem1_eq0}
		\nabla \mathcal{G}_{1,\gamma}(x,y,z) = \left(  \nabla_x f(x,\theta^*) + z^\mathup{T} \nabla_x g(x,\theta^*), \left(y - \theta^*\right)/\gamma_1, g(x, \theta^*)  \right).
	\end{equation}
	By using similar arguments, we can also demonstrate that 
	$\mathcal{G}_{2,\gamma}(x,y,z)$ is differentiable at any point on $X \times Y \times \mathbb{R}^p_+$ and for any $(x,y,z) \in X \times Y \times \mathbb{R}^p_+$,
	\begin{equation}\label{lem1_eq1}
		\nabla \mathcal{G}_{2,\gamma}(x,y,z) = \left(  \nabla_x f(x,y) + (\lambda^*)^\mathup{T} \nabla_x g(x,y), \nabla_y f(x,y) +( \lambda^*)^\mathup{T} \nabla_y g(x,y), -\left(z - \lambda^*\right)/\gamma_2 \right).
	\end{equation}
	And then the conclusion follows from $\mathcal{G}_{\gamma}(x,y,z) = \mathcal{G}_{2, \gamma}(x,y,z)- \mathcal{G}_{1, \gamma}(x,y,z)$.
\end{proof}

\subsubsection{Proof of Theorem \ref{thm_reformulate}}

\begingroup
\def\thetheorem{\ref{thm_reformulate}}
	\begin{theorem}
	Assume that both $f(x,\cdot)$ and $g(x,\cdot)$ are convex for any given $x\in X$. Let $\gamma_1, \gamma_2 > 0$, the reformulation \eqref{wVP} is equivalent to the bilevel optimization problem \eqref{general_problem}, provided that for any feasible point $(x,y)$ of \eqref{general_problem}, a corresponding multiplier of the lower-level problem \eqref{LL_problem} exists at $(x,y)$, i.e., $\mathcal{M}(x,y) \neq \varnothing$.
\end{theorem}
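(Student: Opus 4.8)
The plan is to reduce the equivalence entirely to the feasible-set characterization provided by Lemma~\ref{lem0}, exploiting the fact that the two problems share the identical objective $F(x,y)$, which does not involve the auxiliary variable $z$. The first step is to observe that, by Lemma~\ref{lem0}, one always has $\mathcal{G}_{\gamma}(x,y,z) \ge 0$ on $X \times Y \times \mathbb{R}_+^p$, so the single inequality constraint $\mathcal{G}_{\gamma}(x,y,z) \le 0$ appearing in \eqref{wVP} is in fact equivalent to the equality $\mathcal{G}_{\gamma}(x,y,z) = 0$. Invoking the second part of Lemma~\ref{lem0}, this equality holds precisely when $y \in S(x)$ and $z \in \mathcal{M}(x,y)$. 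Hence the feasible region of \eqref{wVP} is exactly $\{(x,y,z) : x \in X,\ y \in S(x),\ z \in \mathcal{M}(x,y)\}$.

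With this characterization in hand, I would establish the equivalence by a lifting/projection argument on the feasible sets. For the projection direction, any feasible $(x,y,z)$ of \eqref{wVP} satisfies $y \in S(x)$, so its $(x,y)$-component is feasible for \eqref{general_problem} and carries the same objective value $F(x,y)$. For the lifting direction, given any feasible $(x,y)$ of \eqref{general_problem}, the hypothesis $\mathcal{M}(x,y) \neq \varnothing$ supplies a multiplier $z \in \mathcal{M}(x,y)$, and then $(x,y,z)$ is feasible for \eqref{wVP} with the identical objective value. Combining the two directions shows that the feasible set of \eqref{general_problem} is exactly the image, under the projection $(x,y,z) \mapsto (x,y)$, of the feasible set of \eqref{wVP}, and that this projection preserves the objective. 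Consequently the two problems share the same optimal value, and their solution sets correspond: if $(x^*,y^*,z^*)$ solves \eqref{wVP} then $(x^*,y^*)$ solves \eqref{general_problem}, while if $(x^*,y^*)$ solves \eqref{general_problem} then $(x^*,y^*,z^*)$ solves \eqref{wVP} for any $z^* \in \mathcal{M}(x^*,y^*)$.

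I do not anticipate a genuine technical obstacle, since Lemma~\ref{lem0} already encodes the difficult convex-analytic content, namely the saddle-point and minimax reasoning behind the $\mathcal{G}_{\gamma} = 0$ characterization. The one point that must be handled with care is the role of the multiplier-existence assumption $\mathcal{M}(x,y) \neq \varnothing$: it is exactly what guarantees that the lifting direction is well defined. Without it, a feasible point $(x,y)$ of \eqref{general_problem} with $y \in S(x)$ but empty multiplier set would admit no $z$ achieving $\mathcal{G}_{\gamma}(x,y,z) \le 0$, so it would be lost in the reformulation and the optimal value of \eqref{wVP} could strictly exceed that of \eqref{general_problem}. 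Making explicit where this assumption enters, and confirming that it is invoked only in the lifting direction (the projection direction being unconditional), is the main item to get right in writing up the argument.
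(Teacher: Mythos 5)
Your argument is correct and follows essentially the same route as the paper's own proof: both directions reduce to the feasible-set characterization of Lemma~\ref{lem0}, with the projection $(x,y,z)\mapsto(x,y)$ handling one direction unconditionally and the hypothesis $\mathcal{M}(x,y)\neq\varnothing$ supplying the lift in the other. Your explicit remark on exactly where the multiplier-existence assumption is needed matches the paper's use of it.
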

\addtocounter{theorem}{-1}
\endgroup
\begin{proof}
	Let $(x,y,z)$ be any feasible point of problem \eqref{wVP}, then we have $(x,y) \in X \times Y$, $z \in \mathbb{R}^p_+$, $\mathcal{G}_{\gamma}(x,y,z)  \le 0$. And it follows from Lemma \ref{lem0} that $\mathcal{G}_{\gamma}(x,y,z) = 0$, $y \in S(x)$
	and thus $(x,y)$ is feasible to bilevel program \eqref{general_problem}.
	
	Now, suppose $(x,y)$ is an feasible point of bilevel program \eqref{general_problem}, then  we have $(x,y) \in X \times Y$ and $y \in S(x)$. 
	According to the assumption that  a multiplier $z \in \mathbb{R}^p_+$ of the lower-level problem \eqref{LL_problem} exists at $(x,y)$, i.e., $z \in \mathcal{M}(x,y)$. Then it follows from Lemma \ref{lem0} that $\mathcal{G}_{\gamma}(x,y,z) = 0$ and
	thus $(x,y,z) $ is feasible to reformulation problem \eqref{wVP}.
\end{proof}

\subsection{Proofs in Section \ref{algorithm}}
\label{proofs2}

\subsubsection{Proof of Proposition \ref{thm_reformulate2}}

\begingroup
\def\thetheorem{\ref{thm_reformulate2}}
	 \begin{proposition}
	Suppose $\gamma_1, \gamma_2 > 0$ and an optimal solution $(x^*, y^*, z^*)$ to \eqref{wVP}, with $z^* \in Z$, exists,
	then any optimal solution of \eqref{wVP2} is optimal to reformulation \eqref{wVP}.
\end{proposition}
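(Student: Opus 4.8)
The plan is to exploit the fact that \eqref{wVP2} is nothing but \eqref{wVP} with the feasible box for $z$ shrunk from $\mathbb{R}^p_+$ to $Z = [0,r]^p \subseteq \mathbb{R}^p_+$, while the objective $F(x,y)$ and the single constraint $\mathcal{G}_\gamma(x,y,z) \le 0$ are identical in both problems. Thus this is a pure restriction-of-feasible-region argument, and the only substantive input is the hypothesis that some optimal solution of the larger problem already lives in the smaller feasible set. I would first denote by $v$ and $v_Z$ the optimal values of \eqref{wVP} and \eqref{wVP2}, respectively.

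First I would establish the inequality $v_Z \ge v$. Since $Z \subseteq \mathbb{R}^p_+$, every $(x,y,z) \in X \times Y \times Z$ satisfying $\mathcal{G}_\gamma(x,y,z) \le 0$ is also feasible for \eqref{wVP}; hence the feasible set of \eqref{wVP2} is contained in that of \eqref{wVP}, and minimizing the same objective over a smaller set cannot decrease the optimal value, giving $v_Z \ge v$.

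Next I would establish the reverse inequality $v_Z \le v$ using the hypothesis. By assumption there is an optimal solution $(x^*,y^*,z^*)$ of \eqref{wVP} with $z^* \in Z$; this point satisfies $(x^*,y^*) \in X \times Y$, $z^* \in Z$, and $\mathcal{G}_\gamma(x^*,y^*,z^*) \le 0$, so it is feasible for \eqref{wVP2}. Consequently $v_Z \le F(x^*,y^*) = v$. Combining the two inequalities yields $v_Z = v$, i.e.\ the two reformulations share a common optimal value.

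Finally I would conclude. Let $(\bar x,\bar y,\bar z)$ be an arbitrary optimal solution of \eqref{wVP2}. By the containment of feasible sets noted above it is feasible for \eqref{wVP}, and it attains $F(\bar x,\bar y) = v_Z = v$, the optimal value of \eqref{wVP}; therefore it is optimal for \eqref{wVP}. I do not anticipate a genuine obstacle here, since the statement is a direct two-sided value comparison; the only point requiring care is simply to invoke the hypothesis $z^* \in Z$ at the right moment to get $v_Z \le v$, as without it the shrunken feasible region could in principle exclude all optimizers of \eqref{wVP}.
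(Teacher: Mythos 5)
Your proposal is correct and follows essentially the same argument as the paper's proof: feasible-set containment gives one inequality on optimal values, the hypothesis $z^* \in Z$ supplies the reverse inequality, and any optimizer of \eqref{wVP2} is then feasible for \eqref{wVP} and attains the common optimal value. No gaps.
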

\addtocounter{theorem}{-1}
\endgroup
\begin{proof}
	For any feasible point $(x,y,z)$ of problem \eqref{wVP2}, $(x,y,z)$ is also feasible to problem \eqref{wVP} and thus the optimal value of problem \eqref{wVP2} is larger or equal to that of problem \eqref{wVP}. Let $(x^*, y^*, z^*)$ be an optimal solution of reformulation problem \eqref{wVP} with $z^*$ belonging to the set $Z$, then $(x^*, y^*, z^*) $ is also feasible to problem \eqref{wVP2}. Therefore, the optimal value of problem \eqref{wVP2} is equal to that of problem \eqref{wVP}. Then, because any feasible point $(x,y,z)$ of problem \eqref{wVP2} is feasible to problem \eqref{wVP}, we get the conclusion.
\end{proof}

\subsubsection{Proof of Proposition \ref{lem1}}

\begingroup
\def\thetheorem{\ref{lem1}}
\begin{proposition}
Assume that $F(x,y)$ is bounded below by $\underline{F}$ on $X \times Y$. For any $\varepsilon > 0$, there exists $\bar{c} > 0$ such that any global solution $(x_c, y_c, z_c)$ to the penalty formulation \eqref{def_varphi-0} with penalty parameter $c \ge \bar{c}$ is also a global solution to the relaxed problem \eqref{wVP2_relaxed} with some relaxation parameter $\varepsilon_c$ satisfying $ \varepsilon_c \le \varepsilon$. Moreover, if $(x_c, y_c, z_c)$ is a local solution to the penalty formulation \eqref{def_varphi-0}, then it is also a local solution to the relaxed problem \eqref{wVP2_relaxed} with relaxation parameter $\varepsilon_c:= \mathcal{G}_{\gamma}(x_c, y_c, z_c)$.
\end{proposition}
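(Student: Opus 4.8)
The plan is to exploit two elementary but distinct consequences of the penalty structure: optimality of the penalized objective transfers directly to a constrained (relaxed) optimality statement with relaxation parameter equal to the attained gap value, while a uniform upper bound on the penalized optimal value forces that attained value to fall below $\varepsilon$ once $c$ is large. Throughout I will use the nonnegativity $\mathcal{G}_{\gamma} \ge 0$ on $X \times Y \times Z$ supplied by Lemma \ref{lem0}, together with $F \ge \underline{F}$.

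First I would treat the global case. Fix $\varepsilon > 0$, let $(x_c, y_c, z_c)$ be a global solution of \eqref{def_varphi-0}, and set $\varepsilon_c := \mathcal{G}_{\gamma}(x_c, y_c, z_c)$. I claim $(x_c,y_c,z_c)$ is automatically a global solution of the relaxed problem \eqref{wVP2_relaxed} with parameter $\varepsilon_c$, for any $c>0$: if some $(x,y,z) \in X \times Y \times Z$ satisfied $\mathcal{G}_{\gamma}(x,y,z) \le \varepsilon_c$ and $F(x,y) < F(x_c,y_c)$, then $F(x,y) + c\,\mathcal{G}_{\gamma}(x,y,z) < F(x_c,y_c) + c\,\varepsilon_c$, contradicting global optimality in \eqref{def_varphi-0}; note that $(x_c,y_c,z_c)$ is itself feasible, with $\mathcal{G}_{\gamma}(x_c,y_c,z_c)=\varepsilon_c$. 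The remaining content is to show $\varepsilon_c \le \varepsilon$ for $c$ large. For this I would compare against a fixed point $(\hat x,\hat y,\hat z) \in X \times Y \times Z$ with small gap value, e.g. any solution of the bilevel problem, which by Theorem \ref{thm_reformulate} yields $\mathcal{G}_{\gamma}(\hat x,\hat y,\hat z)=0$. Global optimality then gives $F(x_c,y_c) + c\,\varepsilon_c \le F(\hat x,\hat y)$, whence, using $F(x_c,y_c) \ge \underline{F}$,
\[
\varepsilon_c \le \frac{F(\hat x,\hat y) - \underline{F}}{c},
\]
so the choice $\bar c := (F(\hat x,\hat y) - \underline{F})/\varepsilon$ forces $\varepsilon_c \le \varepsilon$ whenever $c \ge \bar c$.

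Next the local case, which needs no largeness of $c$. Let $(x_c,y_c,z_c)$ be a local solution of \eqref{def_varphi-0}, so there is a neighborhood $N$ on which $F(x_c,y_c) + c\,\mathcal{G}_{\gamma}(x_c,y_c,z_c) \le F(x,y) + c\,\mathcal{G}_{\gamma}(x,y,z)$ for all $(x,y,z) \in N \cap (X \times Y \times Z)$. With $\varepsilon_c := \mathcal{G}_{\gamma}(x_c,y_c,z_c)$, take any such point that is feasible for the relaxed problem, i.e. $\mathcal{G}_{\gamma}(x,y,z) \le \varepsilon_c$. Then
\[
F(x_c,y_c) \le F(x,y) + c\big(\mathcal{G}_{\gamma}(x,y,z) - \varepsilon_c\big) \le F(x,y),
\]
since the bracketed term is nonpositive; as $(x_c,y_c,z_c)$ is itself feasible (with equality), it is a local solution of \eqref{wVP2_relaxed} with parameter $\varepsilon_c$.

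The routine parts are the two displayed inequality chains. The one step requiring care is the existence, in the global case, of the comparison point $(\hat x,\hat y,\hat z)$ whose gap value lies below $\varepsilon$: this is exactly where solvability of the bilevel problem enters, since by Theorem \ref{thm_reformulate} feasibility of $\mathcal{G}_{\gamma}\le 0$ produces a point with $\mathcal{G}_{\gamma}=0$, and without such a point the constant $\bar c$ cannot be produced. If instead one only assumes the relaxed problem is feasible for some parameter strictly below $\varepsilon$, the identical computation with $\mathcal{G}_{\gamma}(\hat x,\hat y,\hat z) \le \varepsilon' < \varepsilon$ and $\bar c := (F(\hat x,\hat y)-\underline F)/(\varepsilon - \varepsilon')$ still delivers $\varepsilon_c \le \varepsilon$.
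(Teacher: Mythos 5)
Your proposal is correct and follows essentially the same route as the paper's proof: compare the penalized optimal value against a feasible point of \eqref{wVP2} with zero gap to get $\varepsilon_c \le (F(\hat x,\hat y)-\underline F)/c$, then transfer optimality to the relaxed problem by the same contradiction argument. Your explicit remark that the argument silently requires a feasible comparison point (equivalently, $Z$-truncated feasibility of the reformulation) is a fair observation — the paper's proof makes the identical implicit assumption by opening with ``let $(\bar x,\bar y,\bar z)$ be a feasible point of \eqref{wVP2}'' — but it does not change the substance of the argument.
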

\addtocounter{theorem}{-1}
\endgroup
\begin{proof}
	Let $(\bar{x}, \bar{y}, \bar{z}) \in X \times Y \times Z$ be a feasible point to problem \eqref{wVP2} and $(x_c, y_c, z_c) \in X \times Y \times Z$ be a global solution of problem \eqref{def_varphi-0} with penalty parameter $c > 0$. We then have
	\[
	F(x_c,y_c) + c \, \mathcal{G}_{\gamma}(x_c, y_c, z_c) \le  F(\bar{x}, \bar{y}) + c \, \mathcal{G}_{\gamma}(\bar{x}, \bar{y}, \bar{z}) =  F(\bar{x}, \bar{y}),
	\]
	implying
	\[
	\mathcal{G}_{\gamma}(x_c, y_c, z_c) \le ( F(\bar{x}, \bar{y}) - \underline{F})/c.
	\]
	Thus, for any $\varepsilon > 0$, there exists $\bar{c} > 0$ such that for any $c \ge \bar{c}$, 
	\[
	\varepsilon_c := \mathcal{G}_{\gamma}(x_c, y_c, z_c) \le \varepsilon.
	\] 
	Next, we demonstrate that $(x_c, y_c, z_c)$ is a global solution to problem \eqref{wVP2_relaxed} with relaxation parameter $\varepsilon_c$. Assume, for the sake of contradiction, that there exists $(x,y,z) \in X \times Y \times Z$ with $\mathcal{G}_{\gamma}(x,y,z) \le \varepsilon_c$ and $ F(x,y) <  F(x_c,y_c)$. This leads to
	\[
	F(x,y) + c\, \mathcal{G}_{\gamma}(x,y,z) <  F(x_c,y_c) + c \varepsilon_c =  F(x_c,y_c) + c \, \mathcal{G}_{\gamma}(x_c, y_c, z_c),
	\]
	which contradicts the global optimality of  $(x_c, y_c, z_c) $to problem \eqref{def_varphi-0} with penalty parameter $c > 0$. 
	
	By analogous reasoning, the assertion concerning the local optimality of $(x_c, y_c, z_c)$ for problem \eqref{def_varphi-0} holds similarly.
\end{proof}

\subsubsection{Proof of Theorem \ref{thm1}}

\begingroup
\def\thetheorem{\ref{thm1}}
\begin{theorem}
	Assume that $X$ and $Y$ are closed and functions $F$, $f$ and $g$ are continuous on $X \times Y$. Suppose $c_k \rightarrow \infty$  and let
	\[
	(x_k,y_k,z_k) \in \underset{(x,y)\in X \times Y \times Z}{\mathrm{argmin}} F(x,y) + c_k \, \mathcal{G}_{\gamma}(x,y,z).
	\]
	Then, any accumulation point $(\bar{x}, \bar{y}, \bar{z})$ of the sequence $\{(x_k, y_k, z_k)\}$ is a solution to problem \eqref{wVP2}.
\end{theorem}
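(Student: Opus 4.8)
The plan is to run the classical quadratic/exact penalty convergence argument, leaning on two structural facts already at hand. The first is that $\mathcal{G}_{\gamma} \ge 0$ on $X \times Y \times \mathbb{R}_+^p$: although Lemma \ref{lem0} states this under convexity, the nonnegativity half in fact only uses the admissibility of $\theta = y$ and $\lambda = z$ in the inner maximization of \eqref{def_vg} (which gives a value of exactly $0$), so it holds under the weaker hypotheses of this theorem. The second is that $\mathcal{G}_{\gamma}$ is lower semicontinuous, being a pointwise supremum of functions that are jointly continuous in all variables once $f$ and $g$ are continuous. Throughout I would fix a subsequence with $(x_{k_j}, y_{k_j}, z_{k_j}) \to (\bar x, \bar y, \bar z)$; since $X$, $Y$ are closed and $Z = [0,r]^p$ is compact, the limit automatically satisfies $(\bar x, \bar y, \bar z) \in X \times Y \times Z$.

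First I would record the fundamental penalty inequality. Let $(x^*, y^*, z^*)$ be any solution of \eqref{wVP2} (the conclusion presupposes one exists); it is feasible, so $\mathcal{G}_{\gamma}(x^*, y^*, z^*) \le 0$, and together with $\mathcal{G}_{\gamma} \ge 0$ this forces $\mathcal{G}_{\gamma}(x^*, y^*, z^*) = 0$. By optimality of $(x_k, y_k, z_k)$ for the penalized problem,
\[
F(x_k, y_k) + c_k \, \mathcal{G}_{\gamma}(x_k, y_k, z_k) \le F(x^*, y^*) + c_k \, \mathcal{G}_{\gamma}(x^*, y^*, z^*) = F(x^*, y^*).
\]
Because $c_k > 0$ and $\mathcal{G}_{\gamma}(x_k, y_k, z_k) \ge 0$, this simultaneously yields $F(x_k, y_k) \le F(x^*, y^*)$ and $\mathcal{G}_{\gamma}(x_k, y_k, z_k) \le (F(x^*, y^*) - F(x_k, y_k))/c_k$.

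Next I would pass to the limit to establish feasibility of $(\bar x, \bar y, \bar z)$. Continuity of $F$ gives $F(x_{k_j}, y_{k_j}) \to F(\bar x, \bar y)$, so the numerator $F(x^*, y^*) - F(x_{k_j}, y_{k_j})$ stays bounded while $c_{k_j} \to \infty$, whence $\mathcal{G}_{\gamma}(x_{k_j}, y_{k_j}, z_{k_j}) \to 0$. Lower semicontinuity then delivers $\mathcal{G}_{\gamma}(\bar x, \bar y, \bar z) \le \liminf_j \mathcal{G}_{\gamma}(x_{k_j}, y_{k_j}, z_{k_j}) = 0$, and combined with $\mathcal{G}_{\gamma} \ge 0$ we obtain $\mathcal{G}_{\gamma}(\bar x, \bar y, \bar z) = 0$, so $(\bar x, \bar y, \bar z)$ is feasible for \eqref{wVP2}. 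Optimality is then immediate: taking limits in $F(x_{k_j}, y_{k_j}) \le F(x^*, y^*)$ gives $F(\bar x, \bar y) \le F(x^*, y^*)$, the optimal value of \eqref{wVP2}; a feasible point whose objective value does not exceed the optimal value is itself a solution.

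The inequality manipulations are routine, so the only step needing genuine care is the feasibility argument. The subtlety is that I must \emph{not} invoke continuity of $\mathcal{G}_{\gamma}$ (the differentiability in Lemma \ref{smooth_G} relies on convexity, which is not assumed in this theorem), and instead work only with lower semicontinuity of the supremum, coupling it with the boundedness of the penalized objective that forces $\mathcal{G}_{\gamma}(x_{k_j}, y_{k_j}, z_{k_j}) \to 0$. I would also note explicitly that the argument tacitly requires \eqref{wVP2} to be feasible, which is implicit in the statement that the accumulation point ``is a solution''.
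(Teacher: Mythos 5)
Your proposal is correct and follows essentially the same route as the paper: the paper's proof consists of noting that $\mathcal{G}_{\gamma}$ is lower semicontinuous (citing Lemma 2 of \cite{liu2020generic}) and then invoking the standard penalty-convergence argument (Theorem A.4 of \cite{liu2024moreau}), which is precisely the argument you write out in full — nonnegativity of $\mathcal{G}_{\gamma}$, the fundamental penalty inequality, forcing $\mathcal{G}_{\gamma}(x_{k_j},y_{k_j},z_{k_j})\to 0$, feasibility of the limit via lower semicontinuity, and optimality by passing to the limit in $F$. Your explicit observations that nonnegativity and lower semicontinuity hold without the convexity hypotheses, and that the statement tacitly presumes \eqref{wVP2} admits a solution, are accurate and consistent with the paper's (much terser) treatment.
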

\addtocounter{theorem}{-1}
\endgroup
\begin{proof}
	Applying the proof techniques used in Lemma 2 of \cite{liu2020generic}, we can establish that $\mathcal{G}_{\gamma}(x,y,z)$ is lower semi-continuous on $X \times Y \times Z$. The theorem's conclusion follows by employing the same proof techniques from Theorem A.4 of \cite{liu2024moreau}.
\end{proof}

\subsection{Proofs in Section \ref{analysis}}
\label{proofs3}

The proof of non-asymptotic convergence for BiC-GAFFA primarily hinges on establishing the sufficient descent property of the merit function defined as follows
\begin{equation*}
	V_k := \phi_{c_k}(x^k, y^k, z^k) +  C_{\theta} \left\|\theta^{k}  - \theta^*(x^k, y^k, z^k) \right\|^2,
\end{equation*}
where 
\begin{equation*}
	\phi_{c_k}(x,y,z) := \frac{1}{ c_k}\big(F(x,y) - \underline{F} \big) +   \mathcal{G}_{\gamma}(x,y,z),
\end{equation*}
and 
$$C_{\theta} := L_f +  r L_{g_1} + \frac{1}{\gamma_1} + L_g.$$

To establish the sufficiently decreasing property of the merit function, we initially derive several crucial auxiliary lemmas. These lemmas establish the Lipschitz continuity of $\theta^*(x,y,z)$ and $\lambda^*(x,y,z)$ (as detailed in Lemma \ref{lem_theta}), the Lipschitz continuity of $\nabla \mathcal{G}_{\gamma}(x,y,z)$ (as detailed in Lemma \ref{lem3}) and a  descent property with bounded errors for the function $\phi_{c_k}(x,y,z)$ at each iteration (as detailed in Lemma \ref{lem6}).

 \subsubsection{Auxiliary lemmas}

The Lipschitz properties of $\theta^(x,y,z)$, $\lambda^(x,y,z)$, and $\nabla \mathcal{G}_{\gamma}(x,y,z)$ are crucial for the convergence analysis. We establish these properties in the subsequent lemmas.

 	\begin{lemma}\label{lem_theta}
		 Under Assumptions \ref{assump-LL} and \ref{assump-LLC}, and let $\gamma_1 > 0$, $\gamma_2 > 0$, for any $({x}, {y}, z), (x', y', z') \in X \times Y \times Z$, the following inequalities hold
		\begin{equation}
			\begin{aligned}
				\| \theta^*(x,y,z) - \theta^*(x', y',z') \| &\le \left(\gamma_1L_f + \gamma_1 r L_{g_2}\right) \|x-x'\| + \|y-y'\| + \gamma_1L_g \|z - z'\| \\ &\le L_\theta \|({x}, {y}, z) - (x', y', z') \|, \\
				\|\lambda^*(x,y,z)- \lambda^*(x', y',z')\| &\le \gamma_2 L_g \| (x, y) - (x', y') \| + \|z - z'\| \\ &\le L_\lambda \|({x}, {y}, z) - (x', y', z') \|,
			\end{aligned}
		\end{equation}
		where $L_{\theta} := \sqrt{3} \max\{\gamma_1L_f + \gamma_1 r L_{g_2}, 1, \gamma_1L_g\}$ and $L_{\lambda} := \sqrt{2} \max\{\gamma_2 L_g, 1\}$.
	\end{lemma}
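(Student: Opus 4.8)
The plan is to treat the two solution mappings separately, because $\lambda^*(x,y,z)$ admits a closed form while $\theta^*(x,y,z)$ requires a perturbation argument for a parametric strongly convex program. The $\lambda^*$ bound I would obtain first, as it is essentially immediate. Starting from the explicit formula $\lambda^*(x,y,z) = \mathrm{Proj}_{\mathbb{R}^p_+}\!\left(z + \gamma_2 g(x,y)\right)$ recorded in Lemma \ref{smooth_G}, and using that Euclidean projection onto the convex cone $\mathbb{R}^p_+$ is nonexpansive, I get
\[
\|\lambda^*(x,y,z) - \lambda^*(x',y',z')\| \le \|(z-z') + \gamma_2(g(x,y) - g(x',y'))\| \le \|z-z'\| + \gamma_2 L_g\|(x,y)-(x',y')\|,
\]
where the last inequality invokes the $L_g$-Lipschitz continuity of $g$ from Assumption \ref{assump-LLC}(ii). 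This is exactly the first displayed estimate for $\lambda^*$; the consolidated bound with $L_\lambda = \sqrt{2}\max\{\gamma_2 L_g,1\}$ then follows by applying Cauchy--Schwarz to the sum $\gamma_2 L_g\|(x,y)-(x',y')\| + \|z-z'\|$ and using $\gamma_2^2 L_g^2 + 1 \le 2\big(\max\{\gamma_2 L_g,1\}\big)^2$.

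For $\theta^*$ I would write $\Phi_{x,y,z}(\theta) := f(x,\theta) + z^\mathup{T} g(x,\theta) + \frac{1}{2\gamma_1}\|\theta-y\|^2$. Since $z \ge 0$ on $Z$, Assumptions \ref{assump-LL}(i) and \ref{assump-LLC}(i) make $f(x,\cdot) + z^\mathup{T} g(x,\cdot)$ convex, so the proximal term renders $\Phi_{x,y,z}$ $\tfrac{1}{\gamma_1}$-strongly convex and $\theta^*$ uniquely defined. Writing $\theta_1 := \theta^*(x,y,z)$ and $\theta_2 := \theta^*(x',y',z')$, I would express the first-order optimality conditions as the variational inequalities $\langle \nabla_\theta \Phi_{x,y,z}(\theta_1),\, \theta_2 - \theta_1\rangle \ge 0$ and $\langle \nabla_\theta \Phi_{x',y',z'}(\theta_2),\, \theta_1 - \theta_2\rangle \ge 0$. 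Adding them and inserting $\pm\,\nabla_\theta \Phi_{x,y,z}(\theta_2)$, the strong monotonicity of $\nabla_\theta \Phi_{x,y,z}$ yields
\[
\tfrac{1}{\gamma_1}\|\theta_1 - \theta_2\|^2 \le \big\langle \nabla_\theta \Phi_{x,y,z}(\theta_2) - \nabla_\theta \Phi_{x',y',z'}(\theta_2),\; \theta_2 - \theta_1\big\rangle,
\]
whence $\|\theta_1 - \theta_2\| \le \gamma_1\,\|\nabla_\theta \Phi_{x,y,z}(\theta_2) - \nabla_\theta \Phi_{x',y',z'}(\theta_2)\|$ by Cauchy--Schwarz. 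This reduces the whole estimate to controlling a gradient mismatch at the single frozen point $\theta_2$.

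The remaining and most delicate step is to bound that mismatch term by term. Expanding gives
\[
\nabla_\theta \Phi_{x,y,z}(\theta_2) - \nabla_\theta \Phi_{x',y',z'}(\theta_2) = \big[\nabla_y f(x,\theta_2) - \nabla_y f(x',\theta_2)\big] + \big[z^\mathup{T}\nabla_y g(x,\theta_2) - (z')^\mathup{T}\nabla_y g(x',\theta_2)\big] + \tfrac{1}{\gamma_1}(y' - y).
\]
I would bound the $f$-term by $L_f\|x-x'\|$ using $L_f$-smoothness, and for the coupled middle term split $z^\mathup{T}\nabla_y g(x,\theta_2) - (z')^\mathup{T}\nabla_y g(x',\theta_2) = z^\mathup{T}[\nabla_y g(x,\theta_2) - \nabla_y g(x',\theta_2)] + (z-z')^\mathup{T}\nabla_y g(x',\theta_2)$, estimating the first piece via the $L_{g_2}$-Lipschitz continuity of $\nabla_y g$ together with the bound on $z$ afforded by $Z=[0,r]^p$ (which supplies the factor $r$), and the second piece via $\|\nabla_y g\|\le L_g$ (a consequence of $g$ being $L_g$-Lipschitz). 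This produces $(L_f + rL_{g_2})\|x-x'\| + L_g\|z-z'\| + \tfrac{1}{\gamma_1}\|y-y'\|$; multiplying by $\gamma_1$ gives the first $\theta^*$ inequality, and the consolidated bound with $L_\theta := \sqrt{3}\max\{\gamma_1 L_f + \gamma_1 r L_{g_2},\,1,\,\gamma_1 L_g\}$ follows from Cauchy--Schwarz over three blocks exactly as for $\lambda^*$. The main obstacle is precisely this term-by-term estimate of the coupled $z^\mathup{T}g$ contribution, where one must cleanly separate the perturbations in $x$ and in $z$ and extract the factor $r$ from the compactness of $Z$; everything else is routine once strong convexity has been exploited to reduce the problem to a fixed evaluation point.
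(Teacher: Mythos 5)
Your proposal is correct and follows essentially the same route as the paper: for $\theta^*$ you reduce the problem, via the optimality variational inequalities and the $\tfrac{1}{\gamma_1}$-strong monotonicity of $\nabla_\theta \Phi_{x,y,z}$, to bounding the gradient mismatch at the single frozen point $\theta_2$, and then estimate that mismatch term by term exactly as the paper does (with the same splitting of the coupled $z^\mathup{T} g$ contribution and the same use of $Z=[0,r]^p$ to extract the factor $r$). The only cosmetic difference is in the $\lambda^*$ bound, where you invoke the closed-form projection formula and nonexpansiveness directly rather than the monotonicity of $\mathcal{N}_{\mathbb{R}^p_+}$ in a variational inequality; these are equivalent arguments yielding the identical estimate.
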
	
	\begin{proof}
	To simplify notation, we denote $(x,y,z), (x', y',z')  \in X \times Y \times Z$ by $w$ and $w'$, respectively.
	Considering that $\theta^*(w)$ and $ \lambda^*(w)$ are optimal solutions to optimization problems in \eqref{def_tl*},
	it follows from the stationary conditions that
	\begin{equation}\label{lem_theta_eq1}
		\begin{aligned}
			0 &\in \nabla_{y} f( x, \theta^*(w) ) + z^\mathup{T} \nabla_{y} g(x, \theta^*(w))  + (\theta^*(w) - y)/\gamma_1 + \mathcal{N}_Y (\theta^*(w)), \\
			0 &\in - g(x, y) + ( \lambda^*(w) - z)/\gamma_2 + \mathcal{N}_{\mathbb{R}^p_+} ( \lambda^*(w)  ).
		\end{aligned}
	\end{equation}
	Same results apply to $\theta^*(w')$ and $ \lambda^*(w')$
	\begin{equation}\label{lem_theta_eq2}
		\begin{aligned}
	0 &\in \nabla_{y} f( x', \theta^*(w') ) + (z')^\mathup{T} \nabla_{y} g(x', \theta^*(w'))  + (\theta^*(w') - y')/\gamma_1 + \mathcal{N}_Y (\theta^*(w')), \\
	0 &\in - g(x', y') + ( \lambda^*(w') - z')/\gamma_2 + \mathcal{N}_{\mathbb{R}^p_+} ( \lambda^*(w')  ).
		\end{aligned}
	\end{equation}
Defining
\[
T(x,y,z,\theta) := \nabla_{\theta} \left(f(x, \theta) + z^\mathup{T} g(x, \theta) + \frac{1}{2\gamma_1}\| \theta - y\|^2\right).
\]
and exploiting the monotonicity of $\mathcal{N}_Y $, we have from \eqref{lem_theta_eq1} and \eqref{lem_theta_eq2} that
	\begin{equation}\label{lem_theta_eq3}
	\begin{aligned}
	&\big\langle - T(w, \theta^*(w))  + T(w, \theta^*(w')), \theta^*(w) - \theta^*(w')\big\rangle \\
	&+ 	\big\langle - T(w, \theta^*(w'))  + T(w', \theta^*(w')), \theta^*(w) - \theta^*(w')\big\rangle \ge 0
	\end{aligned}
	\end{equation}
	 Under Assumptions \ref{assump-LL} and \ref{assump-LLC}, and given that $\gamma_1 >0 $, it holds that for any $(x,y,z) \in X \times Y \times \mathbb{R}^p_+$,
	\[
	f(x, \theta) + z^\mathup{T} g(x, \theta) + \frac{1}{2\gamma_1}\| \theta - y\|^2
	\]
	is $\frac{1}{\gamma_1}$-strongly convex with respect to $\theta$. According to \cite{rockafellar2009variational}, $T(x,y,z,\theta)$ is $1/\gamma_1$-strongly monotone. Consequently, we have that
	\begin{equation}\label{lem_theta_eq4}
		\left\langle T(w, \theta^*(w)) - T(w, \theta^*(w')), \theta^*(w) - \theta^*(w')\right\rangle \ge \| \theta^*(w) - \theta^*(w') \|^2/\gamma_1.
	\end{equation}
	Under Assumptions \ref{assump-LL} and \ref{assump-LLC}, we establish that
	\[
	\begin{aligned}
	& \| - T(w, \theta^*(w'))  + T(w', \theta^*(w')) \| \\ 
		\le\, &\| \nabla_{y}f(x, \theta^*(w')) - \nabla_{y}f(x', \theta^*(w')) \| + \| z^\mathup{T} \nabla_{y} g(x, \theta^*(w')) 
		- (z')^\mathup{T} \nabla_{y} g(x', \theta^*(w')) \| + \|y-y'\|/\gamma_1 \\
		\le \, & L_f \| x - x'\| + \| z^\mathup{T} \nabla_{y} g(x, \theta^*(w')) - z^\mathup{T} \nabla_{y} g(x', \theta^*(w')) \| \\
		& + \| z^\mathup{T} \nabla_{y} g(x', \theta^*(w')) - (z')^\mathup{T} \nabla_{y} g(x', \theta^*(w')) \| + \|y-y'\|/\gamma_1 \\
		\le \, &L_f \| x - x'\| + r  L_{g_2} \|x-x'\| + L_g \|z - z'\| + \|y-y'\|/\gamma_1,
	\end{aligned}
	\]
	where the last inequality follows from the $L_{g_2}$-Lipschitz continuity of $ \nabla_{y} g$ on $X \times Y$, $\max_{x \in X, y \in Y} \| \nabla_{y} g(x, y) \| \le L_g$, $(x', \theta^*(w')) \in X \times Y$ and $z \in Z$.
	Combining this inequality with \eqref{lem_theta_eq3} and \eqref{lem_theta_eq4}, we have
	\[
	\| \theta^*(w) - \theta^*(w') \| \le \left(\gamma_1L_f + \gamma_1 r L_{g_2}\right) \|x-x'\| + \|y-y'\| + \gamma_1L_g \|z - z'\|. 
	\]
	Further, exploiting the monotonicity of $\mathcal{N}_{\mathbb{R}^p_+} $, we obtain from \eqref{lem_theta_eq1} and \eqref{lem_theta_eq2} that
	\begin{equation}
	\left\langle g(x,y) + (z -\lambda^*(w) )/\gamma_2 - g(x', y') - (z' -\lambda^*(w'))/\gamma_2, \lambda^*(w) - \lambda^*(w') \right\rangle  \ge 0.
	\end{equation}
	Then, invoking Assumption \ref{assump-LLC}, we have that
	\[
	\|\lambda^*(w) - \lambda^*(w')\| \le \gamma_2 L_g \| (x, y) - (x', y') \| + \|z - z'\|.
	\]
\end{proof}

	\begin{lemma}\label{lem3}
		Under Assumptions \ref{assump-LL} and \ref{assump-LLC},  assume that $X$, $Y$ are compact sets, and $\gamma_1 > 0$, $\gamma_2 > 0$. Then, there exists $L_\mathcal{G} > 0$ such that for any points $({x}, {y}, z), (x', y', z') \in X \times Y \times Z$,
		\[
		\| \nabla \mathcal{G}_{\gamma}(x,y,z) - \nabla \mathcal{G}_{\gamma}(x',y',z')\| 
		\le L_{\mathcal{G}} \| (x,y,z) - (x', y', z') \|,
		\]
		and  
		\[
		- \mathcal{G}_{\gamma}(x',y',z')  \le - \mathcal{G}_{\gamma}(x,y,z) - \langle \nabla \mathcal{G}_{\gamma}(x,y,z), (x', y', z') - (x,y,z) \rangle + \frac{L_{\mathcal{G}}}{2} \| (x,y,z) - (x', y', z') \|^2.
		\]
	\end{lemma}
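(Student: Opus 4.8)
The plan is to establish the gradient Lipschitz bound first and then deduce the quadratic inequality as a routine consequence. The natural starting point is the closed-form expression for $\nabla \mathcal{G}_{\gamma}$ supplied by Lemma \ref{smooth_G}: each of its three blocks is a finite sum of terms of one of the forms $\nabla_{\bullet} f(x,\cdot)$, $(\lambda^*)^\mathup{T}\nabla_{\bullet} g(x,y)$, $z^\mathup{T}\nabla_{\bullet} g(x,\theta^*)$, $(y-\theta^*)/\gamma_1$, $(z-\lambda^*)/\gamma_2$, and $g(x,\theta^*)$, where $\theta^*=\theta^*(x,y,z)$ and $\lambda^*=\lambda^*(x,y,z)$. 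Thus it suffices to show that each of these building blocks is Lipschitz on $X\times Y\times Z$ and then add the resulting constants to form $L_{\mathcal{G}}$.

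First I would record the uniform bounds that compactness provides. Since $X$, $Y$, and $Z=[0,r]^p$ are compact and $f$, $g$ are $C^1$, the quantities $\|\nabla f\|$, $\|\nabla g\|$, $\|g\|$, and $\|z\|$ are all bounded on $X\times Y\times Z$; Assumption \ref{assump-LLC} moreover gives $\|\nabla g\|\le L_g$ directly. The multiplier is bounded as well: because projection onto $\mathbb{R}^p_+$ is nonexpansive and fixes the origin, the formula in \eqref{def_tl*} yields $\|\lambda^*\|\le\|z\|+\gamma_2\|g(x,y)\|$, which is uniformly bounded on the compact set. These bounds are exactly what is needed to control the bilinear terms.

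The core step is then to bound differences of the building blocks between two points $w=(x,y,z)$ and $w'=(x',y',z')$. For the pieces that are linear in $\theta^*$ or $\lambda^*$ this is immediate from Lemma \ref{lem_theta}, which supplies the $L_\theta$- and $L_\lambda$-Lipschitz continuity of $\theta^*$ and $\lambda^*$. For the products I would use the splitting $a^\mathup{T}b-(a')^\mathup{T}b'=a^\mathup{T}(b-b')+(a-a')^\mathup{T}b'$: for instance $(\lambda^*)^\mathup{T}\nabla_x g(x,y)-(\lambda^{*\prime})^\mathup{T}\nabla_x g(x',y')$ is controlled by combining the boundedness of $\lambda^*$ with the $L_{g_1}$-Lipschitz continuity of $\nabla_x g$, and the boundedness of $\nabla_x g$ with the $L_\lambda$-Lipschitz continuity of $\lambda^*$. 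The terms evaluated at $(x,\theta^*)$, such as $\nabla_x f(x,\theta^*)$ and $z^\mathup{T}\nabla_x g(x,\theta^*)$, are handled identically after noting $\|(x,\theta^*)-(x',\theta^{*\prime})\|\le\|x-x'\|+L_\theta\|w-w'\|$, so that $L_f$-smoothness and the Lipschitz continuity of the gradients of $g$ transfer through the composition. Summing all pieces gives $\|\nabla\mathcal{G}_\gamma(w)-\nabla\mathcal{G}_\gamma(w')\|\le L_{\mathcal{G}}\|w-w'\|$.

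I expect the main obstacle to be the careful bookkeeping in the bilinear terms $(\lambda^*)^\mathup{T}\nabla g$ and $z^\mathup{T}\nabla g(x,\theta^*)$, where one must simultaneously track the Lipschitz continuity of the multiplier (or its argument) and of the gradient factor, and verify that every uniform bound invoked genuinely rests only on compactness together with the stated assumptions. Once the Lipschitz estimate is in hand, the second inequality is the standard descent lemma applied to the $C^1$ function $-\mathcal{G}_\gamma$, whose gradient is also $L_{\mathcal{G}}$-Lipschitz: since $X\times Y\times Z$ is convex, integrating $\nabla(-\mathcal{G}_\gamma)$ along the segment from $w$ to $w'$ and using the Lipschitz bound gives $-\mathcal{G}_\gamma(w')\le-\mathcal{G}_\gamma(w)-\langle\nabla\mathcal{G}_\gamma(w),w'-w\rangle+\tfrac{L_{\mathcal{G}}}{2}\|w-w'\|^2$, as claimed.
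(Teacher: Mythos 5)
Your proposal is correct and follows essentially the same route as the paper's proof: it invokes the gradient formula from Lemma \ref{smooth_G}, bounds $\lambda^*$ via the nonexpansiveness of $\mathrm{Proj}_{\mathbb{R}^p_+}$, controls each bilinear term with the splitting $a^\mathup{T}b-(a')^\mathup{T}b'=a^\mathup{T}(b-b')+(a-a')^\mathup{T}b'$ together with the Lipschitz bounds from Lemma \ref{lem_theta}, and obtains the quadratic inequality from the standard descent lemma (which the paper cites as Lemma 5.7 of \cite{beck2017first}). No gaps.
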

	\begin{proof}
	For conciseness,  we denote $(x,y,z), (x', y',z')  \in X \times Y \times Z$ by $w$ and $w'$, respectively.
	 Recalling from Lemma \ref{lem_theta}, we have 
	\begin{equation}\label{lem3_eq1}
		\| \theta^*(w) - \theta^*(w') \| \le L_\theta \| w - w' \|, \qquad
		\|\lambda^*(w) - \lambda^*(w') \| \le L_\lambda \| w - w' \|.
\end{equation}
As specified in \eqref{def_tl*}, the norm of $\lambda^*(w)$ is bounded above by
\[
\| \lambda^*(w) \| =  \| \mathrm{Proj}_{\mathbb{R}^p_+}\left( z + \gamma_2 g(x,y)  \right) \| \le \|  z + \gamma_2 g(x,y)\| \le r + \gamma_2 M_g,
 \qquad \forall\,w \in X \times Y \times Z,
\]
where $M_g := \max_{x \in X, y \in Y} \| g(x,y) \|$.
Drawing upon Lemma \ref{smooth_G}, Assumptions \ref{assump-LL} and \ref{assump-LLC}, and \eqref{lem3_eq1}, for any $ w, w' \in X \times Y \times Z$, we have
\[
\begin{aligned}
	&\| \nabla_x \mathcal{G}_{\gamma}(w) - \nabla_x \mathcal{G}_{\gamma}(w')\| \\
	\le \, & \| \nabla_x f(x, y) - \nabla_x f(x', y')  \| + \| \lambda^*(w)^\mathup{T} \nabla_x g(x, y) -  \lambda^*(w')^\mathup{T} \nabla_x g(x', y') \| \\
	& + \| \nabla_x f(x,\theta^*(w)) - \nabla_x f(x',\theta^*(w')) \| + \| z^\mathup{T} \nabla_x g(x,\theta^*(w)) - (z')^\mathup{T} \nabla_x g( x', \theta^*(w')) \|  \\
	\le \, & L_f \| (x,y) - (x',y') \| + L_f \| (x, \theta^*(w)) - (x', \theta^*(w'))  \| \\
	& + \| \lambda^*(w)^\mathup{T} \nabla_x g(x, y) -  \lambda^*(w)^\mathup{T}  \nabla_x g(x', y') \| +  \| \lambda^*(w)^\mathup{T} \nabla_x g(x', y') -  \lambda^*(w')^\mathup{T} \nabla_x g(x', y') \| \\
	& + \| z^\mathup{T} \nabla_x g(x,\theta^*(w)) - z^\mathup{T} \nabla_x g( x', \theta^*(w')) \| + \| z^\mathup{T} \nabla_x g(x',\theta^*(w')) - (z')^\mathup{T} \nabla_x g( x', \theta^*(w')) \| \\
	\le \, & L_f \| (x,y) - (x',y') \| + L_f \| x - x' \| + L_f L_\theta \| w- w'\| \\
	& + (r + \gamma_2 M_g) L_{g_1} \| (x,y) - (x',y') \| + L_gL_{\lambda} \|w - w'\| \\
	& + r L_{g_1} \| x - x'\| + r L_{g_1} L_{\theta}\|w - w'\| + L_g \| z - z'\|,
\end{aligned}
\]
where the last inequality follows from $\theta^*(w') \in Y$, $z \in Z$ and $\max_{x \in X, y \in Y} \| \nabla_{x} g(x,y) \| \le L_g$. Similarly, for gradients with respect to $y$ and $z$, for any $ w, w' \in X \times Y \times Z$, we have  
\[
\begin{aligned}
	&\| \nabla_y \mathcal{G}_{\gamma}(w) - \nabla_y \mathcal{G}_{\gamma}(w')\| \\
	\le \, 
	& \| \nabla_y f(x,y) - \nabla_y f(x',y') \| + \| \lambda^*(w)^\mathup{T} \nabla_y g(x,y) - \lambda^*(w)^\mathup{T} \nabla_y g(x', y') \| \\
	& +  \| \lambda^*(w)^\mathup{T} \nabla_y g(x', y') - \lambda^*(w')^\mathup{T} \nabla_y g(x', y') \| + \frac{1}{\gamma_1}\| y - y'\| + \frac{1}{\gamma_1}\| \theta^*(w) - \theta^*(w')\| \\
	\le \, & L_f \| (x,y) - (x',y') \| + (r + \gamma_2 M_g) L_{g_2} \| (x,y) - (x',y') \| \\
	& + L_gL_{\lambda} \|w - w'\| + \frac{1}{\gamma_1}\| y - y'\| + \frac{1}{\gamma_1} L_{\theta}\| w - w'\|, 
\end{aligned}
\]
where the last inequality follows from the fact that $\max_{x \in X, y \in Y} \| \nabla_{y} g(x,y) \| \le L_g$, and
\[
\begin{aligned}
	&\| \nabla_z \mathcal{G}_{\gamma}(w) - \nabla_z \mathcal{G}_{\gamma}(w')\| \\
	\le \, 
	& \frac{1}{\gamma_2}\| z - z' \| + \frac{1}{\gamma_2}\| \lambda^*(w) - \lambda^*(w')\| + \| g(x, \theta^*(w)) - g(x', \theta^*(w')) \| \\
	\le \, & \frac{1}{\gamma_2} \| z - z' \| + \frac{L_\lambda}{\gamma_2} \| w - w'\| + L_g\|x - x'\| + L_g L_\theta \| w - w' \|.
\end{aligned}
\]
The above inequalities together yields the existence of constant $L_{\mathcal{G}} > 0$ such that
\[
	\| \nabla \mathcal{G}_{\gamma}(x,y,z) - \nabla \mathcal{G}_{\gamma}(x',y',z')\| 
	\le L_{\mathcal{G}} \| (x,y,z) - (x', y', z') \|.
\]
	Then the conclusion follows from Lemma 5.7 of \cite{beck2017first}.
	\end{proof}

The update of $\theta^k$ constitutes a single gradient descent step to the minimization problem defined in \eqref{def_tl*}. The progress of this update is quantified in the subsequent lemma.

	\begin{lemma}\label{lem4}
		Under Assumptions \ref{assump-LL} and \ref{assump-LLC}, let $\gamma_1 > 0$, $\gamma_2 > 0$ and $\eta_k \in ( 0, 1/(L_f + r L_{g_2} + 1/\gamma_1) )$, then the sequence of $(x^k, y^k, z^k, \theta^k, \lambda^k)$ generated by Algorithm \ref{alg1} satisfies
		\begin{equation}\label{lem4_eq}
				\| \theta^{k+1} - \theta^*(x^k,y^k,z^k) \|^2
				  \le \, ( 1 - \eta_k/\gamma_1 ) \| \theta^{k} - \theta^*(x^k,y^k,z^k) \|^2 ,
		\end{equation}
	and
	\[
	\lambda^{k+1} = \lambda^*(x^k,y^k,z^k).
	\]
	\end{lemma}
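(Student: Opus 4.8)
The plan is to read the $\theta$-update as a single projected-gradient step on the strongly convex inner objective and invoke the contraction property of such a step, while the $\lambda$-identity is purely definitional.

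First I would dispose of the second claim. By the update rule in Algorithm \ref{alg1}, $\lambda^{k+1} = \mathrm{Proj}_{\mathbb{R}^p_+}(z^k + \gamma_2 g(x^k,y^k))$, which is exactly the closed-form expression for $\lambda^*(x^k,y^k,z^k)$ recorded in \eqref{def_tl*}; hence $\lambda^{k+1} = \lambda^*(x^k,y^k,z^k)$ with no further argument. For the first claim, introduce the inner objective $\varphi_k(\theta) := f(x^k,\theta) + (z^k)^{\mathup{T}} g(x^k,\theta) + \frac{1}{2\gamma_1}\|\theta - y^k\|^2$, whose unique minimizer over $Y$ is $\theta^* := \theta^*(x^k,y^k,z^k)$ (uniqueness from strong convexity, as already used in the proof of Lemma \ref{smooth_G}). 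Two structural facts drive everything. First, $\varphi_k$ is $\mu$-strongly convex with $\mu = 1/\gamma_1$, since $f(x^k,\cdot) + (z^k)^{\mathup{T}} g(x^k,\cdot)$ is convex (as $z^k \ge 0$ and $f,g$ are convex in $y$) and the proximal term contributes modulus $1/\gamma_1$. Second, $\nabla \varphi_k$ is $L$-Lipschitz with $L = L_f + r L_{g_2} + 1/\gamma_1$, combining the $L_f$-smoothness of $f$, the $L_{g_2}$-Lipschitzness of $\nabla_y g$ together with $z^k \in Z = [0,r]^p$, and the $1/\gamma_1$ from the quadratic. Crucially, $d_\theta^k = \nabla \varphi_k(\theta^k)$, so the update reads $\theta^{k+1} = \mathrm{Proj}_Y(\theta^k - \eta_k \nabla\varphi_k(\theta^k))$, whereas the optimality of $\theta^*$ gives the fixed-point relation $\theta^* = \mathrm{Proj}_Y(\theta^* - \eta_k \nabla\varphi_k(\theta^*))$.

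Next I would use the nonexpansiveness of $\mathrm{Proj}_Y$ to write $\|\theta^{k+1} - \theta^*\|^2 \le \|(\theta^k - \theta^*) - \eta_k(\nabla\varphi_k(\theta^k) - \nabla\varphi_k(\theta^*))\|^2$, set $u := \theta^k - \theta^*$ and $v := \nabla\varphi_k(\theta^k) - \nabla\varphi_k(\theta^*)$, and expand the right side as $\|u\|^2 - 2\eta_k\langle u,v\rangle + \eta_k^2\|v\|^2$. The target bound $(1-\eta_k/\gamma_1)\|u\|^2$ then reduces to establishing $\eta_k\|v\|^2 \le 2\langle u,v\rangle - \mu\|u\|^2$. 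The heart of the argument is the interpolation inequality for $\mu$-strongly convex, $L$-smooth functions, $\langle u,v\rangle \ge \frac{\mu L}{\mu+L}\|u\|^2 + \frac{1}{\mu+L}\|v\|^2$, which after substitution yields $2\langle u,v\rangle - \mu\|u\|^2 \ge \frac{2}{\mu+L}\|v\|^2 + \mu\frac{L-\mu}{\mu+L}\|u\|^2 \ge \frac{2}{\mu+L}\|v\|^2$. Thus the required inequality holds provided $\eta_k \le 2/(\mu+L)$, and since $\eta_k < 1/L$ together with $\mu \le L$ gives $\eta_k < 1/L \le 2/(\mu+L)$, the stated range $\eta_k \in (0, 1/(L_f + r L_{g_2} + 1/\gamma_1))$ suffices.

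The step I expect to be the main obstacle is obtaining the sharp contraction factor $1 - \eta_k/\gamma_1$ under the relatively generous step size $\eta_k < 1/L$: the naive combination of strong monotonicity $\langle u,v\rangle \ge \mu\|u\|^2$ with $\|v\| \le L\|u\|$ only yields contraction on the much smaller range $\eta_k \le \mu/L^2$, so the co-coercivity/interpolation inequality is genuinely needed rather than a convenience. I would either cite it as a standard property of strongly convex smooth functions or, for self-containedness, derive it by applying smoothness-based co-coercivity to the convex function $\varphi_k(\cdot) - \frac{\mu}{2}\|\cdot\|^2$. A minor bookkeeping point is confirming the Lipschitz constant $L = L_f + rL_{g_2} + 1/\gamma_1$ with the correct handling of $\|z^k\|$ on $Z$, consistent with the constant already appearing in Lemma \ref{lem_theta}.
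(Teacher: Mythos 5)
Your proof is correct and follows essentially the same route as the paper: both arguments identify the $\theta$-update as one projected-gradient step on the inner objective, which is $(1/\gamma_1)$-strongly convex and $(L_f + rL_{g_2} + 1/\gamma_1)$-smooth, and conclude the contraction with factor $1-\eta_k/\gamma_1$, while the $\lambda$-identity is read off from the closed form in \eqref{def_tl*}. The only difference is that the paper cites Theorem 10.29 of \cite{beck2017first} for the contraction, whereas you rederive it via the strong-convexity/smoothness interpolation (co-coercivity) inequality; your derivation, including the verification that $\eta_k < 1/L \le 2/(\mu+L)$, is a valid self-contained substitute for that citation.
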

	\begin{proof}
		Consider $(x^k, y^k, z^k) \in X \times Y \times Z$. For brevity, we denote $\theta^*(x^k, y^k, z^k)$ and $\lambda^*(x^k, y^k, z^k)$ by $\theta^*$ and $\lambda^*$, respectively. Under Assumptions \ref{assump-LL} and \ref{assump-LLC}, and given $\gamma_1 > 0$, the function
		\[
		f(x^k, \theta) + (z^k)^\mathup{T} g(x^k, \theta) + \frac{1}{2\gamma_1}\| \theta - y^k\|^2,
		\]
		is $\frac{1}{\gamma_1}$-strongly convex and $(L_f + r L_{g_2} + 1/\gamma_1)$-smooth with respect to $\theta$. invoking Theorem 10.29 of \cite{beck2017first}, we obtain
		\[
		\| \theta^{k+1} - \theta^* \|^2 \le \left( 1 - \eta_k/\gamma_1  \right) \| \theta^k - \theta^* \|^2.
		\]
		Additionally, according to \eqref{def_tl*}, it follows that $\lambda^{k+1} = \lambda^*$.
	\end{proof}

The update of variables $(x,y,z)$ in \eqref{update_xy} can be viewed as an inexact alternating proximal gradient step from  $(x^{k}, y^{k}, z^k)$ on solving $	\min_{ (x,y,z) \in X \times Y \times Z } \phi_{c_k}(x,y,z)$.
In the lemma below, we demonstrate that the function $\phi_{c_k}(x,y,z)$ exhibits a decreasing property with bounded errors at each iteration.

	\begin{lemma}\label{lem6}
		Under Assumptions \ref{assump-LL} and \ref{assump-LLC},  assume $X$, $Y$ are compact sets, and let $\gamma_1 > 0$, $\gamma_2 > 0$. Then the sequence of $(x^k, y^k, z^k, \theta^k, \lambda^k)$ generated by Algorithm \ref{alg1} satisfies
		\begin{equation}\label{lem6_eq}
			\begin{aligned}
	\phi_{c_k}(x^{k+1},y^{k+1}, z^{k+1}) \le \,& \phi_{c_k}(x^k, y^{k},z^k) - \left( \frac{1}{2\alpha_k} - \frac{L_{\phi_k}}{2} \right)\| (x^{k+1}, y^{k+1}, z^{k+1}) - (x^k, y^k, z^k) \|^2 \\
	&  + \frac{\alpha_k}{2} \left( L_f +  u_zL_{g_1} + \frac{1}{\gamma_1} + L_g \right)\left\| \theta^*(x^k, y^k, z^k) - \theta^{k+1} \right\|^2.
			\end{aligned}
		\end{equation}
		where $L_{\phi_k} := L_F/c_k + L_{\mathcal{G}}$.
	\end{lemma}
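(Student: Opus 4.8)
The plan is to read the $(x,y,z)$-update \eqref{update_xy} as one \emph{inexact} projected gradient step for minimizing $\phi_{c_k}$ over $X\times Y\times Z$, and then run the standard descent argument while carefully accounting for the inexactness of the search direction. Write $w^k:=(x^k,y^k,z^k)$ and $d^k:=(d_x^k,d_y^k,d_z^k)$. The first ingredient is smoothness: by Assumption \ref{assump-UL} $\nabla F$ is $L_F$-Lipschitz and by Lemma \ref{lem3} $\nabla\mathcal{G}_{\gamma}$ is $L_{\mathcal{G}}$-Lipschitz on $X\times Y\times Z$, so $\nabla\phi_{c_k}=\tfrac{1}{c_k}\nabla F+\nabla\mathcal{G}_{\gamma}$ is $L_{\phi_k}$-Lipschitz with $L_{\phi_k}=L_F/c_k+L_{\mathcal{G}}$. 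The descent lemma (Lemma 5.7 of \cite{beck2017first}) then gives the quadratic upper bound
\[
\phi_{c_k}(w^{k+1})\le \phi_{c_k}(w^k)+\big\langle\nabla\phi_{c_k}(w^k),\,w^{k+1}-w^k\big\rangle+\frac{L_{\phi_k}}{2}\|w^{k+1}-w^k\|^2 .
\]

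Next I would exploit the variational characterization of the projection. Since $w^{k+1}=\mathrm{Proj}_{X\times Y\times Z}(w^k-\alpha_k d^k)$, testing the obtuse-angle inequality against $w^k$ yields $\langle d^k,\,w^{k+1}-w^k\rangle\le-\tfrac{1}{\alpha_k}\|w^{k+1}-w^k\|^2$. Splitting the linear term as $\langle\nabla\phi_{c_k}(w^k),w^{k+1}-w^k\rangle=\langle d^k,w^{k+1}-w^k\rangle+\langle\nabla\phi_{c_k}(w^k)-d^k,w^{k+1}-w^k\rangle$ therefore converts the first piece into the negative term $-\tfrac1{\alpha_k}\|w^{k+1}-w^k\|^2$ and leaves the direction mismatch to be controlled.

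The heart of the argument is estimating $\nabla\phi_{c_k}(w^k)-d^k$. Comparing the gradient formula \eqref{Ggradient}, evaluated at $\theta^*(w^k),\lambda^*(w^k)$, with the definition \eqref{def_d} of $d^k$, the $\tfrac1{c_k}\nabla F$ contributions cancel exactly, and, crucially, by Lemma \ref{lem4} we have $\lambda^{k+1}=\lambda^*(w^k)$, so every term built from $\lambda^*$ cancels as well. Hence the only surviving discrepancy comes from replacing $\theta^*(w^k)$ by $\theta^{k+1}$, and a blockwise estimate using $L_f$-smoothness of $f$, the Lipschitz continuity of $\nabla g$ (constant $L_{g_1}$) and of $g$ (constant $L_g$), the uniform bound on $\|z^k\|$ over $Z$, and the explicit $\tfrac1{\gamma_1}$-term gives $\|\nabla\phi_{c_k}(w^k)-d^k\|\le\big(L_f+u_zL_{g_1}+\tfrac1{\gamma_1}+L_g\big)\|\theta^*(w^k)-\theta^{k+1}\|$. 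Applying Young's inequality to the cross term with weight $\tfrac1{\alpha_k}$, namely $\langle\nabla\phi_{c_k}(w^k)-d^k,w^{k+1}-w^k\rangle\le\tfrac1{2\alpha_k}\|w^{k+1}-w^k\|^2+\tfrac{\alpha_k}{2}\|\nabla\phi_{c_k}(w^k)-d^k\|^2$, merges the $-\tfrac1{\alpha_k}$ and $+\tfrac1{2\alpha_k}$ coefficients into the claimed $-(\tfrac1{2\alpha_k}-\tfrac{L_{\phi_k}}{2})$ factor and, after inserting the error estimate, produces the residual term governed by $\|\theta^*(w^k)-\theta^{k+1}\|^2$ with the aggregate constant matching $C_{\theta}$.

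I expect the main obstacle to be the bookkeeping in this blockwise error estimate: one must track each of the $x$-, $y$-, and $z$-blocks of \eqref{Ggradient} against \eqref{def_d}, add and subtract intermediate terms to isolate the $\theta$-dependence, and invoke the correct Lipschitz constant together with the uniform bounds (the $u_z$ bound on $\|z^k\|$ and $M_g$ on $\|g\|$ over the compact $Z$) so that the constants collapse to exactly $L_f+u_zL_{g_1}+\tfrac1{\gamma_1}+L_g$; care is also needed so that the factor accompanying $\|\theta^*(w^k)-\theta^{k+1}\|^2$ after Young's inequality lines up with the stated coefficient. Finally I would merely rewrite $\|\theta^*(x^k,y^k,z^k)-\theta^{k+1}\|$ to recover the display as stated.
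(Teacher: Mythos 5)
Your proposal is correct and follows essentially the same route as the paper's proof: descent lemma with $L_{\phi_k}=L_F/c_k+L_{\mathcal G}$ from Lemma \ref{lem3}, the obtuse-angle inequality for the projection step, cancellation of the $\lambda$-terms via $\lambda^{k+1}=\lambda^*(w^k)$ from Lemma \ref{lem4}, the blockwise bound $\|\nabla\phi_{c_k}(w^k)-d^k\|\le C_\theta\|\theta^*(w^k)-\theta^{k+1}\|$, and Young's inequality to absorb the mismatch. (The $u_z$ in the statement is a typo for $r$, the bound defining $Z=[0,r]^p$, exactly as you interpreted it.)
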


\begin{proof}
	For clarity, we denote $(x^k, y^k, z^k), (x^{k+1}, y^{k+1}, z^{k+1})$ as $w^{k}$ and $w^{k+1}$, respectively.
	Under the Assumptions \ref{assump-LL} and \ref{assump-LLC}, where $\nabla F$ and $\nabla f$ are $L_{F}$- and $L_{f}$-Lipschitz continuous on $X \times Y$, and applying Lemma 5.7 of \cite{beck2017first} and Lemma \ref{lem3}, we obtain
	\begin{equation}\label{lem6_eq1}
		\begin{aligned}
			\phi_{c_k}(x^{k+1},y^{k+1}, z^{k+1}) \le \,& \phi_{c_k}(x^k, y^{k},z^k) + \langle \nabla \phi_{c_k}(x^k, y^{k},z^k), (x^{k+1}, y^{k+1}, z^{k+1}) - (x^k, y^k, z^k) \rangle \\
			&+ \frac{L_{\phi_k}}{2}\| (x^{k+1}, y^{k+1}, z^{k+1}) - (x^k, y^k, z^k) \|^2,
		\end{aligned}
	\end{equation}
	with $L_{\phi_k} := L_F/c_k + L_{\mathcal{G}}$. Based on the update rule of variables $(x,y,z)$ in \eqref{update_xy}, the convexity of $X \times Y \times Z$ and the property of the projection operator $\mathrm{Proj}_{X \times Y \times Z}$, we have
	\[
	\big\langle (x^k, y^k, z^k) - \alpha_k( d_x^k, d_y^k, d_z^k) - (x^{k+1}, y^{k+1}, z^{k+1}), (x^k, y^k, z^k) - (x^{k+1}, y^{k+1}, z^{k+1})  \big\rangle \le 0,
	\]
	yielding
	\[
	\big\langle ( d_x^k, d_y^k, d_z^k ), (x^{k+1}, y^{k+1}, z^{k+1}) - (x^k, y^k, z^k) \big\rangle 
	\le -\frac{1}{\alpha_k} \| (x^{k+1}, y^{k+1}, z^{k+1}) - (x^k, y^k, z^k) \|^2.
	\]
	Integrating this inequality with \eqref{lem6_eq1}, we infer that
	\begin{equation}\label{lem6_eq2}
		\begin{aligned}
			\phi_{c_k}(x^{k+1},y^{k+1}, z^{k+1}) \le \,& \phi_{c_k}(x^k, y^{k},z^k) - \left( \frac{1}{\alpha_k} - \frac{L_{\phi_k}}{2} \right)\| (x^{k+1}, y^{k+1}, z^{k+1}) - (x^k, y^k, z^k) \|^2 \\
			& + \left\langle \nabla \phi_{c_k}(x^k, y^{k},z^k) - ( d_x^k, d_y^k, d_z^k ), (x^{k+1}, y^{k+1}, z^{k+1}) - (x^k, y^k, z^k) \right\rangle.
		\end{aligned}
	\end{equation}
	Furthermore, considering $w^k \in X \times Y \times Z$ and with $\theta^*(w^k), \theta^k \in Y$ for all $k$, and utilizing the formula of $\nabla \mathcal{G}_{\gamma}({x},{y},z)$ derived in Lemma \ref{smooth_G}, along with the definitions of $d_x^k$, $d_y^k$ and $d_z^k$ in \eqref{def_d} and $\lambda^{k+1} = \lambda^*(w^k)$ from Lemma \ref{lem4}, we can obtain from Assumptions \ref{assump-LL} and \ref{assump-LLC} that
	\begin{equation}\label{lem6_eq3}
		\begin{aligned}
			&\left\| \nabla \phi_{c_k}(x^k, y^{k}, z^k) - (d_x^k, d_y^k, d_z^k) \right\| \\
			\le \, &  \left\|  \nabla_{x}\mathcal{G}_{\gamma} (x^k, y^{k}, z^k) - \nabla_x f(x^k, y^k) - (\lambda^{k+1})^\mathup{T} \nabla_{x}g(x^k, y^k) + \nabla_x f(x^k, \theta^{k+1}) + (z^{k})^\mathup{T} \nabla_{x}g(x^k, \theta^{k+1}) \right\| \\
			& + \left\| \nabla_{y}\mathcal{G}_{\gamma} (x^k, y^{k}, z^k) - \nabla_y f(x^{k}, y^k) -( \lambda^{k+1})^\mathup{T} \nabla_{y}g(x^k, y^k) + (y^k - \theta^{k+1})/\gamma_1 \right\| \\
			& + \left\|  \nabla_{z}\mathcal{G}_{\gamma} (x^k, y^{k}, z^k) + (z^k - \lambda^{k+1} )/ \gamma_2 + g(x^k, \theta^{k+1}) \right\| \\
			\le \, & \left\| \lambda^*(w^k)^\mathup{T} \nabla_x g(x^k, y^k) - (\lambda^{k+1})^\mathup{T} \nabla_{x}g(x^k, y^k) \right\| + \left\| \nabla_x f(x^k, \theta^*(w^k)) - \nabla_x f(x^k, \theta^{k+1}) \right\| \\
			&  + \left \| (z^k)^\mathup{T} \nabla_x g(x^k, \theta^*(w^k)) - (z^{k})^\mathup{T} \nabla_{x}g(x^k, \theta^{k+1}) \right\| \\
			& + \left\| \lambda^*(w^k)^\mathup{T} \nabla_y g(x^k, y^k) - (\lambda^{k+1})^\mathup{T} \nabla_{y}g(x^k, y^k) \right\| + \frac{1}{\gamma_1}\left\| \theta^*(w^k) - \theta^{k+1} \right\| \\
			& + \frac{1}{\gamma_2}\left\| \lambda^*(w^k) - \lambda^{k+1} \right\| + \left\| g(x^k, \theta^*(w^k)) - g(x^k, \theta^{k+1}) \right\| \\
			\le \, &  L_f \left\|  \theta^*(w^k) - \theta^{k+1}\right\| + r L_{g_1} \left\|  \theta^*(w^k) - \theta^{k+1}\right\| + \frac{1}{\gamma_1} \left\| \theta^*(w^k) - \theta^{k+1} \right\| +  L_g \left\| \theta^*(w^k) - \theta^{k+1} \right\|  \\
			= \, &  \left( L_f +  r L_{g_1} + \frac{1}{\gamma_1} + L_g \right)\left\| \theta^*(w^k) - \theta^{k+1} \right\|.
		\end{aligned}
	\end{equation}
	This yields that
	\[
	\begin{aligned}
		&  \left\langle \nabla \phi_{c_k}(x^k, y^{k},z^k) - ( d_x^k, d_y^k, d_z^k ), (x^{k+1}, y^{k+1}, z^{k+1}) - (x^k, y^k, z^k) \right\rangle \\
		\le \,  & \frac{\alpha_k}{2} \left( L_f +  r L_{g_1} + \frac{1}{\gamma_1} + L_g \right)\left\| \theta^*(w^k) - \theta^{k+1} \right\|^2 + \frac{1}{2\alpha_k} \| (x^{k+1}, y^{k+1}, z^{k+1}) - (x^k, y^k, z^k) \|^2,
	\end{aligned}
	\]
	Combining this with \eqref{lem6_eq2} leads to the following inequality
	\begin{equation}\label{lem6_eq4}
		\begin{aligned}
			\phi_{c_k}(x^{k+1},y^{k+1}, z^{k+1}) \le \,& \phi_{c_k}(x^k, y^{k},z^k) - \left( \frac{1}{2\alpha_k} - \frac{L_{\phi_k}}{2} \right)\| (x^{k+1}, y^{k+1}, z^{k+1}) - (x^k, y^k, z^k) \|^2 \\
			&  + \frac{\alpha_k}{2} \left( L_f +  r L_{g_1} + \frac{1}{\gamma_1} + L_g \right)\left\| \theta^*(w^k) - \theta^{k+1} \right\|^2.
		\end{aligned}
	\end{equation}
\end{proof}

\subsubsection{Sufficient descent property of $V_k$}
Utilizing the auxiliary lemmas established previously, we now proceed to demonstrate the sufficient decreasing property of $V_k$.
	
	\begin{lemma}\label{lem9}
		Under Assumptions \ref{assump-UL}, \ref{assump-LL} and \ref{assump-LLC},  suppose  $X$, $Y$ are compact sets, $\gamma_1 > 0$, $\gamma_2 > 0$ and  $\eta_k \in (\underline{\eta}, 1/(L_f + r L_{g_2} + 1/\gamma_1) )$ with $ \underline{\eta} > 0$. Then there exists $c_{\alpha} > 0$ such that when $0<\alpha_k \le {c_\alpha}$, the sequence of $(x^k, y^k, z^k, \theta^k, \lambda^k)$ generated by Algorithm \ref{alg1} satisfies
		\begin{equation}\label{lem9_eq}
			\begin{aligned}
				V_{k+1} - V_k 
				\le \, & -  \frac{1}{4\alpha_k} \left\| w^{k+1} - w^k \right\|^2 - \frac{\eta_kC_\theta}{2 \gamma_1} \| \theta^{k} - \theta^*(w^k) \|^2.
			\end{aligned}
		\end{equation}
	\end{lemma}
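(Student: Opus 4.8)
The plan is to bound $V_{k+1}-V_k$ by tracking the change in the penalized objective $\phi_{c_k}$ and the change in the proximal tracking error $\|\theta^k-\theta^*(w^k)\|^2$ separately, and then to choose the step-size threshold $c_\alpha$ together with an auxiliary Young's-inequality parameter so that both contributions become negative with the desired coefficients. First I would write
\[
V_{k+1}-V_k = \big(\phi_{c_{k+1}}(w^{k+1})-\phi_{c_k}(w^k)\big) + C_\theta\big(\|\theta^{k+1}-\theta^*(w^{k+1})\|^2-\|\theta^k-\theta^*(w^k)\|^2\big).
\]
Since $F\ge\underline{F}$ and $c_{k+1}\ge c_k$ in both regimes under consideration (the constant $c$ of Theorem \ref{thm2} and the increasing $c_k$ of Theorem \ref{prop1}), the term $\tfrac{1}{c}(F-\underline{F})$ is nonincreasing in $c$, so $\phi_{c_{k+1}}(w^{k+1})\le\phi_{c_k}(w^{k+1})$. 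This lets me replace the first difference by $\phi_{c_k}(w^{k+1})-\phi_{c_k}(w^k)$, to which Lemma \ref{lem6} directly applies.

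Next I would substitute the descent inequality \eqref{lem6_eq} of Lemma \ref{lem6}, whose error coefficient is exactly $\tfrac{\alpha_k}{2}C_\theta$, and then apply the contraction \eqref{lem4_eq} of Lemma \ref{lem4}, namely $\|\theta^{k+1}-\theta^*(w^k)\|^2\le(1-\eta_k/\gamma_1)\|\theta^k-\theta^*(w^k)\|^2$, to that error term. To handle the shift of center in $V_{k+1}$, I would expand, for a free parameter $\beta>0$,
\[
\|\theta^{k+1}-\theta^*(w^{k+1})\|^2\le(1+\beta)\|\theta^{k+1}-\theta^*(w^k)\|^2+(1+1/\beta)\|\theta^*(w^k)-\theta^*(w^{k+1})\|^2,
\]
bound the last term by the Lipschitz estimate $\|\theta^*(w^k)-\theta^*(w^{k+1})\|\le L_\theta\|w^{k+1}-w^k\|$ from Lemma \ref{lem_theta}, and apply \eqref{lem4_eq} once more to the middle term. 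Collecting everything yields a $\|w^{k+1}-w^k\|^2$ coefficient of the form $-\tfrac{1}{2\alpha_k}+\tfrac{L_{\phi_k}}{2}+C_\theta(1+1/\beta)L_\theta^2$ and a $\|\theta^k-\theta^*(w^k)\|^2$ coefficient of the form $C_\theta\big[\tfrac{\alpha_k}{2}(1-\eta_k/\gamma_1)+(1+\beta)(1-\eta_k/\gamma_1)-1\big]$.

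Finally I would show these coefficients meet the targets in \eqref{lem9_eq}. Writing $a:=\eta_k/\gamma_1$, the $\theta$-coefficient requirement reduces to $(1-a)\big(\tfrac{\alpha_k}{2}+\beta\big)\le \tfrac{a}{2}$; because $\eta_k\in(\underline{\eta},1/(L_f+rL_{g_2}+1/\gamma_1))$ forces $a$ into a compact subinterval of $(0,1)$ bounded away from both endpoints, one may fix $\beta$ (e.g.\ $\beta\approx\underline{\eta}/(4\gamma_1)$ relative to the worst-case $a=\underline{\eta}/\gamma_1$) and then pick $c_\alpha$ small enough to absorb the $\tfrac{\alpha_k}{2}(1-a)$ slack uniformly in $k$. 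With $\beta$ now fixed, compactness of $X,Y$ (giving boundedness of $L_\mathcal{G}$, hence of $L_{\phi_k}=L_F/c_k+L_\mathcal{G}$ since $c_k\ge c>0$) makes $\tfrac{L_{\phi_k}}{2}+C_\theta(1+1/\beta)L_\theta^2$ a uniform constant, so shrinking $\alpha_k$ further guarantees $\tfrac{1}{4\alpha_k}$ dominates it and the $w$-coefficient falls below $-\tfrac{1}{4\alpha_k}$. Taking $c_\alpha$ to be the minimum of the two resulting thresholds closes the argument. I expect the main obstacle to be precisely this simultaneous, $k$-uniform calibration of $\beta$ and $c_\alpha$: the Young parameter $\beta$ trades off between tightening the $\theta$-contraction and inflating the $\|w^{k+1}-w^k\|^2$ penalty through the $1/\beta$ factor, so one must verify that a single admissible $\beta$ leaves enough room for a positive step-size window, which is what the strict separation of $\eta_k/\gamma_1$ from $0$ and $1$ provides.
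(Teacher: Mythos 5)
Your proposal is correct and follows essentially the same route as the paper's proof: the same decomposition of $V_{k+1}-V_k$ using the monotonicity $\phi_{c_{k+1}}(w^{k+1})\le\phi_{c_k}(w^{k+1})$, the descent inequality of Lemma \ref{lem6}, the contraction \eqref{lem4_eq}, the Lipschitz bound on $\theta^*$ from Lemma \ref{lem_theta}, and a Young's-inequality parameter calibrated against $\eta_k/\gamma_1$ before shrinking $\alpha_k$. The only cosmetic difference is that the paper takes the Young parameter as $\epsilon_k=\eta_k/(4\gamma_1)$ (varying with $k$, then bounded uniformly via $\eta_k>\underline{\eta}$) whereas you fix a single $\beta$ up front; the resulting conditions and the final threshold $c_\alpha=\min\{\underline{\eta}/(2\gamma_1),1/(4C_\alpha)\}$ are the same.
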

	\begin{proof}
		For clarity and conciseness, we represent $(x^k, y^k, z^k), (x^{k+1}, y^{k+1}, z^{k+1})$ as $w^{k}$ and $w^{k+1}$, respectively.
		Recall \eqref{lem6_eq} from Lemma \ref{lem6} that
		\begin{equation}\label{lem9_eq1}
			\begin{aligned}
			\phi_{c_k}(x^{k+1},y^{k+1}, z^{k+1}) \le \,& \phi_{c_k}(x^k, y^{k},z^k) - \left( \frac{1}{2\alpha_k} - \frac{L_{\phi_k}}{2} \right)\| (x^{k+1}, y^{k+1}, z^{k+1}) - (x^k, y^k, z^k) \|^2 \\
			&  + \frac{\alpha_k}{2} \left( L_f +  r L_{g_1} + \frac{1}{\gamma_1} + L_g \right)\left\| \theta^*(x^k, y^k, z^k) - \theta^{k+1} \right\|^2.
			\end{aligned}
		\end{equation}
		Given that $c_{k+1} \ge c_k$,
		it follows that $ (F(x^{k+1},y^{k+1})- \underline{F} )/c_{k+1} \le (F(x^{k+1},y^{k+1})- \underline{F} )/c_{k}$. 
		Combining this with \eqref{lem9_eq1} leads to
		\begin{equation}\label{lem9_eq4}
			\begin{aligned}
				V_{k+1} - V_k =\, & \phi_{c_{k+1}}(w^{k+1}) - \phi_{c_k}(w^k) + C_{\theta}  \left\| \theta^{k+1} - \theta^*(w^{k+1}) \right\|^2 -  C_{\theta} \left\| \theta^{k} - \theta^*(w^k) \right\|^2 \\
				\le\, & \phi_{c_{k}}(w^{k+1}) - \phi_{c_k}(w^k) + C_{\theta}  \left\| \theta^{k+1} - \theta^*(w^{k+1}) \right\|^2 -  C_{\theta} \left\| \theta^{k} - \theta^*(w^k) \right\|^2\\
				\le \, &- \left( \frac{1}{2\alpha_k} - \frac{L_{\phi_k}}{2} \right)\| w^{k+1} - w^k \|^2  + C_{\theta}  \left\| \theta^{k+1} - \theta^*(w^{k+1}) \right\|^2 -  C_{\theta} \left\| \theta^{k} - \theta^*(w^k) \right\|^2 \\
				& + \frac{\alpha_k}{2} \left( L_f +  r L_{g_1} + \frac{1}{\gamma_1} + L_g \right)\left\| \theta^*(w^k) - \theta^{k+1} \right\|^2 \\
				= \, &- \left( \frac{1}{2\alpha_k} - \frac{L_{\phi_k}}{2} \right)\| w^{k+1} - w^k \|^2  + C_{\theta}  \left\| \theta^{k+1} - \theta^*(w^{k+1}) \right\|^2 -  C_{\theta} \left\| \theta^{k} - \theta^*(w^k) \right\|^2 \\
				& + \frac{\alpha_k}{2} C_\theta \left\| \theta^*(w^k) - \theta^{k+1} \right\|^2 \\
			\end{aligned}
		\end{equation}
	where the last equation follows from defining $C_{\theta} := L_f +  r L_{g_1} + \frac{1}{\gamma_1} + L_g $.
		Further, we can demonstrate that
		\begin{equation*}
		\begin{aligned}
			& \left\| \theta^{k+1} - \theta^*(w^{k+1}) \right\|^2 -  \left\| \theta^{k} - \theta^*(w^k) \right\|^2 + \frac{\alpha_k}{2} \left\| \theta^*(w^k) - \theta^{k+1} \right\|^2   \\
			\le \, & (1+\epsilon_k+ \alpha_k/2 )\left\| \theta^{k+1} - \theta^*(w^k) \right\|^2 - \left\| \theta^{k} - \theta^*(w^k) \right\|^2 + (1+ \frac{1}{\epsilon_k}) \| \theta^*(w^{k+1}) - \theta^*(w^k) \|^2 \\
			\le \, & (1+\epsilon_k+  \alpha_k/2)( 1 - \eta_k/\gamma_1 )
			\| \theta^{k} - \theta^*(w^k) \|^2 -  \left\| \theta^{k} - \theta^*(w^k) \right\|^2 + (1+ \frac{1}{\epsilon_k}) L_{\theta}^2 \left\| w^{k+1} - w^k \right\|^2,
		\end{aligned}
	\end{equation*}
		for any $\epsilon_k >0$, where the second inequality is a consequence of Lemmas \ref{lem_theta} and \ref{lem4}. 
		By setting $\epsilon_k = \frac{1}{4}\eta_k/\gamma_1$ in the above inequality, we obtain that when $\alpha_k \le \frac{1}{2}\eta_k/\gamma_1$, the following inequalities hold
		\begin{equation*}
			(1+\epsilon_k+  \alpha_k/2 )(1 - \eta_k/\gamma_1) \le
			(1+\frac{1}{2}\eta_k/\gamma_1 )(1 - \eta_k/\gamma_1) \le 1-\frac{\eta_k}{2\gamma_1}.
		\end{equation*}
Consequently, we establish the inequality
		\begin{equation}\label{lem9_eq5}
			\begin{aligned}
				&   \left\| \theta^{k+1} - \theta^*(w^{k+1}) \right\|^2 -  \left\| \theta^{k} - \theta^*(w^k) \right\|^2 + \frac{\alpha_k}{2} \left\| \theta^*(w^k) - \theta^{k+1} \right\|^2  \\
				\le \, & 
				- \frac{\eta_k}{2\gamma_1} \| \theta^{k} - \theta^*(w^k) \|^2  + \left(1+ \frac{4\gamma_1}{\eta_k} \right) L_\theta^2  \left\| w^{k+1} - w^k \right\|^2.
			\end{aligned}
		\end{equation}
			Combining \eqref{lem9_eq4} and \eqref{lem9_eq5} implies 
		\begin{equation}\label{lem9_eq6}
			\begin{aligned}
				V_{k+1} - V_k 
				\le \, &- \left[ \frac{1}{2\alpha_k} - \frac{L_{\phi_k}}{2} - \left(1+ \frac{4\gamma_1}{\eta_k} \right) L_\theta^2 C_ \theta \right] \left\| w^{k+1} - w^k \right\|^2 - \frac{\eta_kC_\theta}{2 \gamma_1}  \| \theta^{k} - \theta^*(w^k) \|^2.
			\end{aligned}
		\end{equation}
		When $c_{k+1} \ge c_k$, $\eta_k \ge \underline{\eta} > 0$, $\alpha_k \le \frac{1}{2}\underline{\eta}/ \gamma_1$, it holds that for any $k$, $\alpha_k \le \frac{1}{2} \eta_k / \gamma_1$, 
		\[
		\begin{aligned}
			&  \frac{L_{\phi_k}}{2} + \left(1+ \frac{4\gamma_1}{\eta_k} \right) L_\theta^2 C_ \theta 
			 \le \frac{L_{\phi_0}}{2} + \left(1+ \frac{4\gamma_1}{\underline{\eta} } \right)  L_\theta^2 C_{\theta} =: C_\alpha.
		\end{aligned}
		\]
		If $c_\alpha >0$ satisfies
		\[
		c_{\alpha} \le \min \left\{\frac{1}{2}\underline{\eta}/ \gamma_1, \frac{1}{4C_\alpha}  \right\}, 
		\]
		then, for $0 < \alpha_k \le c_\alpha$, it is guaranteed that
		\[
		\frac{1}{2\alpha_k}  - \frac{L_{\phi_k}}{2} - \left(1+ \frac{4\gamma_1}{\eta_k} \right) L_\theta^2 C_ \theta \ge \frac{1}{4\alpha_k}.
		\] 
		Therefore, the conclusion follows directly from \eqref{lem9_eq6}.
	\end{proof}

 \subsubsection{Proofs of Theorem \ref{thm2} and Theorem \ref{prop1}}

Indeed, Theorem \ref{thm2} is a special case of Theorem  \ref{prop1} with $\rho = 0$. Consequently, we present the proof of Theorem \ref{prop1}.

\begingroup
\def\thetheorem{\ref{prop1}}
	\begin{theorem}
	Under Assumptions \ref{assump-UL}, \ref{assump-LL} and \ref{assump-LLC},  assume that $X$, $Y$ are compact sets,  $\gamma_1 > 0$, $\gamma_2 > 0$, $c_k =c(k+1)^\rho$ with $c > 0$, $\rho \in [0,1/2)$ and $\eta_k \in (\underline{\eta}, 1/(L_f + r L_{g_2} + 1/\gamma_1) )$ with $ \underline{\eta} > 0$.  Then there exists $c_{\alpha} > 0$ such that when $\alpha_k \in ( \underline{\alpha}, {c_\alpha})$ with $\underline{\alpha} > 0$, the sequence of $(x^k, y^k, z^k,\theta^k, \lambda^k)$ generated by Algorithm \ref{alg1} satisfies
\[
\min_{0 \le k \le K} \left \| \theta^{k} - \theta^*(x^k, y^k, z^k) \right\|  = O\left(\frac{1}{K^{1/2}} \right),
\]
and 
\[
\min_{0 \le k \le K} R_k(x^{k+1}, y^{k+1}, z^{k+1}) = O\left(\frac{1}{K^{(1-2\rho)/2}} \right).
\]
Furthermore, if $\rho > 0$ and $\psi_{c_k}(x^k,y^k,z^k)$ is uniformly bounded above, then the sequence of $(x^k,y^k,z^k)$ satisfies 
	\begin{equation*}
		0\leq \mathcal{G}_{\gamma}(x^K,y^K,z^K)=O\left(\frac{1}{K^p} \right).
	\end{equation*}
\end{theorem}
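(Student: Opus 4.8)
The whole argument rests on telescoping the one-step descent inequality of Lemma~\ref{lem9} and converting the resulting summable quantities into rates through the elementary bound $\min_{0\le k\le K} a_k \le \tfrac{1}{K+1}\sum_{k=0}^{K} a_k$ valid for nonnegative $a_k$. First I would observe that $V_k\ge 0$: indeed $\phi_{c_k}=\tfrac{1}{c_k}(F-\underline F)+\mathcal{G}_\gamma\ge 0$ because $F\ge\underline F$ and $\mathcal{G}_\gamma\ge 0$ by Lemma~\ref{lem0}, while the term $C_\theta\|\theta^k-\theta^*(w^k)\|^2$ is nonnegative. Summing \eqref{lem9_eq} over $k=0,\dots,K$ then telescopes to
\[
\sum_{k=0}^{K}\Big(\tfrac{1}{4\alpha_k}\|w^{k+1}-w^k\|^2+\tfrac{\eta_kC_\theta}{2\gamma_1}\|\theta^k-\theta^*(w^k)\|^2\Big)\le V_0-V_{K+1}\le V_0.
\]
Using $\alpha_k<c_\alpha$ and $\eta_k>\underline\eta$ to replace the coefficients by uniform positive constants, both $\sum_k\|w^{k+1}-w^k\|^2$ and $\sum_k\|\theta^k-\theta^*(w^k)\|^2$ are bounded by a constant multiple of $V_0$; the $\theta$-rate $\min_{0\le k\le K}\|\theta^k-\theta^*(w^k)\|=O(K^{-1/2})$ is then immediate from the averaging bound.

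The central and most delicate step is the residual estimate. Writing the projection step \eqref{update_xy} through the optimality condition of $\mathrm{Proj}_{X\times Y\times Z}$ yields the inclusion $\tfrac{1}{\alpha_k}(w^k-w^{k+1})-d^k\in\mathcal{N}_{X\times Y\times Z}(w^{k+1})$, so that
\[
\mathrm{dist}\big(0,\nabla\phi_{c_k}(w^{k+1})+\mathcal{N}_{X\times Y\times Z}(w^{k+1})\big)\le\tfrac{1}{\alpha_k}\|w^{k+1}-w^k\|+\|\nabla\phi_{c_k}(w^{k+1})-d^k\|.
\]
I would split the last term as $\|\nabla\phi_{c_k}(w^{k+1})-\nabla\phi_{c_k}(w^k)\|+\|\nabla\phi_{c_k}(w^k)-d^k\|$, bounding the first by $L_{\phi_k}\|w^{k+1}-w^k\|$ via the Lipschitz estimate of Lemma~\ref{lem3}, and the second by $C_\theta\|\theta^*(w^k)-\theta^{k+1}\|$ exactly as in the inexact-gradient computation \eqref{lem6_eq3}. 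The key bridge to $R_k$ is that $\nabla\psi_{c_k}=c_k\nabla\phi_{c_k}$ and that $\mathcal{N}_{X\times Y\times Z}$ is a cone, whence $R_k(w^{k+1})=c_k\,\mathrm{dist}(0,\nabla\phi_{c_k}(w^{k+1})+\mathcal{N}_{X\times Y\times Z}(w^{k+1}))$. Combining this with the contraction $\|\theta^{k+1}-\theta^*(w^k)\|\le\|\theta^k-\theta^*(w^k)\|$ from Lemma~\ref{lem4} gives
\[
R_k(w^{k+1})\le c_k\big[(L_{\phi_k}+1/\alpha_k)\|w^{k+1}-w^k\|+C_\theta\|\theta^k-\theta^*(w^k)\|\big].
\]

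Since $c_k=c(k+1)^\rho$ is increasing, $c_k\le c_K=O(K^\rho)$ for $k\le K$, and $L_{\phi_k}+1/\alpha_k$ is bounded uniformly because $\alpha_k>\underline\alpha$ and $L_{\phi_k}=L_F/c_k+L_\mathcal{G}$ is nonincreasing; squaring, summing, and applying the averaging bound together with the two summability estimates yields $\min_{0\le k\le K}R_k(w^{k+1})^2=O(K^{2\rho}/K)$, i.e. the claimed $O(K^{-(1-2\rho)/2})$ rate. Finally, for the last assertion I would simply rearrange $\psi_{c_K}(x^K,y^K,z^K)=F(x^K,y^K)+c_K\mathcal{G}_\gamma(x^K,y^K,z^K)$: nonnegativity of $\mathcal{G}_\gamma$ is Lemma~\ref{lem0}, and using $F\ge\underline F$ together with the assumed uniform upper bound $\psi_{c_K}\le M$ gives $\mathcal{G}_\gamma(x^K,y^K,z^K)\le(M-\underline F)/c_K=O(K^{-\rho})$. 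The main obstacle is the residual bound: it requires carefully passing from the inexact update direction $d^k$ (built from $\theta^{k+1},\lambda^{k+1}$) to the exact gradient $\nabla\phi_{c_k}(w^{k+1})$ and then rescaling by $c_k$ through the cone structure, all while keeping every constant uniform in $k$ so that the only growth is the explicit $c_k=O(K^\rho)$ factor.
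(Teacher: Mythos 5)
Your proposal is correct and follows essentially the same route as the paper: telescoping the descent inequality of Lemma~\ref{lem9} using $V_k\ge 0$, extracting the residual bound $R_k(w^{k+1})\le c_k\big[(L+1/\alpha_k)\|w^{k+1}-w^k\|+C_\theta\|\theta^k-\theta^*(w^k)\|\big]$ from the projection optimality condition together with the inexact-gradient estimate \eqref{lem6_eq3} and Lemma~\ref{lem4}, and finishing the last claim by rearranging the uniform bound on $\psi_{c_k}$. The only cosmetic difference is the final averaging step — you bound $c_k\le c_K=O(K^\rho)$ and use an unweighted average, whereas the paper keeps the weights $1/c_k^2$ and lower-bounds $\sum_k 1/c_k^2$ by an integral — but both yield the identical $O(K^{-(1-2\rho)/2})$ rate.
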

\addtocounter{theorem}{-1}
\endgroup

 	\begin{proof}
		Lemma \ref{lem9} establishes the existence of $c_\alpha > 0$ such that \eqref{lem9_eq} holds when $\alpha_k \le c_\alpha$.
		Summing \eqref{lem9_eq} over $k = 0, 1, \ldots, K-1$, we obtain
		\begin{equation}\label{prop1_eq1}
			\begin{aligned}
				&\sum_{k = 0}^{K-1} \Bigg( \frac{1}{4\alpha_k} \| (x^{k+1}, y^{k+1}, z^{k+1}) - (x^k, y^k, z^k)  \|^2 + \frac{ \underline{\eta} C_\theta}{2 \gamma_1} \| \theta^{k} - \theta^*(x^k, y^k, z^k) \|^2 \Bigg)\\
				 \le\, & V_0 - V_K \le  V_0,
			\end{aligned}
		\end{equation}
		where the last inequality follows from the nonnegativity of  $V_K$. Consequently, it holds that
		\[
		\sum_{k = 0}^{\infty}  \| \theta^{k} - \theta^*(x^k, y^k, z^k) \|^2  < \infty, 
		\]
		and 
		\[
		 \min_{0 \le k \le K} \left \| \theta^{k} - \theta^*(x^k, y^k, z^k) \right\| = O\left(\frac{1}{K^{1/2}} \right).
		\]
		
		According to the update rule for variables $(x,y,z)$ in \eqref{update_xy}, we derive  
		\begin{equation}
			\begin{aligned}
				0 &\in c_k(d_x^k, d_y^k, d_z^k ) + \mathcal{N}_{X \times Y \times Z}(x^{k+1}, y^{k+1}, z^{k+1}) + \frac{c_k}{\alpha_k} \left((x^{k+1}, y^{k+1}, z^{k+1}) - (x^k, y^k, z^k)\right).
			\end{aligned}
		\end{equation}
	Following the definitions of $d_x^k$, $d_y^k$ and $d_z^k$ in \eqref{def_d}, we obtain
	\[
	e^k \in \nabla \psi_{c_k}(x^{k+1}, y^{k+1}, z^{k+1}) + \mathcal{N}_{X \times Y \times Z}(x^{k+1}, y^{k+1}, z^{k+1}),
	\]
	where
	\begin{equation}
			e^k := \nabla \psi_{c_k}(x^{k+1}, y^{k+1}, z^{k+1}) - c_k(d_x^k, d_y^k, d_z^k) - \frac{c_k}{\alpha_k} \left((x^{k+1}, y^{k+1}, z^{k+1}) - (x^k, y^k, z^k)\right) .
	\end{equation}
Next, we estimate $\|e^k \|$. We have
\[
\begin{aligned}
	\| e^k \| \le\,& \| \nabla \psi_{c_k}(x^{k+1}, y^{k+1},z^{k+1}) - \nabla \psi_{c_k}(x^{k}, y^{k}, z^{k}) \| + \| \nabla \psi_{c_k}(x^{k}, y^{k}, z^k) - c_k(d_x^k, d_y^k, d_z^k) \| \\
	& + \frac{c_k}{\alpha_k} \left\| (x^{k+1}, y^{k+1}, z^{k+1}) - (x^k, y^k, z^k)  \right\|.
\end{aligned}
\]
For the first term in the right hand side of the above inequality, by using Assumptions \ref{assump-UL}, \ref{assump-LL} and \ref{assump-LLC},  along with Lemma \ref{lem_theta}, we establish the existence of $L_{\psi} := L_F + L_{\mathcal{G}} > 0$ such that
\[
\| \nabla \psi_{c_k}(x^{k+1}, y^{k+1},z^{k+1})  -  \nabla \psi_{c_k}(x^{k}, y^{k},z^k) \| \le c_k L_{\psi} \| (x^{k+1}, y^{k+1},z^{k+1}) - (x^{k}, y^{k}, z^k) \|.
\]
Using \eqref{lem6_eq3} and Lemma \ref{lem4}, we have
\begin{equation}
	\begin{aligned}
	 \|  \nabla \psi_{c_k}(x^{k}, y^{k}, z^k) - c_k(d_x^k, d_y^k, d_z^k) \|  = \,& c_k\left\| \nabla \phi_{c_k}(x^k, y^{k},z^k)- (d_x^k, d_y^k, d_z^k) \right\| \\ 
		\le \, & c_kC_{\theta} \left\| \theta^*(x^{k}, y^{k}, z^k) - \theta^{k+1} \right\| \\
	\le \, & c_kC_{\theta} \left\| \theta^*(x^{k}, y^{k}, z^k) - \theta^{k} \right\|. 
	\end{aligned}
\end{equation}
with $C_{\theta} =  L_f +  r L_{g_1} + \frac{1}{\gamma_1} + L_g  $.
Consequently, we have
\[
\begin{aligned}
\| e^k \| \le \, &c_k L_{\psi} \| (x^{k+1}, y^{k+1},z^{k+1}) - (x^{k}, y^{k}, z^k) \| + \frac{c_k}{\alpha_k} \left\| (x^{k+1}, y^{k+1}, z^{k+1}) - (x^k, y^k, z^k)  \right\| \\
& +  c_kC_{\theta} \left\| \theta^*(x^{k}, y^{k}, z^k) - \theta^{k} \right\|.
\end{aligned}
\]
Using this bound on $\|e^k \| $, we have that
\[
\begin{aligned}
	R_k(x^{k+1}, y^{k+1}, z^{k+1}) \le \, &c_k \left(L_{\psi} + 1/\alpha_k \right) \| (x^{k+1}, y^{k+1},z^{k+1}) - (x^{k}, y^{k}, z^k) \| \\
	& +  c_kC_{\theta} \left\| \theta^*(x^{k}, y^{k}, z^k) - \theta^{k} \right\|.
\end{aligned}
\]
Utilizing this inequality, and given that $\alpha_k \in ( \underline{\alpha}, {c_\alpha})$ for some positive constant $\underline{\alpha}$, we can establish the existence of a constant $C_R > 0$ such that 
		\begin{equation}
			\begin{aligned}
				&\frac{1}{c_k^2} 	R_k(x^{k+1}, y^{k+1}, z^{k+1})^2 \\ \le \, & C_R \Bigg(\frac{1}{4\alpha_k} \| (x^{k+1}, y^{k+1},z^{k+1}) - (x^{k}, y^{k}, z^k) \|^2 + \frac{ \underline{\eta} C_\theta}{2 \gamma_1} \| \theta^{k} - \theta^*(x^k, y^k, z^k) \|^2 \Bigg).
			\end{aligned}
		\end{equation}
		 This inequality, combined with \eqref{prop1_eq1}, implies that
		\begin{equation}\label{prop1_eq2}
			\sum_{k = 0}^{\infty}	\frac{1}{c_k^2} R_k(x^{k+1}, y^{k+1}, z^{k+1})^2 < \infty.
		\end{equation}
		Because $2\rho < 1$, it follows that 
		\[
		\sum_{k = 0}^{K} \frac{1}{c_k^2}  =  \sum_{k = 0}^{K} \left(\frac{1}{k+1} \right)^{2\rho}\frac{1}{c^2} \ge  \left(\int_{1}^{K+2}\frac{1}{x^{^{2\rho}}}d\,x\right) \frac{1}{c^2}= \left( \frac{(K+2)^{1-2\rho} - 1}{1- 2\rho}\right)\frac{1}{c^2},
		\]
		leading us to conclude from  \eqref{prop1_eq2} that
		\[
		 \min_{0 \le k \le K} R_k(x^{k+1}, y^{k+1}, z^{k+1}) = O\left(\frac{1}{K^{(1-2\rho)/2}} \right).
		\]
		Given that $ \psi_{c_k}(x^k, y^k,z^k) \le M$ and $F(x^k, y^k) \ge \underline{F}$ for all $k$, it follows that
		\[
		c_k  \mathcal{G}_{\gamma}(x^k,y^k,z^k)  \le M - \underline{F},  \quad \forall k.
		\]
		From $c_k ={c} (k+1)^\rho$ and $\rho > 0$,  we can obtain that
		\[
	 \mathcal{G}_{\gamma}(x^K,y^K,z^K) = O\left(\frac{1}{K^{\rho}} \right).
		\]
	\end{proof}

 \subsection{ Extension to bilevel optimization with minimax lower-level problem }
\label{proofs4}

In this part, we explore the extension of our proposed gradient-based, single-loop, Hessian-free algorithm, originally designed for bilevel optimization problems with constrained lower-level problems, to bilevel optimization problems with minimax lower-level problem, 
\begin{equation*}\label{bilevel_minimax-a}
	\begin{aligned}
		\min_{x \in X, y \in Y, z \in Z} F(x,y,z)  
		\quad
		\mathrm{s.t.} \quad  (y, z) \in \mathcal{SP}(x),
	\end{aligned}
\end{equation*}
where $\mathcal{SP}(x)$ denotes the set of saddle points for the convex-concave minimax problem,
\begin{equation*}\label{LL_minimax-a}
	\begin{aligned}
		\min_{y \in Y} \max_{z  \in Z}  f(x, y, z),
	\end{aligned}
\end{equation*}
where $x \in \mathbb{R}^n$, $y \in \mathbb{R}^{m}$ and $z \in \mathbb{R}^{p}$, the sets $X$, $Y$ and $Z$ are closed convex sets in $ \mathbb{R}^n$, $ \mathbb{R}^{m}$ and  $\mathbb{R}^{p}$, respectively. The UL objective $F:X\times Y \times Z \rightarrow\mathbb{R}$, and the LL objective $f:X\times Y \times Z\rightarrow\mathbb{R}$ are continuously differentiable with $f$ being convex in $y$ and concave in $z$.  
Building upon the idea applied in the development of the regularized gap function  \eqref{def_vg}, we introduce the doubly regularized gap function for  lower-level minimax problems, defined as:
\begin{equation*}\label{def_minimax-a}
	\begin{aligned}
		\mathcal{G}^{\mathrm{saddle}}_{\gamma}(x,y,z):= \max_{\theta \in Y, \lambda \in Z} \Big\{
		f(x,y,\lambda) - \frac{1}{2\gamma_2}\| \lambda - z\|^2 - f(x,\theta,z)- \frac{1}{2\gamma_1}\| \theta - y\|^2 \Big\}.
	\end{aligned}
\end{equation*}

By employing proof techniques analogous to those used in Lemma \ref{lem0} or Theorem 3.3 from \cite{von2009optimization}, we can derive similar results for the doubly regularized gap function  $\mathcal{G}^{\mathrm{saddle}}_{\gamma}(x,y,z)$.

\begin{lemma}\label{lem2}
	Assume that $f(x,y,z)$ is convex in $y$ on $Y$ for any given $x\in X, z \in Z$ and concave in $z$ on $Z$ for any given $x\in X, y \in Y$. Let $\gamma_1, \gamma_2 > 0$, we have $\mathcal{G}^{\mathrm{saddle}}_{\gamma}(x,y,z) \ge 0$  for any $(x,y,z) \in X \times Y \times Z$, and
	\[
	\mathcal{G}^{\mathrm{saddle}}_{\gamma}(x,y,z) \le 0, 
	\]
	if and only if $ (y,z) \in \mathcal{SP}(x)$. 	
\end{lemma}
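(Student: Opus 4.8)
The plan is to follow the proof of Lemma~\ref{lem0} essentially verbatim, exploiting the fact that the two proximal terms in \eqref{def_minimax} leave the maximizations over $\theta$ and $\lambda$ additively separable. First I would split the gap function into its two decoupled pieces,
\[
\mathcal{G}^{\mathrm{saddle}}_{\gamma}(x,y,z) = \max_{\lambda \in Z}\Big\{ f(x,y,\lambda) - \tfrac{1}{2\gamma_2}\|\lambda - z\|^2\Big\} - \min_{\theta \in Y}\Big\{ f(x,\theta,z) + \tfrac{1}{2\gamma_1}\|\theta - y\|^2\Big\},
\]
which is legitimate precisely because the $\lambda$-dependent and $\theta$-dependent parts of the integrand do not interact. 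Evaluating the inner maximum at the feasible choice $\lambda = z$ and the inner minimum at $\theta = y$ yields the sandwich
\[
\min_{\theta \in Y}\Big\{ f(x,\theta,z) + \tfrac{1}{2\gamma_1}\|\theta - y\|^2\Big\} \le f(x,y,z) \le \max_{\lambda \in Z}\Big\{ f(x,y,\lambda) - \tfrac{1}{2\gamma_2}\|\lambda - z\|^2\Big\},
\]
from which $\mathcal{G}^{\mathrm{saddle}}_{\gamma}(x,y,z) \ge 0$ for every $(x,y,z) \in X \times Y \times Z$ is immediate.

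For the characterization of the zero level set, I would observe that $\mathcal{G}^{\mathrm{saddle}}_{\gamma}(x,y,z) \le 0$ combined with the nonnegativity just proved forces both inequalities in the sandwich to hold with equality; equivalently, $y$ attains the inner minimum and $z$ attains the inner maximum. Since $f(x,\cdot,z)$ is convex and $f(x,y,\cdot)$ is concave, the two proximal objectives are respectively $1/\gamma_1$-strongly convex and $1/\gamma_2$-strongly concave, so these extremizers are unique.

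The step I expect to need the most care is stripping off the proximal regularizers to recover the unregularized saddle-point conditions. Writing the first-order optimality condition of the strongly convex proximal minimization at its optimizer $\theta = y$, the proximal gradient $\tfrac{1}{\gamma_1}(\theta - y)$ vanishes there, leaving $0 \in \nabla_y f(x,y,z) + \mathcal{N}_Y(y)$; by convexity of $f(x,\cdot,z)$ this is exactly the statement that $y \in \mathrm{argmin}_{\theta \in Y} f(x,\theta,z)$. The symmetric computation on the maximization side, using concavity of $f(x,y,\cdot)$, gives $z \in \mathrm{argmax}_{\lambda \in Z} f(x,y,\lambda)$.

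These two conditions are, by definition, equivalent to $(y,z)$ being a saddle point of $\min_{y\in Y}\max_{z\in Z} f(x,y,z)$, i.e. $(y,z) \in \mathcal{SP}(x)$; unlike the constrained setting of Lemma~\ref{lem0}, no appeal to the minimax theorem is required here, because the target characterization is literally the pair of optimality conditions we derived. For the converse I would simply reverse the reasoning: if $(y,z) \in \mathcal{SP}(x)$, then the two unregularized optimality conditions hold, and since the gradients of the proximal terms vanish at $\theta = y$ and $\lambda = z$, adjoining those terms preserves the conditions, so $y$ and $z$ remain the unique prox-extremizers. Consequently both sandwich inequalities become equalities and $\mathcal{G}^{\mathrm{saddle}}_{\gamma}(x,y,z) = 0 \le 0$, completing the equivalence.
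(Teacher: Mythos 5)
Your proof is correct and is essentially the argument the paper intends: the paper states that Lemma \ref{lem2} follows by the techniques of Lemma \ref{lem0}, namely the sandwich inequality obtained by evaluating the two decoupled prox-problems at $\theta=y$ and $\lambda=z$, the resulting equality characterization of the zero level set, and the removal of the proximal terms by convexity/concavity; your observation that the final minimax-theorem step of Lemma \ref{lem0} is unnecessary here, because $\mathcal{SP}(x)$ is by definition the set of saddle points, is also accurate. The only cosmetic caveat is that Lemma \ref{lem2} does not assume differentiability of $f$, so the optimality conditions should be phrased with subdifferentials --- or simply via the standard fact that for convex $h$ the point $y$ minimizes $h(\theta)+\tfrac{1}{2\gamma_1}\|\theta-y\|^2$ over $Y$ if and only if it minimizes $h$ over $Y$ --- exactly as the paper does implicitly in the proof of Lemma \ref{lem0}.
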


Similarly, utilizing the proof methods in Lemma \ref{smooth_G}, we establish the differentiability of $\mathcal{G}^{\mathrm{saddle}}_{\gamma}(x,y,z)$ and derive the formula for its gradient.

\begin{lemma}\label{lem5}
	Assume that $f(x,y,z)$ is convex in $y$ on $Y$ for any given $x\in X, z \in Z$ and concave in $z$ on $Z$ for any given $x\in X, y \in Y$ and is continuously differentiable on an open set containing $X \times Y \times Z$. Then $\mathcal{G}^{\mathrm{saddle}}_{\gamma}(x,y,z)$ is continuously differentiable on $X \times Y \times Z$, and for any $(x,y,z) \in X \times Y \times Z$,
	\begin{equation}\label{Ggradien1t}
		\nabla	\mathcal{G}^{\mathrm{saddle}}_{\gamma}(x,y,z) 
		=	\begin{pmatrix}
			\nabla_x f(x,y,\lambda^*)   \\
			\nabla_y f(x,y,\lambda^*)  \\
			-\left(z - \lambda^*\right)/\gamma_2
		\end{pmatrix}-
		\begin{pmatrix}
			\nabla_x f(x,\theta^*,z)  \\
			\left(y - \theta^*\right)/\gamma_1 \\
		\nabla_z f(x,\theta^*,z)
		\end{pmatrix},
	\end{equation}
	where $\theta^*$ and $\lambda^*$ denote $\theta^*(x,y,z)$ and $\lambda^*(x,y,z)$, respectively, defined as
	\begin{equation}\label{def_tl*a}
		\begin{aligned}
			\theta^*(x,y,z) &:= \underset{\theta \in Y}{\mathrm{argmin}}  \left\{ f(x, \theta,z) + \frac{1}{2\gamma_1}\| \theta - y\|^2  \right\}, \\
			\lambda^*(x,y,z) &:= \underset{\lambda \in Z}{\mathrm{argmax}} \left\{  f(x, y,\lambda)  - \frac{1}{2\gamma_2}\| \lambda - z\|^2 \right\}.
		\end{aligned}
	\end{equation}
\end{lemma}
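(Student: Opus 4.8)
The plan is to mirror the proof of Lemma \ref{smooth_G}, exploiting the fact that the inner maximization defining $\mathcal{G}^{\mathrm{saddle}}_{\gamma}$ is \emph{additively separable} in the two auxiliary variables $\theta$ and $\lambda$. The integrand $f(x,y,\lambda) - \tfrac{1}{2\gamma_2}\|\lambda-z\|^2 - f(x,\theta,z) - \tfrac{1}{2\gamma_1}\|\theta-y\|^2$ splits into a part depending only on $\lambda$ and a part depending only on $\theta$ (the latter entering with a minus sign, which turns its maximization into a minimization), so the joint maximum decouples as $\mathcal{G}^{\mathrm{saddle}}_{\gamma}(x,y,z) = \mathcal{G}^{\mathrm{saddle}}_{2,\gamma}(x,y,z) - \mathcal{G}^{\mathrm{saddle}}_{1,\gamma}(x,y,z)$, where I set
\[
\mathcal{G}^{\mathrm{saddle}}_{1,\gamma}(x,y,z) := \min_{\theta\in Y}\Big\{ f(x,\theta,z) + \tfrac{1}{2\gamma_1}\|\theta-y\|^2 \Big\}, \qquad \mathcal{G}^{\mathrm{saddle}}_{2,\gamma}(x,y,z) := \max_{\lambda\in Z}\Big\{ f(x,y,\lambda) - \tfrac{1}{2\gamma_2}\|\lambda-z\|^2 \Big\}.
\]
It then suffices to show each piece is continuously differentiable and to assemble the two gradients.

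First I would establish uniqueness of the inner optimizers. Since $f(x,\cdot,z)$ is convex, the $\theta$-objective is $\tfrac{1}{\gamma_1}$-strongly convex, so $\theta^*(x,y,z)$ in \eqref{def_tl*a} is a singleton; dually, since $f(x,y,\cdot)$ is concave, the $\lambda$-objective is $\tfrac{1}{\gamma_2}$-strongly concave, so $\lambda^*(x,y,z)$ is a singleton. These optimizers depend continuously on $(x,y,z)$ (by a Berge-type maximum-theorem argument, or the Lipschitz estimate of Lemma \ref{lem_theta} transcribed to this setting). Next I would apply the Danskin-type differentiability result (Theorem 4.13 and Remark 4.14 of \cite{bonnans2013}) separately to each piece. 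The envelope principle then yields, for the max part, $\nabla_x \mathcal{G}^{\mathrm{saddle}}_{2,\gamma} = \nabla_x f(x,y,\lambda^*)$, $\nabla_y \mathcal{G}^{\mathrm{saddle}}_{2,\gamma} = \nabla_y f(x,y,\lambda^*)$, and $\nabla_z \mathcal{G}^{\mathrm{saddle}}_{2,\gamma} = -(z-\lambda^*)/\gamma_2$ (differentiating only the explicit $z$-dependence through the penalty), and analogously for the min part $\nabla_x \mathcal{G}^{\mathrm{saddle}}_{1,\gamma} = \nabla_x f(x,\theta^*,z)$, $\nabla_y \mathcal{G}^{\mathrm{saddle}}_{1,\gamma} = (y-\theta^*)/\gamma_1$, and $\nabla_z \mathcal{G}^{\mathrm{saddle}}_{1,\gamma} = \nabla_z f(x,\theta^*,z)$. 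Subtracting the second gradient from the first reproduces the claimed formula \eqref{Ggradien1t}, and continuity of the gradient follows from continuity of $\theta^*$ and $\lambda^*$ together with continuity of $\nabla f$.

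The main obstacle is verifying the inf-/sup-compactness hypothesis of Theorem 4.13 of \cite{bonnans2013}, since $Y$ and $Z$ are only closed convex and need not be compact. For the $\theta$-problem this is handled exactly as in Lemma \ref{smooth_G}: the proximal term $\tfrac{1}{2\gamma_1}\|\theta-y\|^2$ makes the objective coercive, so its lower level sets are bounded and the inf-compactness condition holds on a neighborhood of any base point. For the $\lambda$-problem one must additionally invoke concavity of $f(x,y,\cdot)$, which majorizes it by an affine function, so that the quadratic penalty $-\tfrac{1}{2\gamma_2}\|\lambda-z\|^2$ dominates and drives the objective to $-\infty$; this furnishes the required sup-compactness for the maximization. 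Once these coercivity facts are in place, the remainder is a routine transcription of the argument for Lemma \ref{smooth_G}, with the only bookkeeping difference being that here both inner problems carry the variable $z$ (resp. $y$) explicitly, which is precisely what produces the $\nabla_z f(x,\theta^*,z)$ and $\nabla_y f(x,y,\lambda^*)$ blocks in \eqref{Ggradien1t}.
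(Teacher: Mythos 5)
Your proposal is correct and follows essentially the same route as the paper, which simply invokes the proof of Lemma \ref{smooth_G}: decompose $\mathcal{G}^{\mathrm{saddle}}_{\gamma}$ into the separable max and min pieces, use strong convexity/concavity of the proximally regularized inner problems to get unique, continuously varying optimizers, and apply the Danskin-type result of Bonnans--Shapiro to each piece before subtracting. Your explicit verification of sup-compactness for the $\lambda$-maximization (via concavity plus the quadratic penalty) is a detail the paper leaves implicit, but it does not change the argument.
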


This newly introduced gap function enables the following equivalent single-level reformulation of the problem \eqref{bilevel_minimax},
\begin{equation*}
	\min_{(x, y, z)\in X \times Y \times Z}  \  F(x,y,z) \quad
	\mathrm{s.t.} \quad  \mathcal{G}^{\mathrm{saddle}}_{\gamma}(x,y,z) \leq 0.
\end{equation*}

Using Lemma \ref{lem2} and the proof techniques in Theorem \ref{thm_reformulate}, we can establish the equivalence between the reformulation \eqref{wVP3} and the bilevel optimization problem \eqref{bilevel_minimax}.

\begin{theorem}\label{thm_reformulate-a}
	Assume that $f(x,y,z)$ is convex in $y$ on $Y$ for any given $x\in X, z \in Z$ and concave in $z$ on $Z$ for any given $x\in X, y \in Y$. Let $\gamma_1, \gamma_2 > 0$, the reformulation \eqref{wVP3} is equivalent to the bilevel optimization problem \eqref{bilevel_minimax}.
\end{theorem}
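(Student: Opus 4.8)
The plan is to obtain Theorem~\ref{thm_reformulate-a} as an immediate consequence of Lemma~\ref{lem2}, paralleling the argument for Theorem~\ref{thm_reformulate} but in a noticeably simpler form. The key observation is that the two problems \eqref{bilevel_minimax} and \eqref{wVP3} share the very same decision variables $(x,y,z)$, the same ambient set $X \times Y \times Z$, and the same objective $F(x,y,z)$. Consequently, the only thing that needs to be checked is that their remaining constraints cut out the \emph{same} feasible region; once this is established, the two problems coincide as optimization problems and are trivially equivalent, having identical optimal value and identical solution set. This is cleaner than the constrained setting of Theorem~\ref{thm_reformulate}, where the multiplier $z$ is an auxiliary variable whose existence had to be secured through the condition $\mathcal{M}(x,y) \neq \varnothing$. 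In the minimax setting $z$ is already one of the primal decision variables, so no analogous constraint qualification is required.

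First I would take any $(x,y,z)$ feasible for \eqref{wVP3}, that is, $(x,y,z) \in X \times Y \times Z$ with $\mathcal{G}^{\mathrm{saddle}}_{\gamma}(x,y,z) \le 0$. By Lemma~\ref{lem2}, this inequality is equivalent to $(y,z) \in \mathcal{SP}(x)$, so $(x,y,z)$ is feasible for \eqref{bilevel_minimax}. Conversely, I would take any $(x,y,z)$ feasible for \eqref{bilevel_minimax}, so that $(x,y,z) \in X \times Y \times Z$ and $(y,z) \in \mathcal{SP}(x)$; Lemma~\ref{lem2} then yields $\mathcal{G}^{\mathrm{saddle}}_{\gamma}(x,y,z) \le 0$, which is exactly feasibility for \eqref{wVP3}. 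Combining the two inclusions shows that the feasible sets of \eqref{bilevel_minimax} and \eqref{wVP3} are identical, which completes the proof. Note that this argument is insensitive to whether $\mathcal{SP}(x)$ is nonempty: if it is empty for every $x$, both problems are infeasible and the stated equivalence holds vacuously.

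The main obstacle does not lie in this theorem at all, but in Lemma~\ref{lem2}, which carries the entire saddle-point characterization. The genuine work there, as in Lemma~\ref{lem0}, is a weak-duality-type sandwich: evaluating the inner minimization at $\theta = y$ and the inner maximization at $\lambda = z$ gives $\mathcal{G}^{\mathrm{saddle}}_{\gamma}(x,y,z) \ge 0$, and the identity $\mathcal{G}^{\mathrm{saddle}}_{\gamma}(x,y,z) = 0$ forces $(y,z)$ to simultaneously solve both proximal subproblems in \eqref{def_tl*a}. By the strong convexity in $\theta$ and strong concavity in $\lambda$ of the regularized objectives, these conditions reduce to $(y,z)$ being a saddle point of $f(x,\cdot,\cdot)$, which via the classical minimax theorem is precisely $(y,z) \in \mathcal{SP}(x)$. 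Granting Lemma~\ref{lem2}, the present theorem is then a direct corollary, with no further technical difficulty.
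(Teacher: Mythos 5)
Your proof is correct and follows essentially the same route as the paper, which obtains the theorem by combining Lemma \ref{lem2} with the feasible-set identification argument used for Theorem \ref{thm_reformulate}. Your observation that no multiplier-existence hypothesis is needed here, because $z$ is already a primal decision variable rather than an auxiliary multiplier, matches the paper's statement, which indeed drops that condition.
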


Utilizing this reformulation,  analogous to BiC-GAFFA, we can propose a gradient-based, single-loop, Hessian-free algorithm for problem \eqref{bilevel_minimax}.
At each iteration, we employ a single projected gradient descent step to update $\theta^{k+1}, \lambda^{k+1}$ to approximate $\theta^*(x^k,y^k, z^k)$ and $ \lambda^*(x^k,y^k, z^k) $, as follows:
\begin{equation*}\label{update_theta-a}
	\begin{aligned}
			\theta^{k+1} &= \mathrm{Proj}_{Y}\left( \theta^k - \eta_k d_{\theta}^k  \right), \\
			\lambda^{k+1} &= \mathrm{Proj}_{Z}\left( \lambda^k - \eta_k d_{\lambda}^k  \right),
	\end{aligned}
\end{equation*}
where $\eta_k >0$ is the step size, and
\begin{equation}\label{def_d-a}
	\begin{aligned}
		d_{\theta}^k &:=  \nabla_y f(x^k, \theta^k, z^k) + \frac{1}{\gamma_1} ( \theta^k - y^k ), \\
		d_{\lambda}^k &:= - \nabla_z f(x^k, y^k, \lambda^k) + \frac{1}{\gamma_2} ( \lambda^k - z^k ).
	\end{aligned}
\end{equation}
By substituting $(\theta^{k+1}, \lambda^{k+1})$ for $(\theta^*, \lambda^*)$ in \eqref{Ggradien1t}, we can approximate the gradients of the function 
\[
\frac{1}{c}F(x,y,z)+ \mathcal{G}^{\mathrm{saddle}}_{\gamma}(x,y,z)
\]
to define the update directions:
\begin{equation}\label{def_dx-a}
	\begin{aligned}
		d_{x}^k := & \frac{1}{c_k} \nabla_x F(x^k,y^k,z^k) + \nabla_x f(x^k, y^k, \lambda^{k+1}) - \nabla_x f(x^k, \theta^{k+1}, z^{k}),  \\
		d_{y}^k := & \frac{1}{c_k} \nabla_y F(x^{k},y^k,z^k) + \nabla_y f(x^{k}, y^k, \lambda^{k+1}) - (y^k - \theta^{k+1})/\gamma_1, \\
		d_z^k := & \frac{1}{c_k} \nabla_y F(x^{k},y^k,z^k) - (z^k - \lambda^{k+1} )/ \gamma_2 - \nabla_z f(x^k,\theta^{k+1},z^k).
	\end{aligned}
\end{equation}
Finally, we implement an update for the variables $(x,y,z)$ using a step size $\alpha_k > 0$:
\begin{equation*}
	(x^{k+1}, y^{k+1}, z^{k+1}) = \mathrm{Proj}_{X \times Y \times Z} \left( (x^k, y^k, z^k) - \alpha_k (d_x^k, d_y^k, d_z^k) \right).
\end{equation*}
The complete algorithm is presented in Algorithm \ref{alg2}. 

\begin{algorithm}[hb]
	\caption{single-loop Hessian-free algorithm for bilevel optimization problems with minimax lower-level problem}
	\label{alg2}
	\begin{algorithmic}
		\STATE {\bfseries Input:} $(x^0,y^0, z^0) \in X \times Y \times Z$, $\theta^0 \in Y$, stepsizes $\alpha_k, \eta_k > 0$, proximal parameter $\gamma=(\gamma_1,\gamma_2)$, penalty parameter $c_k$;
		\FOR{$k=0$ {\bfseries to} $K-1$}
		\STATE $\bullet\ $ calculate $d_{\theta}^k $ and $d_{\lambda}^k $ as in \eqref{def_d-a};
		\STATE $\bullet\ $ update 
		\[
		\theta^{k+1} = \mathrm{Proj}_{Y}\left( \theta^k - \eta_k d_{\theta}^k  \right), \quad \lambda^{k+1} = \mathrm{Proj}_{Z}\left( \lambda^k - \eta_k d_{\lambda}^k  \right);
		\]
		\STATE $\bullet\ $ calculate $d_x^k, d_y^k, d_z^k$ as in \eqref{def_dx-a};
		\STATE $\bullet\ $ update 
		\[
		(x^{k+1}, y^{k+1}, z^{k+1}) = \mathrm{Proj}_{X \times Y \times Z} \left( (x^k, y^k, z^k) - \alpha_k (d_x^k, d_y^k, d_z^k) \right).
		\]
		\ENDFOR
	\end{algorithmic}
\end{algorithm}

While the primary focus of this paper is the bilevel optimization with constrained lower-level problems, we defer the convergence analysis of this algorithm to future work.


\end{document}